\definecolor{webgreen}{rgb}{0,.5,0}
\definecolor{webbrown}{rgb}{.6,0,0}
\newcommand{\seqnum}[1]{\href{http://oeis.org/#1}{\underline{#1}}}
\newcommand{\tableref}[1]{Table \ref{#1} (page \pageref{#1})} 
\newcommand{\subtablewidth}{\textwidth} 
\newcommand{\tabletopstrut}[0]{\rule{0pt}{3ex}} 
\newcommand{\tablebottomstrut}[0]{\rule{0pt}{3ex}} 
\newcommand{\tagonce}[0]{
     \addtocounter{equation}{1}
     \tag{\theequation}
} 
\newcommand{\tagtext}[1]{\tag{\footnotesize\underline{\emph{#1}}}} 
\newcommand{\itemlabel}[1]{\textbf{#1}: \\ } 
\renewcommand{\labelenumi}{$\mathsmaller{\blacktriangleright}$ } 
\newcommand{\sublabel}[1]{\noindent{\smaller{\underline{\textbf{\textit{#1}}}.\ }} \\ } 
\newcommand{\sublabelII}[1]{\noindent{\smaller{\underline{\textbf{\textit{#1}}}.\ }}\smallskip} 
\newcommand{\quotetext}[1]{``#1''} 
\newcommand{\cf}[0]{cf.\ } 
\newcommand{\ie}[0]{i.e.,\ } 
\newcommand{\OEISII}[1]{{\texttt{#1}}} 
\newcommand{\seqmapsto}[2][]{
     \xrightarrow[\text{ \OEISII{#1} }]{\text{ \OEISII{#2} }}\quad
}  
\newcommand{\defequals}{\ensuremath{\vcentcolon=}} 
\newcommand{\defmapsto}{\ensuremath{\vcentcolon\mapsto}} 
\newcommand{\undersetbrace}[2]{\ensuremath{\underset{\mathlarger{#1}}{\mathsmaller{\underbrace{#2}}}}}
\newcommand{\TheSummaryNBFile}{ 
     \href{\TheSummaryNBFileGoogleDriveLink}{\texttt{multifact-cfracs-summary.nb}}
}
\newcommand{\Mm}[0]{\emph{Mathematica}} 
\newcommand{\gkpSI}[2]{\ensuremath{\genfrac{\lbrack}{\rbrack}{0pt}{}{#1}{#2}}} 
\newcommand{\gkpSII}[2]{\ensuremath{\genfrac{\{}{\}}{0pt}{}{#1}{#2}}} 
\newcommand{\gkpEI}[2]{\ensuremath{\genfrac{\langle}{\rangle}{0pt}{}{#1}{#2}}} 
\newcommand{\gkpEII}[2]{\ensuremath{\left\langle\genfrac{\langle}{\rangle}{
            0pt}{}{#1}{#2}\right\rangle}} 
\newcommand{\FcfII}[3]{\ensuremath{\gkpSI{#2}{#3}_{#1}}} 
\newcommand{\FFact}[3]{\ensuremath{(#1|#2)^{\underline{#3}}}} 
\newcommand{\FFactII}[2]{\ensuremath{#1^{\underline{#2}}}} 
\newcommand{\RFact}[3]{\ensuremath{(#1|#2)^{\overline{#3}}}} 
\newcommand{\RFactII}[2]{\ensuremath{#1^{\overline{#2}}}} 
\newcommand{\Pochhammer}[2]{\ensuremath{\left(#1\right)_{#2}}} 
\newcommand{\Iverson}[1]{\ensuremath{\left[#1\right]_{\delta}}} 
\newcommand{\MultiFactorial}[2]{\ensuremath{#1!_{\left(#2\right)}}} 
\newcommand{\AlphaFactorial}[2]{\ensuremath{\left(#1\right)!_{\left(#2\right)}}} 
\newcommand{\pn}[3]{\ensuremath{p_{#1}\left(#2, #3\right)}} 
\newcommand{\HypU}[3]{\ensuremath{U\left(#1, #2, #3\right)}} 
\newcommand{\HypM}[3]{\ensuremath{M\left(#1, #2, #3\right)}} 
\newcommand{\ConvGF}[4]{\ensuremath{\Conv_{#1}\left(#2, #3; #4\right)}} 
\newcommand{\ConvFP}[4]{\ensuremath{\FP_{#1}\left(#2, #3; #4\right)}} 
\newcommand{\ConvFQ}[4]{\ensuremath{\FQ_{#1}\left(#2, #3; #4\right)}} 
\newcommand{\FiGF}[1]{\ensuremath{F_{#1}}} 
\newcommand{\HNumGFFactoredDenomFn}[3]{
     \Denom_{#1, \bmod{#2}}\left\llbracket #3 \right\rrbracket 
} 
\def\?{\hbox{!`}} 
\DeclareMathOperator{\FP}{FP} 
\DeclareMathOperator{\FQ}{FQ} 
\DeclareMathOperator{\Conv}{Conv} 
\DeclareMathOperator{\as}{as} 
\DeclareMathOperator{\bs}{bs} 
\DeclareMathOperator{\cs}{cs} 
\DeclareMathOperator{\af}{af} 
\DeclareMathOperator{\bfcf}{bf} 
\DeclareMathOperator{\cfcf}{cf} 
\DeclareMathOperator{\Sf}{sf} 
\DeclareMathOperator{\E}{E} 
\DeclareMathOperator{\Denom}{Denom} 
\begin{document}

%\begin{center}
%\epsfxsize=4in
%\leavevmode\epsffile{logo129.eps}
%\end{center}

\theoremstyle{plain}
\newtheorem{theorem}{Theorem}
\newtheorem{lemma}[theorem]{Lemma}
\newtheorem{prop}[theorem]{Proposition}
\newtheorem{cor}[theorem]{Corollary}

\theoremstyle{definition}
\newtheorem{definition}[theorem]{Definition}
\newtheorem{example}[theorem]{Example}

\theoremstyle{remark}
\newtheorem{remark}[theorem]{Remark}

\begin{center}
\vskip 1cm{\LARGE\bf 
Jacobi-Type Continued Fractions for the \\
\vskip .02in
Ordinary Generating Functions of  \\
\vskip .10in
Generalized Factorial Functions
}
\vskip 1cm
\large
Maxie D. Schmidt \\ 
University of Washington \\ 
Department of Mathematics \\ 
Padelford Hall \\ 
Seattle, WA 98195 \\ 
USA \\ 
\href{mailto:maxieds@gmail.com}{\nolinkurl{maxieds@gmail.com}} 
\end{center}

\vskip .2 in
\begin{abstract} 
The article studies a class of generalized factorial functions 
and symbolic product sequences 
through Jacobi-type continued fractions (J-fractions) that 
formally enumerate the typically 
divergent ordinary generating functions of these sequences. 
The rational convergents of these generalized J-fractions 
provide formal power series approximations to the 
ordinary generating functions that enumerate many specific classes of 
factorial-related integer product sequences. 
The article also provides applications to a number of specific factorial 
sum and product identities, 
new integer congruence relations satisfied by 
generalized factorial-related product sequences, the Stirling numbers of the 
first kind, and the $r$-order harmonic numbers, as well as 
new generating functions for the sequences of binomials, $m^p-1$, 
among several other notable motivating examples given as 
applications of the new results proved in the article. 
\end{abstract} 

\vskip 0.2in 

\section{Notation and other conventions in the article} 
\label{Section_Notation_and_Convs} 

\subsection{Notation and special sequences} 

Most of the conventions in the article are consistent with the 
notation employed within the \emph{Concrete Mathematics} reference, and 
the conventions defined in the introduction to the first article 
\cite{MULTIFACTJIS}. 
These conventions 
include the following particular notational variants: 
\begin{enumerate} 
     \renewcommand{\labelenumi}{$\mathsmaller{\blacktriangleright}$ } 
     \newcommand{\localitemlabel}[1]{\textbf{#1}.\ } 

\item \localitemlabel{Extraction of formal power series coefficients} 
The special notation for formal 
power series coefficient extraction, 
$[z^n] \left( \sum_{k} f_k z^k \right) \defmapsto f_n$; 

\item \localitemlabel{Iverson's convention} 
The more compact usage of Iverson's convention, 
$\Iverson{i = j} \equiv \delta_{i,j}$, in place of 
Kronecker's delta function where 
$\Iverson{n = k = 0} \equiv \delta_{n,0} \delta_{k,0}$; 

\item \localitemlabel{Bracket notation for the Stirling and 
                      Eulerian number triangles} 
We use the alternate bracket notation for the Stirling number triangles, 
$\gkpSI{n}{k} = (-1)^{n-k} s(n, k)$ and 
$\gkpSII{n}{k} = S(n, k)$, as well as 
$\gkpEI{n}{m}$ to denote the first-order Eulerian number triangle, and 
$\gkpEII{n}{m}$ to denote the second-order Eulerian numbers; 

\item \localitemlabel{Harmonic number sequences} 
Use of the notation for the first-order harmonic numbers, $H_n$ or 
$H_n^{(1)}$, which defines the sequence 
\[
H_n \defequals 1+\frac{1}{2}+\frac{1}{3}+\cdots+\frac{1}{n}, 
\] 
and the notation for the partial sums for the more general cases of the 
$r$-order harmonic numbers, $H_n^{(r)}$, defined as 
\[ 
\mathsmaller{H_n^{(r)}} \defequals 1 + 2^{-r} + 3^{-r} + \cdots + n^{-r}, 
\]
when $r, n \geq 1$ are integer-valued and where $H_n^{(r)} \equiv 0$ for all 
$n \leq 0$; 

\item \localitemlabel{Rising and falling factorial functions} 
We use the convention of denoting the 
falling factorial function by $\FFactII{x}{n} = x! / (x-n)!$, the 
rising factorial function as $\RFactII{x}{n} = \Gamma(x+n) / \Gamma(x)$, 
or equivalently by the Pochhammer symbol, 
$\Pochhammer{x}{n} = x(x+1)(x+2) \cdots (x+n-1)$; 

\item \localitemlabel{Shorthand notation in integer congruences and modular arithmetic} 
Within the article the notation 
$g_1(n) \equiv g_2(n) \pmod{N_1, N_2, \ldots, N_k}$ is understood to 
mean that the congruence, $g_1(n) \equiv g_2(n) \pmod{N_j}$, holds 
modulo any of the bases, $N_j$, for $1 \leq j \leq k$. 

\end{enumerate} 
Within the article, the standard set notation for 
$\mathbb{Z}$, $\mathbb{Q}$, and $\mathbb{R}$ 
denote the sets of integers, rational numbers, and real numbers, respectively, 
where the set of natural numbers, $\mathbb{N}$, is defined by 
$\mathbb{N} \defequals \{0, 1, 2, \ldots \} = \mathbb{Z}^{+} \bigcup \{0\}$. 
Other more standard notation for the special functions 
cited within the article is consistent with the definitions 
employed within the \emph{NIST Handbook} reference. 

\subsection{Mathematica summary notebook document and 
            computational reference information} 
\label{subSection_MmSummaryNBInfo} 

The article is prepared with a more extensive set of 
computational data and software routines 
released as open source software to accompany the examples and 
numerous other applications suggested as topics for future 
research and investigation within the article. 
It is highly encouraged, and expected, that the 
interested reader obtain a copy of the summary notebook reference and 
computational documentation prepared in this format to 
assist with computations in a multitude of special case examples cited as 
particular applications of the new results. 

The prepared summary notebook file, \TheSummaryNBFile, 
attached to the submission of this manuscript\footnote{ 
     An official copy of the original summary notebook is also 
     linked \href{https://cs.uwaterloo.ca/journals/JIS/VOL20/Schmidt/schmidt14.nb}{here} 
     on the \emph{Journal of Integer Sequences} website. 
}
contains the working \Mm{} code to verify the formulas, 
propositions, and other identities cited within the article 
\cite{SUMMARYNBREF-STUB}. 
Given the length of the article, 
the \Mm{} summary notebook included with this submission 
is intended to help the reader with verifying and 
modifying the examples presented as 
applications of the new results cited below. 
The summary notebook also contains numerical data 
corresponding to computations of multiple examples and 
congruences specifically referenced in several places by the 
applications and tables given in the next sections of the article. 

\section{Introduction} 
\label{Section_project_overview} 
\label{Section_Intro} 

The primary focus of the new results established by this article is to 
enumerate new properties of the 
generalized symbolic product sequences, $\pn{n}{\alpha}{R}$ defined by 
\eqref{eqn_GenFact_product_form}, 
which are generated by the convergents to 
\emph{Jacobi-type continued fractions} (\emph{J-fractions}) 
that represent formal power series expansions of the 
otherwise divergent ordinary generating functions (OGFs) 
for these sequences. 
\begin{align} 
\label{eqn_GenFact_product_form} 
\pn{n}{\alpha}{R} & \defequals 
     \prod\limits_{0 \leq j < n} (R+\alpha j) \Iverson{n \geq 1} + 
     \Iverson{n = 0} \\ 
\notag 
     & \phantom{:} = 
     R (R + \alpha) (R + 2\alpha) \times \cdots \times (R + (n-1)\alpha) 
     \Iverson{n \geq 1} + \Iverson{n = 0}. 
\end{align} 
The related integer-valued cases of the multiple factorial sequences, 
or \emph{$\alpha$-factorial} functions, $\MultiFactorial{n}{\alpha}$, 
of interest in the applications of 
this article are defined recursively for any fixed 
$\alpha \in \mathbb{Z}^{+}$ and $n \in \mathbb{N}$ by the 
following equation \cite[\S 2]{MULTIFACTJIS}: 
\begin{equation} 
\label{eqn_nAlpha_Multifact_variant_rdef} 
\MultiFactorial{n}{\alpha} = 
     \begin{cases} 
     n \cdot (n-\alpha)!_{(\alpha)}, & \text{ if $n > 0$; } \\ 
     1, & \text{ if $-\alpha < n \leq 0$; } \\ 
     0, & \text{ otherwise. } 
     \end{cases} 
\end{equation} 
The particular new results studied within the article generalize the 
known series proved in the references \cite{FLAJOLET80B,FLAJOLET82}, 
including expansions of the series for generating functions enumerating the 
\emph{rising} and \emph{falling} \emph{factorial} functions, 
$\RFactII{x}{n} = (-1)^{n} \FFactII{(-x)}{n}$ and 
$\FFactII{x}{n} = x! / (x-n)! = \pn{n}{-1}{x}$, and the 
\emph{Pochhammer symbol}, $\Pochhammer{x}{n} = \pn{n}{1}{x}$, 
expanded by the 
\emph{Stirling numbers of the first kind}, $\gkpSI{n}{k}$, as 
(\seqnum{A130534}):
\begin{align*} 
\Pochhammer{x}{n} 
     & = 
     x(x+1)(x+2) \cdots (x+n-1) \Iverson{n \geq 1} + \Iverson{n = 0} \\ 
     & = 
     \left(\sum_{k=1}^{n} \gkpSI{n}{k} x^{k}\right) \times \Iverson{n \geq 1} 
     + \Iverson{n = 0}. 
\end{align*} 
The generalized rising and falling factorial functions 
denote the products, 
$\RFact{x}{\alpha}{n} = \Pochhammer{x}{n,\alpha}$ and 
$\FFact{x}{\alpha}{n} = \Pochhammer{x}{n,-\alpha}$, 
defined in the reference, where the products, 
$\Pochhammer{x}{n} = \RFact{x}{1}{n}$ and 
$\FFactII{x}{n} = \FFact{x}{1}{n}$, 
correspond to the particular special cases of these functions cited above 
\cite[\S 2]{MULTIFACTJIS}. 
Roman's \emph{Umbral Calculus} book employs the 
alternate, and less standard, notation of 
$\left(\frac{x}{a}\right)_n \defequals 
 \frac{x}{a} \left(\frac{x}{a}-1\right) \times \cdots \times 
 \left(\frac{x}{a}-n+1\right)$ to denote the 
sequences of \emph{lower factorial polynomials}, and 
$x^{(n)}$ in place of the Pochhammer symbol 
to denote the \emph{rising factorial polynomials} 
\cite[\S 1.2]{UC}. We do not use this convention within the article. 

The generalized product sequences in \eqref{eqn_GenFact_product_form} 
also correspond to the definition of the \emph{Pochhammer $\alpha$-symbol}, 
$\Pochhammer{x}{n,\alpha} = \pn{n}{\alpha}{x}$, 
defined as in 
(\cite{ON-HYPGEOMFNS-PHKSYMBOL}, \cite[Examples]{CVLPOLYS} and
\cite[\cf \S 5.4]{GKP})
for any fixed $\alpha \neq 0$ and non-zero indeterminate, 
$x \in \mathbb{C}$, by the following analogous expansions involving the 
generalized $\alpha$-factorial coefficient triangles defined in 
\eqref{eqn_Fa_rdef} of the next section of this article 
\cite[\S 3]{MULTIFACTJIS}: 
\begin{align*} 
\Pochhammer{x}{n,\alpha} 
     & = 
     x(x+\alpha)(x+2\alpha) \cdots (x+ (n-1) \alpha) \Iverson{n \geq 1} + 
     \Iverson{n = 0} \\ 
     & = 
     \left(\sum_{k=1}^{n} \gkpSI{n}{k} \alpha^{n-k} x^{k}\right) \times 
     \Iverson{n \geq 1} + \Iverson{n = 0} \\ 
     & = 
     \left(\sum_{k=0}^{n} \FcfII{\alpha}{n+1}{k+1} (x-1)^{k}\right) \times 
     \Iverson{n \geq 1} + \Iverson{n = 0}. 
\end{align*} 
We are especially interested in using the new results established in this 
article to formally generate the 
factorial-function-like product sequences, 
$\pn{n}{\alpha}{R}$ and $\pn{n}{\alpha}{\beta n + \gamma}$, for some fixed 
parameters $\alpha, \beta, \gamma \in \mathbb{Q}$, when the 
initially fixed symbolic indeterminate, $R$, depends linearly on $n$. 
The particular forms of the 
generalized product sequences of interest in the 
applications of this article are related to the \emph{Gould polynomials}, 
$G_n(x; a, b) = \frac{x}{x-an} \cdot \FFactII{\left(\frac{x-an}{b}\right)}{n}$, 
in the form of the following equation 
(\cite[\S 3.4.2]{MULTIFACTJIS},\cite[\S 4.1.4]{UC}): 
\begin{align} 
\label{eqn_pnAlphaBetanpGamma_GouldPolyExp_Ident-stmt_v1} 
\pn{n}{\alpha}{\beta n + \gamma} & = 
     \frac{(-\alpha)^{n+1}}{\gamma-\alpha-\beta} \times 
     G_{n+1}\left(\gamma-\alpha-\beta; -\beta, -\alpha\right). 
\end{align} 
Whereas the first results proved in the articles 
\cite{FLAJOLET80B,FLAJOLET82} 
are focused on establishing properties of divergent forms of the 
ordinary generating functions for a number of special sequence cases 
through more combinatorial interpretations of these 
continued fraction series, the emphasis in this article 
is more enumerative in flavor. 
The new identities involving the integer-valued cases of the 
multiple, $\alpha$-factorial functions, $\MultiFactorial{n}{\alpha}$, 
defined in \eqref{eqn_nAlpha_Multifact_variant_rdef} 
obtained by this article 
extend the study of these sequences motivated by the 
distinct symbolic polynomial expansions of these functions 
originally considered in the reference \cite{MULTIFACTJIS}. 
This article extends a number of the examples considered as 
applications of the results from the 2010 article \cite{MULTIFACTJIS} 
briefly summarized in the next subsection. 

\subsection{Polynomial expansions of generalized $\alpha$-factorial functions} 
\label{subSection_GenAlphaFactorialTriangle_exps} 

For any fixed integer $\alpha \geq 1$ and $n, k \in \mathbb{N}$, the 
coefficients defined by the triangular recurrence relation in 
\eqref{eqn_Fa_rdef} provide one approach to enumerating the 
symbolic polynomial expansions of the 
generalized factorial function product sequences 
defined as special cases of 
\eqref{eqn_GenFact_product_form} and 
\eqref{eqn_nAlpha_Multifact_variant_rdef}. 
\begin{equation} 
\label{eqn_Fa_rdef} 
\FcfII{\alpha}{n}{k} = (\alpha n+1-2\alpha)\FcfII{\alpha}{n-1}{k} + 
     \FcfII{\alpha}{n-1}{k-1} + \Iverson{n = k = 0} 
\end{equation} 
The combinatorial interpretations of these coefficients as generalized 
Stirling numbers of the first kind 
motivated in the reference \cite{MULTIFACTJIS} leads to polynomial 
expansions in $n$ of the multiple factorial function sequence variants in 
\eqref{eqn_nAlpha_Multifact_variant_rdef} that generalize the 
known formulas for the 
single and double factorial functions, $n!$ and $n!!$, 
involving the unsigned Stirling numbers of the first kind, 
$\gkpSI{n}{k} = \FcfII{1}{n}{k} = (-1)^{n-k} s(n, k)$, which are 
expanded in the forms of the following equations 
(\cite[\S 6]{GKP}, \cite[\S 26.8]{NISTHB},
\seqnum{A000142}, \seqnum{A006882}): 
\label{eqn_AlphaNm1pd_AlphaFactFn_PolyCoeffSum_Exp_formula-v1} 
\begin{align*} 
n! & = \sum_{m=0}^{n} \gkpSI{n}{m} (-1)^{n-m} n^m,\ \forall n \geq 1 \\ 
n!! & = \sum_{m=0}^{n} \gkpSI{\lfloor \frac{n+1}{2} \rfloor}{m} 
     (-2)^{\lfloor \frac{n+1}{2} \rfloor - m} n^{m},\ \forall n \geq 1 \\ 
\tagonce\label{eqn_AlphaNm1pd_AlphaFactFn_PolyCoeffSum_Exp_formula-eqns_v3} 
\MultiFactorial{n}{\alpha} & = 
     \sum_{m=0}^{n} 
     \gkpSI{\lceil n / \alpha \rceil}{m} 
     (-\alpha)^{\lceil \frac{n}{\alpha} \rceil - m} n^{m},\ 
     \forall n \geq 1, \alpha \in \mathbb{Z}^{+}. 
\end{align*} 
The Stirling numbers of the first kind similarly provide 
non-polynomial exact 
finite sum formulas for the single and double factorial 
functions in the following forms for $n \geq 1$ 
where $(2n)!! = 2^{n} \times n!$ 
(\cite[\S 6.1]{GKP},\cite[\S 5.3]{DBLFACTFN-COMBIDENTS-SURVEY}): 
\begin{align*} 
n! & = 
\mathsmaller{ 
     \sum\limits_{k=0}^{n} \gkpSI{n}{k} 
}  
\quad \text{ and } \quad 
(2n-1)!! = 
\mathsmaller{ 
     \sum\limits_{k=0}^{n} \gkpSI{n}{k} 2^{n-k} 
}. 
\end{align*} 
Related finite sums generating the 
single factorial, double factorial, and 
$\alpha$-factorial functions are expanded respectively through the 
\emph{first-order Euler numbers}, $\gkpEI{n}{m}$, as 
$n! = \sum_{k=0}^{n-1} \gkpEI{n}{k}$ 
\cite[\S 26.14(iii)]{NISTHB}, 
through the \emph{second-order Euler numbers}, $\gkpEII{n}{m}$, as 
$(2n-1)!! = \sum_{k=0}^{n-1} \gkpEII{n}{k}$ \cite[\S 6.2]{GKP}, and 
by the generalized cases of these triangles \cite[\S 6.2.4]{MULTIFACTJIS}. 

The polynomial expansions of the first two classical sequences 
of the previous equations in 
\eqref{eqn_AlphaNm1pd_AlphaFactFn_PolyCoeffSum_Exp_formula-eqns_v3} 
are generalized to the 
more general $\alpha$-factorial function cases 
through the triangles defined as in \eqref{eqn_Fa_rdef} 
from the reference \cite{MULTIFACTJIS} through the 
next explicit finite sum formulas when $n \geq 1$. 
\begin{align*} 
\tagonce\label{eqn_AlphaNm1pd_AlphaFactFn_PolyCoeffSum_Exp_formula-eqns_v1} 
\MultiFactorial{n}{\alpha} & = 
     \sum_{m=0}^{n} 
     \FcfII{\alpha}{\lfloor \frac{n-1+\alpha}{\alpha} \rfloor + 1}{m+1} 
     (-1)^{\lfloor \frac{n-1+\alpha}{\alpha} \rfloor - m} 
     (n+1)^{m},\ 
     \forall n \geq 1, \alpha \in \mathbb{Z}^{+} 
\end{align*} 
The polynomial expansions in $n$ of the generalized 
$\alpha$-factorial functions, $\AlphaFactorial{\alpha n-d}{\alpha}$, 
for fixed $\alpha \in \mathbb{Z}^{+}$ and integers $0 \leq d < \alpha$, 
are obtained similarly from 
\eqref{eqn_AlphaNm1pd_AlphaFactFn_PolyCoeffSum_Exp_formula-eqns_v1} 
through the generalized coefficients in 
\eqref{eqn_Fa_rdef} as follows \cite[\cf \S 2]{MULTIFACTJIS}: 
\begin{align*} 
\tagonce\label{eqn_AlphaNm1pd_AlphaFactFn_PolyCoeffSum_Exp_formula-eqns_v2} 
(\alpha n-d)!_{(\alpha)} 
     & = 
     (\alpha - d) \times 
     \sum_{m=1}^{n} \FcfII{\alpha}{n}{m} (-1)^{n-m} 
     (\alpha n + 1 - d)^{m-1} \\ 
     & = 
     \phantom{(\alpha - d) \times} 
     \sum_{m=0}^{n} \FcfII{\alpha}{n+1}{m+1} (-1)^{n-m} 
     (\alpha n + 1 - d)^{m},\ 
     \forall n \geq 1, \alpha \in \mathbb{Z}^{+}, 0 \leq d < \alpha. 
\end{align*} 
A binomial-coefficient-themed phrasing of the products underlying the 
expansions of the more general factorial function sequences of this type 
(each formed by dividing through by a normalizing factor of $n!$) 
is suggested in the following expansions of these coefficients by 
the Pochhammer symbol \cite[\S 5]{GKP}: 
\begin{align} 
\label{eqn_BinomCoeff_AlphaNm1pd_AlphaFactFn_Exp_formula-v2} 
\binom{\frac{s-1}{\alpha}}{n} & = 
     \frac{(-1)^{n}}{n!} \cdot \Pochhammer{\frac{s-1}{\alpha}}{n} = 
     \frac{1}{\alpha^{n} \cdot n!} \prod_{j=0}^{n-1} \left( 
     s - 1 -\alpha j 
     \right). 
\end{align} 
When the initially fixed indeterminate $s \defequals s_n$ 
is considered modulo $\alpha$ in the form of 
$s_n \defequals \alpha n + d$ for some fixed least integer residue, 
$0 \leq d < \alpha$, the 
prescribed setting of this offset $d$ completely determines the 
numerical $\alpha$-factorial function sequences of the forms in 
\eqref{eqn_AlphaNm1pd_AlphaFactFn_PolyCoeffSum_Exp_formula-eqns_v2} 
generated by these products 
(see, for example, the examples cited below in 
Section \ref{subsubSection_Intro_Examples_Fact-RelatedSeqs_GenByTheConvFns} 
and the tables given in the reference 
\cite[\cf \S 6.1.2, Table 6.1]{MULTIFACTJIS}). 

For any lower index $n \geq 1$, the 
binomial coefficient formulation of the multiple factorial function 
products in 
\eqref{eqn_BinomCoeff_AlphaNm1pd_AlphaFactFn_Exp_formula-v2} 
provides the next several expansions by the 
exponential generating functions 
for the generalized coefficient triangles in \eqref{eqn_Fa_rdef}, and 
their corresponding generalized 
Stirling polynomial analogs, $\sigma_k^{(\alpha)}(x)$, 
defined in the references 
(\cite[\S 5]{MULTIFACTJIS},\cite[\cf \S 6, \S 7.4]{GKP}): 
\begin{align} 
\label{eqn_binom_FaSPoly_exp_ident} 
\binom{\frac{s-1}{\alpha}}{n} & = 
     \sum_{m=0}^{n} 
     \FcfII{\alpha}{n+1}{n+1-m} \frac{(-1)^{m} s^{n-m}}{\alpha^{n} n!} \\ 
\notag 
     & = 
     \sum_{m=0}^{n} 
     \frac{(-1)^{m} \cdot (n+1) \sigma_m^{(\alpha)}(n+1)}{\alpha^{m}} \times 
     \frac{\left(s / \alpha\right)^{n-m}}{(n-m)!} \\ 
\label{eqn_fx_EGF_result} 
\binom{\frac{s-1}{\alpha}}{n} & = 
     [z^n] \left( 
     e^{(s-1+\alpha) z / \alpha} \left( 
     \frac{-z e^{-z}}{e^{-z} - 1} 
     \right)^{n+1} 
     \right) \\ 
\notag 
     & = 
     [z^n w^n] \left( 
     - \frac{z \cdot e^{(s-1+\alpha) z / \alpha}}{1+wz-e^{z}} 
     \right). 
\end{align} 
The generalized forms of the \emph{Stirling convolution polynomials}, 
$\sigma_n(x) \equiv \sigma_n^{(1)}(x)$, and the 
\emph{$\alpha$-factorial polynomials}, $\sigma_n^{(\alpha)}(x)$, 
studied in the reference are defined for each $n \geq 0$ through the 
triangle in \eqref{eqn_Fa_rdef} as follows 
\cite[\S 5.2]{MULTIFACTJIS}: 
\begin{align*} 
\tagtext{Generalized Stirling Polynomials} 
x \cdot \sigma_n^{(\alpha)}(x) & \defequals 
     \FcfII{\alpha}{x}{x-n} \frac{(x-n-1)!}{(x-1)!} \\ 
     & \phantom{:} = 
     [z^n] \left( 
     e^{(1-\alpha) z} \left(\frac{\alpha z e^{\alpha z}}{e^{\alpha z}-1} 
     \right)^{x} \right). 
\end{align*} 
A more extensive treatment of the properties and generating function 
relations satisfied by the triangular coefficients defined by 
\eqref{eqn_Fa_rdef}, including their similarities to the 
Stirling number triangles, Stirling convolution polynomial sequences, and the 
generalized Bernoulli polynomials, among relations to several other notable 
special sequences, is provided in the references. 

\subsection{Divergent ordinary generating functions 
            approximated by the convergents to 
            infinite Jacobi-type and Stieltjes-type 
            continued fraction expansions} 
\label{subSection_Intro_Examples_DivergentCFracOGFs} 

\subsubsection{Infinite J-fraction expansions generating the 
               rising factorial function} 

Another approach to enumerating the symbolic expansions of the 
generalized $\alpha$-factorial function 
sequences outlined above is constructed 
as a new generalization of the continued fraction series 
representations of the ordinary generating function for the 
rising factorial function, or Pochhammer symbol, 
$\Pochhammer{x}{n} = \Gamma(x+n) / \Gamma(x)$, 
first proved by Flajolet \cite{FLAJOLET80B,FLAJOLET82}. 
For any fixed non-zero indeterminate, $x \in \mathbb{C}$, the 
ordinary power series enumerating the rising factorial 
sequence is defined through the 
next infinite Jacobi-type J-fraction 
expansion \cite[\S 2, p.\ 148]{FLAJOLET80B}: 
\begin{equation} 
\label{eqn_PochhammerSymbol_InfCFrac_series_Rep_example-v1} 
R_0(x, z): = 
     \sum_{n \geq 0} (x)_n z^n = 
     \cfrac{1}{1-xz-\cfrac{1 \cdot x z^2}{1-(x+2)z-
     \cfrac{2(x+1)z^2}{1-(x+4) z - \cfrac{3(x+2) z^2}{\cdots}.}}} 
\end{equation} 
Since we know symbolic polynomial expansions of the functions, $(x)_n$, 
through the Stirling numbers of the first kind, 
we notice that the terms in a convergent 
power series defined by 
\eqref{eqn_PochhammerSymbol_InfCFrac_series_Rep_example-v1} 
correspond to the normalized coefficients of the 
following well-known two-variable 
\quotetext{\emph{double}}, or \quotetext{\emph{super}}, 
exponential generating functions (EGFs) 
for the Stirling number triangle 
when $x$ is taken to be a fixed, formal parameter 
with respect to these series 
(\cite[\S 7.4]{GKP},\cite[\S 26.8(ii)]{NISTHB},
\cite[\cf Prop.\ 9]{FLAJOLET80B}): 
\begin{align*} 
\sum_{n \geq 0} \Pochhammer{x}{n} \frac{z^n}{n!} & = \frac{1}{(1-z)^{x}} 
     \qquad \text{ and } \qquad 
\sum_{n \geq 0} \FFactII{x}{n} \cdot \frac{z^{n}}{n!} = (1+z)^{x}. 
\end{align*} 
For natural numbers $m \geq 1$ and fixed $\alpha \in \mathbb{Z}^{+}$, the 
coefficients defined by the generalized triangles in 
\eqref{eqn_Fa_rdef} are enumerated similarly by the 
generating functions \cite[\cf \S 3.3]{MULTIFACTJIS} 
\begin{align*} 
\tagonce\label{eqn_FcfIIAlphanp1mp1_two-variable_EGFwz-footnote_v1} 
\sum_{m,n \geq 0} \FcfII{\alpha}{n+1}{m+1} \frac{x^m z^n}{n!} & = 
     (1- \alpha z)^{-(x+1) / \alpha}. 
\end{align*} 
When $x$ depends linearly on $n$, the ordinary generating 
functions for the numerical factorial functions formed by 
$(x)_n$ do not converge for $z \neq 0$. 
However, the convergents of the continued fraction representations of 
these series still lead to partial, truncated series approximations 
generating these generalized product sequences, 
which in turn immediately satisfy a number of 
combinatorial properties, recurrence relations, and other 
established integer congruence properties implied by the 
rational convergents to the first continued fraction expansion given in 
\eqref{eqn_PochhammerSymbol_InfCFrac_series_Rep_example-v1}. 

\subsubsection{Examples} 

Two particular divergent ordinary generating functions 
for the single factorial function sequences, 
$f_1(n) \defequals n!$ and $f_2(n) \defequals (n+1)!$, 
are cited in the references as examples of the 
Jacobi-type J-fraction results proved in 
Flajolet's articles 
(\cite{FLAJOLET80B,FLAJOLET82},\cite[\cf \S 5.5]{GFLECT}). 
The next pair of series expansions serve to illustrate the utility to 
enumerating each sequence formally with respect to $z$ 
required by the results in this article 
\cite[Thm.\ 3A; Thm.\ 3B]{FLAJOLET80B}. 
\begin{align*} 
\tagtext{Single Factorial J-Fractions} 
F_{1,\infty}(z) & \defequals 
     \sum_{n \geq 0} n! \cdot z^n && = 
     \cfrac{1}{1-z-\cfrac{1^2 \cdot z^2}{ 
     1-3z-\cfrac{2^2 z^2}{\cdots}}} \\ 
F_{2,\infty}(z) & \defequals 
     \sum_{n \geq 0} (n+1)! \cdot z^n && = 
     \cfrac{1}{1-2z-\cfrac{1 \cdot 2 z^2}{ 
     1-4z-\cfrac{2 \cdot 3 z^2}{\cdots}}} 
\end{align*} 
In each of these respective formal power series expansions, 
we immediately see that for each finite $h \geq 1$, the 
$h^{th}$ convergent functions, denoted $F_{i,h}(z)$ for $i = 1,2$, 
satisfy $f_i(n) = [z^n] F_{i,h}(z)$ whenever $1 \leq n < 2h$. 
We also have that 
$f_i(n) \equiv [z^n] F_{i,h}(z) \pmod{p}$ 
for any $n \geq 0$ whenever $p$ is a divisor of $h$ 
(\cite{FLAJOLET82},\cite[\cf \S 5]{GFLECT}). 

Similar expansions of other factorial-related 
continued fraction series are given in the references 
(\cite{FLAJOLET80B},\cite[\cf \S 5.9]{GFLECT}). 
For example, the next known 
\emph{Stieltjes-type} continued fractions (\emph{S-fractions}), 
formally generating the double factorial function, 
$(2n-1)!!$, and the \emph{Catalan numbers}, 
$C_n$, respectively, 
are expanded through the convergents of the following 
infinite continued fractions 
(\cite[Prop.\ 5; Thm.\ 2]{FLAJOLET80B}, \cite[\S 5.5]{GFLECT},
\seqnum{A001147}, \seqnum{A000108}): 
\begin{align*} 
\tagtext{Double Factorial S-Fractions} 
F_{3,\infty}(z) & \defequals 
     \sum_{n \geq 0} \undersetbrace{(2n-1)!!}{1 \cdot 3 \cdots (2n-1)} 
     \times z^{2n} && = 
     \cfrac{1}{
     1-\cfrac{1 \cdot z^2}{ 
     1-\cfrac{2 \cdot z^2}{
     1-\cfrac{3 \cdot z^2}{\cdots}}}} \\ 
F_{4,\infty}(z) & \defequals 
     \sum_{n \geq 0} \undersetbrace{C_n = \binom{2n}{n} \frac{1}{(n+1)}}{
     \frac{2^{n} (2n-1)!!}{(n+1)!}} 
     \times z^{2n} && = 
     \cfrac{1}{
     1-\cfrac{z^2}{ 
     1-\cfrac{z^2}{
     1-\cfrac{z^2}{\cdots}.}}} 
\end{align*} 
For comparison, some related forms of 
regularized ordinary power series in $z$ generating the 
single and double factorial function sequences from the 
previous examples are stated in terms of the 
\emph{incomplete gamma function}, 
$\Gamma(a, z) = \int_z^{\infty} t^{a-1} e^{-t} dt$, 
as follows \cite[\S 8]{NISTHB}: 
\begin{align*} 
\sum_{n \geq 0} n! \cdot z^{n} & = 
     -\frac{e^{-1/z}}{z} \times 
     \Gamma\left(0, -\frac{1}{z}\right) \\ 
\sum_{n \geq 0} (n+1)! \cdot z^{n} & = 
     -\frac{e^{-1/z}}{z^2} \times 
     \Gamma\left(-1, -\frac{1}{z}\right) \\ 
\tagonce\label{eqn_RelatedFormsOf_SgAndDblFact_OGFs-intro_examples-stmts_v1} 
\sum_{n \geq 1} (2n-1)!! \cdot z^{n} & = 
     -\frac{e^{-1 / 2 z}}{(-2z)^{3/2}} \times 
     \Gamma\left(-\frac{1}{2}, -\frac{1}{2 z}\right). 
\end{align*} 
Since $\pn{n}{\alpha}{R} = \alpha^{n} \Pochhammer{R / \alpha}{n}$, the 
exponential generating function for the 
generalized product sequences corresponds to the series 
(\cite[\cf \S 7.4, eq.\ (7.55)]{GKP},\cite{CVLPOLYS}):
\begin{align*} 
\tagtext{Generalized Product Sequence EGFs} 
\widehat{P}(\alpha, R; z) & \defequals 
     \sum_{n=0}^{\infty} \pn{n}{\alpha}{R} \frac{z^n}{n!} = 
     (1-\alpha z)^{-R / \alpha}, 
\end{align*} 
where for each fixed $\alpha \in \mathbb{Z}^{+}$ and $0 \leq r < \alpha$, 
we have the identities, 
$\AlphaFactorial{\alpha n-r}{\alpha} = \pn{n}{\alpha}{\alpha - r} = 
 \alpha^{n} \Pochhammer{1 - \frac{r}{\alpha}}{n}$. 
The form of this exponential generating function 
then leads to the next forms of the regularized sums by applying a 
\emph{Laplace transform} to the generating functions in the 
previous equation (see Remark \ref{remark_Formal_Laplace-Borel_Transforms}) 
(\cite[\S B.14]{ACOMB-BOOK},\cite[\cf \S 8.6(i)]{NISTHB}). 
\begin{align*} 
\widetilde{B}_{\alpha,-r}(z) & \defequals 
\sum_{n \geq 0} \AlphaFactorial{\alpha n - r}{\alpha} z^n \\ 
     & \phantom{:} = 
\int_0^{\infty} \frac{e^{-t}}{(1-\alpha tz)^{1 - r / \alpha}} dt \\ 
     & \phantom{:} = 
\frac{e^{-\frac{1}{\alpha z}}}{(-\alpha z)^{1 - r / \alpha}} \times 
     \Gamma\left(\frac{r}{\alpha}, -\frac{1}{\alpha z}\right) 
\end{align*} 
The remarks given in 
Section \ref{subSection_AltExps_of_the_GenConvFns} 
suggest similar approximations to the 
$\alpha$-factorial functions generated by the 
generalized convergent functions defined in the next section, and their 
relations to the confluent hypergeometric functions and the 
associated Laguerre polynomial sequences 
(\cite[\cf \S 18.5(ii)]{NISTHB}, \cite[\S 4.3.1]{UC}). 

\subsection{Generalized convergent functions generating 
            factorial-related integer product sequences} 
\label{subSection_Intro_GenConvFn_Defs_and_Properties} 

\subsubsection{Definitions of the generalized J-fraction expansions and 
               the generalized convergent function series} 

We state the next definition as a 
generalization of the result for the 
rising factorial function due to Flajolet cited in 
\eqref{eqn_PochhammerSymbol_InfCFrac_series_Rep_example-v1} 
to form the analogous series enumerating the 
multiple, or $\alpha$-factorial, product 
sequence cases defined by \eqref{eqn_GenFact_product_form} and 
\eqref{eqn_nAlpha_Multifact_variant_rdef}. 

\begin{definition}[Generalized J-Fraction Convergent Functions] 
\label{def_GenConvFns_PFact_Phz_eqn_QFact_Qhz-defs_intro_v1} 
Suppose that the parameters 
$h \in \mathbb{N}$, $\alpha \in \mathbb{Z}^{+}$ and $R \defequals R(n)$ 
are defined in the notation of the product-wise sequences from 
\eqref{eqn_GenFact_product_form}. 
For $h \geq 0$ and $z \in \mathbb{C}$, we let the component 
numerator and denominator convergent functions, denoted  
$\FP_h(\alpha, R; z)$ and $\FQ_h(\alpha, R; z)$, respectively, 
be defined by the recurrence relations in the next equations.
\begin{align} 
\label{eqn_PFact_Phz} 
\FP_h(\alpha, R; z) & \defequals 
     \begin{cases} 
     \mathsmaller{(1-(R+2\alpha(h-1))z)\FP_{h-1}(\alpha, R; z)- 
     \alpha(R+\alpha(h-2))(h-1) z^2 \FP_{h-2}(\alpha, R; z)} & 
     \text{if $h \geq 2$; } \\ 
     1, & \text{if $h = 1$; } \\ 
     0, & \text{otherwise. } 
     \end{cases} \\ 
\label{eqn_QFact_Qhz} 
\FQ_h(\alpha, R; z) & \defequals 
     \begin{cases} 
     \mathsmaller{(1-(R+2\alpha(h-1))z)\FQ_{h-1}(\alpha, R; z)- 
     \alpha(R+\alpha(h-2))(h-1) z^2 \FQ_{h-2}(\alpha, R; z)} & 
     \text{if $h \geq 2$; } \\ 
     1-Rz, & \text{if $h = 1$; } \\ 
     1, & \text{if $h = 0$; } \\ 
     0, & \text{otherwise. } 
     \end{cases} 
\end{align} 
The corresponding convergent functions, 
$\ConvGF{h}{\alpha}{R}{z}$, defined in the 
next equation then provide the 
rational, formal power series approximations in $z$ to the 
divergent ordinary generating functions of many factorial-related sequences 
formed as special cases of the symbolic products in 
\eqref{eqn_GenFact_product_form}. 
\begin{align} 
\notag 
\ConvGF{h}{\alpha}{R}{z} & := 
     \cfrac{1}{1 - R \cdot z - 
     \cfrac{\alpha R \cdot z^2}{ 
            1 - (R+2\alpha) \cdot z -
     \cfrac{2\alpha (R + \alpha) \cdot z^2}{ 
            1 - (R + 4\alpha) \cdot z - 
     \cfrac{3\alpha (R + 2\alpha) \cdot z^2}{ 
     \cfrac{\cdots}{1 - (R + 2 (h-1) \alpha) \cdot z}}}}} \\ 
\label{eqn_ConvGF_notation_def} 
     & \phantom{:} = 
     \frac{\FP_h(\alpha, R; z)}{\FQ_h(\alpha, R; z)} = 
     \sum_{n=0}^{2h-1} p_n(\alpha, R) z^n + 
     \sum_{n=2h}^{\infty} \widetilde{e}_{h,n}(\alpha, R) z^n 
\end{align} 
The first series coefficients on the right-hand-side of 
\eqref{eqn_ConvGF_notation_def} generate the products, 
$p_n(\alpha, R)$, from \eqref{eqn_GenFact_product_form}, where the 
remaining forms of the power series coefficients, 
$\widetilde{e}_{h,n}(\alpha, R)$, 
correspond to \quotetext{error terms} in the 
truncated formal series approximations to the 
exact sequence generating functions 
obtained from these convergent functions, which are defined such that 
$\pn{n}{\alpha}{R} \equiv \widetilde{e}_{h,n}(\alpha, R) \pmod{h}$ 
for all $h \geq 2$ and $n \geq 2h$. 
\end{definition} 

\subsubsection{Properties of the generalized J-fraction convergent functions} 

A number of the immediate, noteworthy properties satisfied by these 
generalized convergent functions 
are apparent from inspection of the first few special cases provided in 
\tableref{table_SpCase_Listings_Of_PhzQhz_ConvFn} and in 
\tableref{table_RelfectedConvNumPolySeqs_sp_cases}. 
The most important of these properties relevant 
to the new interpretations of the 
$\alpha$-factorial function sequences proved in the next sections of the 
article are briefly summarized in the points stated below. 

\begin{enumerate} 

\item \itemlabel{Rationality of the convergent functions in $\alpha$, $R$, and $z$} 
For any fixed $h \geq 1$, it is easy to show that the 
component convergent functions, $\FP_h(z)$ and $\FQ_h(z)$, 
defined by \eqref{eqn_PFact_Phz} and \eqref{eqn_QFact_Qhz}, respectively, 
are polynomials of finite degree in each of $z$, $R$, and $\alpha$ 
satisfying 
\begin{equation} 
\notag 
\deg_{z,R,\alpha}\bigl\{ \FP_h(\alpha, R; z) \bigr\} = h-1 
     \quad \text{ and } \quad 
\deg_{z,R,\alpha}\bigl\{ \FQ_h(\alpha, R; z) \bigr\} = h. 
\end{equation} 
For any $h, n \in \mathbb{Z}^{+}$, if $R \defequals R(n)$ 
denotes some linear function of $n$, the product sequences, 
$p_n(\alpha, R)$, 
generated by the generalized convergent functions 
always correspond to polynomials in $n$ (in $R$) 
of predictably finite degree with integer coefficients determined by the 
choice of $n \geq 1$. 

\item \itemlabel{Expansions of the denominator convergent functions by 
                 special functions} 
For all $h \geq 0$, and fixed non-zero parameters $\alpha$ and $R$, the 
power series in $z$ generated by the generalized $h^{th}$ 
convergents, $\ConvGF{h}{\alpha}{R}{z}$, are characterized by the 
representations of the convergent denominator functions, 
$\FQ_h(\alpha, R; z)$, through the 
confluent hypergeometric functions, 
$\HypU{a}{b}{w}$ and $\HypM{a}{b}{w}$, and the 
associated Laguerre polynomial sequences, $L_n^{(\beta)}(x)$, 
as follows (see Section \ref{subSection_AltExps_of_the_GenConvFns}) 
(\cite[\S 13; \S 18]{NISTHB}, \cite[\S 4.3.1]{UC}): 
\begin{align*} 
\tagonce\label{eqn_PFact_Qhz_Exp_idents-stmts_v1} 
\undersetbrace{\widetilde{\FQ}_h(\alpha, R; z)}{ 
     z^{h} \cdot \FQ_h\left(\alpha, R; z^{-1}\right) 
} & = 
     \alpha^{h} \times \HypU{-h}{\frac{R}{\alpha}}{\frac{z}{\alpha}} \\ 
     & = 
     (-\alpha)^{h} \Pochhammer{R / \alpha}{h} \times 
     \HypM{-h}{\frac{R}{\alpha}}{\frac{z}{\alpha}} \\ 
     & = 
     (-\alpha)^{h} \cdot h! \times 
     L_h^{(R / \alpha - 1)}\left(\frac{z}{\alpha}\right). 
\end{align*} 
The special function expansions of the reflected convergent denominator 
function sequences above lead to the statements of 
addition theorems, multiplication theorems, and 
several additional auxiliary recurrence relations for these functions 
proved in Section \ref{subsubSection_Properties_Of_ConvFn_Qhz}. 

\item \itemlabel{Corollaries: 
       New exact formulas and congruence properties for the 
       $\alpha$-factorial functions and the generalized product sequences} 
If some ordering of the $h$ zeros of 
\eqref{eqn_PFact_Qhz_Exp_idents-stmts_v1} is fixed at each $h \geq 1$, 
we can define the next sequences 
which form special cases of the zeros studied in the references 
\cite{LGWORKS-ASYMP-SPFNZEROS2008,PROPS-ZEROS-CHYPFNS80}. 
In particular, each of the following special zero sequence definitions 
given as ordered sets, or ordered lists of zeros, 
provide factorizations over $z$ of the denominator sequences, 
$\FQ_h(\alpha, R; z)$, parameterized by $\alpha$ and $R$ 
\cite[\cf \S 13.9, \S 18.16]{NISTHB}: 
\begin{align} 
\tagtext{Special Function Zeros} 
\left( \ell_{h,j}(\alpha, R) \right)_{j=1}^{h} & \defequals 
     \left\{ z_j : 
     \alpha^{h} \times \HypU{-h}{R / \alpha}{\frac{z}{\alpha}} = 0,\ 
     1 \leq j \leq h 
     \right\} \\ 
\notag 
     & \phantom{:} = 
     \left\{ z_j : 
     \alpha^{h} \times L_h^{(R / \alpha - 1)}\left(\frac{z}{\alpha}\right) = 0,\ 
     1 \leq j \leq h 
     \right\}. 
\end{align} 
Let the sequences, $c_{h,j}(\alpha, R)$, denote a shorthand 
for the coefficients corresponding to an 
expansion of the generalized convergent functions, 
$\ConvGF{h}{\alpha}{R}{z}$, by 
partial fractions in $z$, \ie the coefficients 
defined so that \cite[\S 1.2(iii)]{NISTHB} 
\begin{align*} 
\ConvGF{h}{\alpha}{R}{z} & \defequals 
     \sum_{j=1}^{h} \frac{c_{h,j}(\alpha, R)}{ 
     (1-\ell_{h,j}(\alpha, R) \cdot z)}. 
\end{align*} 
For $n \geq 1$ and any fixed integer $\alpha \neq 0$, these 
rational convergent functions 
provide the following formulas exactly generating the 
respective sequence cases in \eqref{eqn_GenFact_product_form} and 
\eqref{eqn_nAlpha_Multifact_variant_rdef}: 
\begin{align} 
\label{eqn_AlphaFactFn_Exact_PartialFracsRep_v1} 
p_n(\alpha, R) & = 
     \sum_{j=1}^{n} c_{n,j}(\alpha, R) \times 
     \ell_{n,j}(\alpha, R)^{n} \\ 
\notag 
n!_{(\alpha)} & = 
     \sum_{j=1}^{n} c_{n,j}(-\alpha, n) \times 
     \ell_{n,j}(-\alpha, n)^{\lfloor \frac{n-1}{\alpha} \rfloor}. 
\end{align} 
The corresponding congruences satisfied by each of these 
generalized sequence cases obtained from the $h^{th}$ 
convergent function expansions in $z$ are stated similarly 
modulo any prescribed integers $h \geq 2$ and fixed $\alpha \geq 1$ 
in the next forms. 
\begin{align} 
\label{eqn_AlphaFactFn_Exact_PartialFracsRep_v2} 
p_n(\alpha, R) & \equiv 
     \sum_{j=1}^{h} c_{h,j}(\alpha, R) \times 
     \ell_{h,j}(\alpha, R)^{n} 
     && \pmod{h} \\ 
\notag 
n!_{(\alpha)} & \equiv 
     \sum_{j=1}^{h} c_{h,j}(-\alpha, n) \times 
     \ell_{h,j}(-\alpha, n)^{\lfloor \frac{n-1}{\alpha} \rfloor} 
     && \pmod{h, h\alpha, \cdots, h\alpha^{h}} 
\end{align} 
Section \ref{subsubSection_Examples_NewCongruences} and 
Section \ref{subSection_NewCongruence_Relations_Modulo_Integer_Bases} 
provide several particular special case examples of the new 
congruence properties expanded by 
\eqref{eqn_AlphaFactFn_Exact_PartialFracsRep_v2}. 

\end{enumerate} 

\section{New results proved within the article} 
\label{subSection_Intro_Examples} 

\subsection{A summary of the new results and 
            outline of the article topics} 

\subsubsection*{J-Fractions for generalized factorial function sequences 
                (Section \ref{Section_Proofs_of_the_GenCFracReps} on page 
                 \pageref{Section_Proofs_of_the_GenCFracReps})} 

The article contains a number of new results and new examples of 
applications of the results from 
Section \ref{Section_Proofs_of_the_GenCFracReps} in the next subsections. 
The Jacobi-type continued fraction expansions formally enumerating the 
generalized factorial functions, $p_n(\alpha, R)$, proved in 
Section \ref{Section_Proofs_of_the_GenCFracReps} are new, and 
moreover, follow easily from the known 
continued fractions for the series generating the 
\emph{rising factorial function}, $(x)_n = p_n(1, x)$, established by 
Flajolet \cite{FLAJOLET80B}. 

\subsubsection*{Properties of the generalized convergent functions 
                (Section \ref{Section_Props_Of_CFracExps_OfThe_GenFactFnSeries} 
                 on page 
                 \pageref{Section_Props_Of_CFracExps_OfThe_GenFactFnSeries})} 

In Section \ref{Section_Props_Of_CFracExps_OfThe_GenFactFnSeries} 
we give proofs of new properties, expansions, recurrence relations, and 
exact closed-form representations by special functions 
satisfied by the convergent numerator and denominator subsequences, 
$\FP_h(\alpha, R; z)$ and $\FQ_h(\alpha, R; z)$. 
The consideration of the convergent function approximations to these 
infinite continued fraction expansions is a new topic not previously 
explored in the references which leads to new integer congruence results 
for factorial functions as well as new applications to 
generating function identities enumerating factorial-related 
integer sequences. 

\subsubsection*{Applications and motivating examples 
                (Section \ref{Section_Apps_and_Examples} on page 
                 \pageref{Section_Apps_and_Examples})} 

Specific consequences of the convergent function properties we prove in 
Section \ref{Section_Props_Of_CFracExps_OfThe_GenFactFnSeries} 
of the article include new congruences for and new rational 
generating function representations enumerating the Stirling numbers of the 
first kind, $\gkpSI{n}{k}$, modulo fixed integers $m \geq 2$ and the 
scaled $r$-order harmonic numbers, $n!^{r} \cdot H_n^{(r)}$, modulo $m$, 
which are easily extended to formulate analogous congruence results for the 
$\alpha$-factorial functions, $n!_{(\alpha)}$, and the generalized 
factorial product functions, $p_n(\alpha, R)$. 
These particular special cases of the new congruence results are proved in 
Section \ref{subSection_Congruences_for_Series_ModuloIntegers_p}. 
Section \ref{subSection_Congruences_for_Series_ModuloIntegers_p} also 
contains proofs of new representations of exact formulas for 
factorial functions expanded by the special zeros of the 
\emph{Laguerre polynomials} and \emph{confluent hypergeometric} functions. 

The subsections of Section \ref{Section_Apps_and_Examples} also 
provide specific sequence examples and new sequence generating function 
identities that demonstrate the utility and breadth of new applications 
implied by the convergent-based rational and hybrid-rational 
generating functions we rigorously treat first in 
Section \ref{Section_Proofs_of_the_GenCFracReps} and 
Section \ref{Section_Props_Of_CFracExps_OfThe_GenFactFnSeries}. 
For example, in Section \ref{subSection_DiagonalGFSequences_Apps}, 
we are the first to notice 
several specific integer sequence applications of new 
convergent-function-based \emph{Hadamard product} identities that 
effectively provide truncated series approximations to the formal 
\emph{Laplace-Borel transformation} where multiples of the 
\emph{rational} convergents, $\Conv_n(\alpha, R; z)$, generate the 
sequence multiplier, $n!$, in place of more standard integral 
representations of the transformation. 
In Section \ref{subsubSection_Apps_ArithmeticProgs_of_the_SgFactFns} through 
Section \ref{subsubSection_Apps_Example_SumsOfPowers_Seqs}, 
we focus on expanding particular examples of convergent-function-based 
generating functions enumerating special factorial-related integer 
sequences and combinatorial identities. 
The next few subsections of the article provide several special case 
examples of the new applications, new congruences, and other 
examples of the new results established in 
Section \ref{Section_Apps_and_Examples}. 

\subsection{Examples of factorial-related finite product sequences 
            enumerated by the 
            generalized convergent functions} 
\label{subsubSection_Intro_Examples_Fact-RelatedSeqs_GenByTheConvFns} 
\label{prop_Conv-Based_Defs_for_FactFn_Variants} 
\label{cor_NumericalMultiFactSeqsEnum_Alpha1234} 

\subsubsection{Generating functions for arithmetic progressions of the 
               $\alpha$-factorial functions} 

There are a couple of noteworthy subtleties that arise in defining the specific 
numerical forms of the $\alpha$-factorial function sequences 
defined by 
\eqref{eqn_nAlpha_Multifact_variant_rdef} and 
\eqref{eqn_AlphaNm1pd_AlphaFactFn_PolyCoeffSum_Exp_formula-eqns_v1}. 
First, since the generalized convergent functions generate the distinct 
symbolic products that characterize the forms of these expansions, 
we see that the following convergent-based enumerations of the 
multiple factorial sequence variants hold at each 
$\alpha, n \in \mathbb{Z}^{+}$, and 
some fixed choice of the prescribed offset, $0 \leq d < \alpha$: 
\begin{align*} 
\tagonce\label{eqn_MultFactFn_ConvSeq_def_v1} 
\left(\alpha n-d\right)!_{(\alpha)} & = 
     \undersetbrace{\pn{n}{-\alpha}{\alpha n-d}}{ 
     \left(-\alpha\right)^{n} \cdot \Pochhammer{\frac{d}{\alpha} - n}{n} 
     } = 
     [z^n] \ConvGF{n}{-\alpha}{\alpha n-d}{z} \\ 
     & = 
     \undersetbrace{\pn{n}{\alpha}{\alpha-d}}{ 
     \alpha^{n} \cdot \Pochhammer{1 - \frac{d}{\alpha}}{n} 
     } 
     \phantom{- n} = 
     [z^n] \ConvGF{n}{\alpha}{\alpha - d}{z}. 
\end{align*} 
For example, 
some variants of the arithmetic progression sequences formed by the 
single factorial and double factorial functions, $n!$ and $n!!$, in 
Section \ref{subsubSection_Apps_ArithmeticProgs_of_the_SgFactFns} 
are generated by the particular shifted inputs to these functions 
highlighted by the special cases in the next equations 
(\seqnum{A000142}, \seqnum{A000165}, \seqnum{A001147}): 
\begin{align*} 
\left( n! \right)_{n=1}^{\infty} & = 
     \left( \Pochhammer{1}{n} \right)_{n=1}^{\infty} 
     && \seqmapsto{A000142} 
     \left(1, 2, 6, 24, 120, 720, 5040, \ldots \right) \\ 
\left( (2n)!! \right)_{n=1}^{\infty} & = 
     \left( 2^{n} \cdot \Pochhammer{1}{n} \right)_{n=1}^{\infty} 
     && \seqmapsto{A001147}  
     \left(2, 8, 48, 384, 3840, 46080, \ldots \right) \\ 
\left( (2n-1)!! \right)_{n=1}^{\infty} & = 
     \left( 2^{n} \cdot \Pochhammer{1/2}{n} \right)_{n=1}^{\infty} 
     && \seqmapsto{A000165} 
     \left(1, 3, 15, 105, 945, 10395, \ldots \right). 
\end{align*} 
The next few special case variants of the $\alpha$-factorial function 
sequences corresponding to $\alpha \defequals 3, 4$, also expanded in 
Section \ref{subsubSection_Apps_ArithmeticProgs_of_the_SgFactFns}, 
are given in the following sequence forms 
(\seqnum{A032031}, \seqnum{A008544}, \seqnum{A007559},
\seqnum{A047053}, \seqnum{A007696}): 
\begin{align*} 
\left( (3n)!!! \right)_{n=1}^{\infty} & = 
     \left( 3^{n} \cdot \Pochhammer{1}{n} \right)_{n=1}^{\infty} 
     && \seqmapsto{A032031} 
     \left(3, 18, 162, 1944, 29160, \ldots \right) \\ 
\left( (3n-1)!!! \right)_{n=1}^{\infty} & = 
     \left( 3^{n} \cdot \Pochhammer{2/3}{n} \right)_{n=1}^{\infty} 
     && \seqmapsto{A008544} 
     \left(2, 10, 80, 880, 12320, 209440, \ldots \right) \\ 
\left( (3n-2)!!! \right)_{n=1}^{\infty} & = 
     \left( 3^{n} \cdot \Pochhammer{1/3}{n} \right)_{n=1}^{\infty} 
     && \seqmapsto{A007559} 
     \left(1, 4, 28, 280, 3640, 58240, \ldots \right) \\ 
\left( \AlphaFactorial{4n}{4} \right)_{n=0}^{\infty} & = 
     \left( 4^{n} \cdot \Pochhammer{1}{n} \right)_{n=0}^{\infty} 
     && \seqmapsto{A047053} 
     \left(1, 4, 32, 384, 6144, 122880, \ldots \right) \\ 
\left( \AlphaFactorial{4n+1}{4} \right)_{n=0}^{\infty} & = 
     \left( 4^{n} \cdot \Pochhammer{5/4}{n} \right)_{n=0}^{\infty} 
     && \seqmapsto{A007696} 
     \left(1, 5, 45, 585, 9945, 208845, \ldots \right). 
\end{align*} 
For each $n \in \mathbb{N}$ and prescribed constants 
$r, c \in \mathbb{Z}$ defined such that $c \mid n+r$, we also obtain 
rational convergent-based generating functions enumerating the modified 
single factorial function sequences given by 
\begin{align} 
\label{eqn_PPlusROverAlpha_FactFn_Conv_Ident-stmt_v2} 
\mathlarger{\mathsmaller{\left(\frac{n+r}{c}\right)\mathlarger{!}}} & = 
     [z^{n}] 
     \ConvGF{h}{-c}{n+r}{\frac{z}{c}} + 
     \Iverson{\frac{r}{c} = 0} \Iverson{n = 0}, 
     \text{ $\forall$ $h \geq \lfloor (n+r) / c \rfloor$}. 
\end{align} 

\subsubsection{Generating functions for 
               multi-valued integer product sequences} 

Likewise, 
given any $n \geq 1$ and fixed $\alpha \in \mathbb{Z}^{+}$, 
we can enumerate the somewhat less obvious full forms of the 
generalized $\alpha$-factorial function sequences defined piecewise for the 
distinct residues, $n \in \{0,1,\ldots, \alpha -1\}$, modulo $\alpha$ by 
\eqref{eqn_nAlpha_Multifact_variant_rdef} and in 
\eqref{eqn_MultFactFn_ConvSeq_def_v1}. 
The multi-valued products defined by 
\eqref{eqn_GenFact_product_form} for these functions are 
generated as follows: 
\begin{align*} 
\tagonce\label{eqn_MultFactFn_ConvSeq_def_v2} 
n!_{(\alpha)} & = 
     \left[z^{\lfloor (n+\alpha-1) / \alpha \rfloor}\right] 
     \ConvGF{n}{-\alpha}{n}{z} \\ 
     & = 
     [z^{n}] \left( 
     \sum_{0 \leq d < \alpha} z^{-d} \cdot 
     \ConvGF{n}{\alpha}{\alpha - d}{z^{\alpha}} 
     \right) \\ 
\tagonce\label{eqn_MultFactFn_ConvSeq_def_v3} 
    & = 
     [z^{n+\alpha-1}] \left( 
     \frac{1-z^{\alpha}}{1-z} \times 
     \ConvGF{n}{-\alpha}{n}{z^{\alpha}} 
     \right). 
\end{align*} 
The complete sequences over the multi-valued symbolic products 
formed by the special cases of the 
double factorial function, the \emph{triple factorial} function, $n!!!$, the 
\emph{quadruple factorial} function, 
$n!!!! = \MultiFactorial{n}{4}$, the 
\emph{quintuple factorial} (\emph{$5$-factorial}) function, 
$\MultiFactorial{n}{5}$, and the 
\emph{$6$-factorial} function, $\MultiFactorial{n}{6}$, 
respectively, are generated by the convergent generating function 
approximations expanded in the next equations 
(\seqnum{A006882}, \seqnum{A007661}, \seqnum{A007662}, \seqnum{A085157},
\seqnum{A085158}). 
\begin{align*} 
\left( n!! \right)_{n=1}^{\infty} & = \left( 
     \left[z^{\lfloor (n+1) / 2 \rfloor}\right] 
     \ConvGF{n}{-2}{n}{z} 
     \right)_{n=1}^{\infty} && 
     \seqmapsto{A006882} 
     \left(1, 2, 3, 8, 15, 48, 105, 384, \ldots\right) \\ 
\left( n!!! \right)_{n=1}^{\infty} & = \left( 
     \left[z^{\lfloor (n+2) / 3 \rfloor}\right] 
     \ConvGF{n}{-3}{n}{z} 
     \right)_{n=1}^{\infty} && 
     \seqmapsto{A007661} 
     \left(1, 2, 3, 4, 10, 18, 28, 80, 162, \ldots\right) \\ 
\left( \MultiFactorial{n}{4} \right)_{n=1}^{\infty} & = 
     \left( 
     \left[z^{\lfloor (n+3) / 4 \rfloor}\right] 
     \ConvGF{n}{-4}{n}{z} 
     \right)_{n=1}^{\infty} && 
     \seqmapsto{A007662} 
     \left(1, 2, 3, 4, 5, 12, 21, 32, 45, \ldots\right) \\ 
\left( \MultiFactorial{n}{5} \right)_{n=1}^{\infty} & = 
     \left( 
     \left[z^{\lfloor (n+4) / 5 \rfloor}\right] 
     \ConvGF{n}{-5}{n}{z} 
     \right)_{n=1}^{\infty} && 
     \seqmapsto{A085157} 
     \left(1, 2, 3, 4, 5, 6, 14, 24, 36, 50, \ldots\right) \\ 
\left( \MultiFactorial{n}{6} \right)_{n=1}^{\infty} & = 
     \left( 
     \left[z^{\lfloor (n+5) / 6 \rfloor}\right] 
     \ConvGF{n}{-6}{n}{z} 
     \right)_{n=1}^{\infty} && 
     \seqmapsto{A085158} 
     \left(1, 2, 3, 4, 5, 6, 7, 16, 27, 40, \ldots\right) 
\end{align*} 

\subsubsection{Examples of new convergent-based generating function identities 
               for binomial coefficients} 

The rationality in $z$ of the generalized convergent functions, 
$\ConvGF{h}{\alpha}{R}{z}$, for all $h \geq 1$ also provides 
several of the new forms of generating function identities for many 
factorial-related product sequences and 
related expansions of the binomial coefficients that are easily 
proved from the 
diagonal-coefficient, or Hadamard product, generating function 
results established in 
Section \ref{subSection_DiagonalGFSequences_Apps}. 
For example, for natural numbers $n \geq 1$, the next variants of the 
binomial-coefficient-related product sequences are enumerated by the 
following coefficient identities 
(\seqnum{A009120}, \seqnum{A001448}): 
\begin{align*} 
\tagonce\label{eqn_HybridDiagCoeffHPGFs_BinomCoeff_Examples-exps_v1} 
\frac{(4n)!}{(2n)!} & = 
     \frac{4^{4n} 
           \bcancel{\Pochhammer{1}{n}} \bcancel{\Pochhammer{\frac{2}{4}}{n}} 
           \Pochhammer{\frac{1}{4}}{n} \Pochhammer{\frac{3}{4}}{n}}{ 
           2^{2n} \bcancel{\Pochhammer{1}{n}} 
           \bcancel{\Pochhammer{\frac{1}{2}}{n}}} \\ 
     & = 
     4^{n} \times 4^{n} \Pochhammer{1/4}{n} \times 4^{n} \Pochhammer{3/4}{n} \\ 
     & = 
     [x_1^0 z^n]\left( 
     \ConvGF{n}{4}{3}{\frac{4z}{x_1}} \ConvGF{n}{4}{1}{x_1} 
     \right) \\ 
     & = 
     4^{n} \times \AlphaFactorial{4n-3}{4} \AlphaFactorial{4n-1}{4} \\ 
     & = 
     [x_1^0 z^n]\left( 
     \ConvGF{n}{-4}{4n-3}{\frac{4z}{x_1}} \ConvGF{n}{-4}{4n-1}{x_1} 
     \right) \\ 
\binom{4n}{2n} & = 
     [x_1^0 x_2^0 z^n]\Biggl( 
     \ConvGF{n}{4}{3}{\frac{4z}{x_2}} \ConvGF{n}{4}{1}{\frac{x_2}{x_1}} \times 
     \undersetbrace{\widehat{E}_2(x_1) = E_{2,1}(x_1)}{ 
     \cosh\left(\sqrt{x_1}\right) 
     } 
     \quad\Biggr) \\ 
     & = 
     [x_1^0 x_2^0 z^n]\Biggl( 
     \ConvGF{n}{-4}{4n-3}{\frac{4z}{x_2}} 
     \ConvGF{n}{-4}{4n-1}{\frac{x_2}{x_1}} \times 
     \undersetbrace{E_{2,1}(x_1)}{ 
     \cosh\left(\sqrt{x_1}\right) 
     } 
     \quad\Biggr). 
\end{align*} 
The examples given in 
Section \ref{subsubSection_remark_HybridDiagonalHPGFs} 
provide examples of related constructions 
of the hybrid rational convergent-based generating function products that 
generate the central binomial coefficients and 
several other notable cases of related sequence expansions. 
We can similarly generate the sequences of binomials, 
$(a+b)^n$ and $c^n-1$ for fixed non-zero $a,b,c \in \mathbb{R}$, 
which we consider in 
Section \ref{subsubSection_Apps_Example_SumsOfPowers_Seqs}, using the 
binomial theorem and a rational convergent-based approximation to the formal 
Laplace-Borel transform as follows: 
\begin{align*} 
(a+b)^n & = n! \times \sum_{k=0}^{n} \frac{a^k}{k!} \cdot 
     \frac{b^{n-k}}{(n-k)!} \\ 
     & = 
     [x^0][z^n] \ConvGF{n}{1}{1}{\frac{z}{x}} e^{(a+b) x} \\ 
     & = 
     [x^0][z^n] \ConvGF{n}{-1}{n}{\frac{z}{x}} e^{(a+b) x} \\ 
c^n-1 & = n! \times \sum_{k=0}^{n-1} \frac{(c-1)^{k+1}}{(k+1)!} \cdot 
     \frac{1}{(n-1-k)!} \\ 
     & = 
     [x^0][z^n] \ConvGF{n}{1}{1}{\frac{z}{x}} \left(e^{(c-1)x} - 1\right) e^x \\ 
     & = 
     [x^0][z^n] \ConvGF{n}{-1}{n}{\frac{z}{x}} \left(e^{(c-1)x} - 1\right) e^x. 
\end{align*} 

\subsection{Examples of new congruences for the 
            $\alpha$-factorial functions, the 
            Stirling numbers of the first kind, and the 
            $r$-order harmonic number sequences} 
\label{subsubSection_Examples_NewCongruences} 

\subsubsection{Congruences for the $\alpha$-factorial functions modulo $2$} 

For any fixed $\alpha \in \mathbb{Z}^{+}$ and natural numbers $n \geq 1$, the 
generalized multiple, $\alpha$-factorial functions, $n!_{(\alpha)}$, 
defined by \eqref{eqn_nAlpha_Multifact_variant_rdef} 
satisfy the following congruences modulo $2$ (and $2\alpha$): 
\begin{align} 
\notag 
n!_{(\alpha)} & \equiv 
     \frac{n}{2} \left(\left(n-\alpha + \sqrt{\alpha 
     (\alpha -n)}\right)^{\left\lfloor \frac{n-1}{\alpha }\right\rfloor } + 
     \left(n-\alpha - \sqrt{\alpha (\alpha -n)}\right)^{\left\lfloor 
     \frac{n-1}{\alpha }\right\rfloor }\right) \pmod{2, 2\alpha} \\ 
\label{eqn_cor_Congruences_for_AlphaFactFns_modulo2} 
     & = 
     [z^n] \left( 
     \frac{(z^{\alpha}-1)(1+(2\alpha-n)z^\alpha)}{z(1-z)\left( 
     (\alpha-n)(n \cdot z^{\alpha} - 2) z^{\alpha}-1\right)} 
     \right). 
\end{align} 
Given that the definition of the single factorial function implies that 
$n! \equiv 0 \pmod{2}$ whenever $n \geq 2$, the statement of 
\eqref{eqn_cor_Congruences_for_AlphaFactFns_modulo2} 
provides somewhat less obvious results for the 
generalized $\alpha$-factorial function sequence cases when $\alpha \geq 2$. 
\tableref{table245}
provides specific listings of the result in 
\eqref{eqn_cor_Congruences_for_AlphaFactFns_modulo2} satisfied by the 
$\alpha$-factorial functions, $\MultiFactorial{n}{\alpha}$, 
for $\alpha \defequals 1, 2, 3, 4$. 
The corresponding, closely-related new forms of 
congruence properties satisfied by 
these functions expanded by exact algebraic formulas 
modulo $3$ ($3\alpha$) and modulo $4$ ($4\alpha$) are also cited 
as special cases in the examples given in the next subsection 
(see Section \ref{subsubSection_GenpnAlphaR_Congr_Mod234}). 

\subsubsection{New forms of congruences for the $\alpha$-factorial functions 
               modulo $3$, modulo $4$, and modulo $5$} 

To simplify notation, we first define the next shorthand for the respective 
(distinct) roots, $r_{p,i}^{(\alpha)}(n)$ for $1 \leq i \leq p$, 
corresponding to the special cases of the convergent denominator 
functions, $\ConvFQ{p}{\alpha}{R}{z}$, 
factorized over $z$ for any fixed integers $n, \alpha \geq 1$ 
when $p \defequals 3, 4, 5$ 
\cite[\S 1.11(iii); \cf \S 4.43]{NISTHB}: 
\begin{align} 
\notag 
\left( r_{3,i}^{(\alpha)}(n) \right)_{i=1}^{3} & \defequals 
     \bigl\{ z_i : z_i^3 -3 z_i^2 (2 \alpha +n)+3 z_i (\alpha +n) 
     (2 \alpha +n) \\ 
\notag 
     & \phantom{\defequals \bigl\{ z_i : z_i^3\ } - 
     n (\alpha +n) (2 \alpha +n) = 0,\ 
     1 \leq i \leq 3 \bigr\} \\ 
\notag 
\left( r_{4,j}^{(\alpha)}(n) \right)_{j=1}^{4} & \defequals 
     \bigl\{ z_j : 
     z_j^4 - 4 z_j^3 (3 \alpha +n) + 
     6 z_j^2 (2 \alpha +n) (3 \alpha +n) - 
     4 z_j (\alpha +n) (2 \alpha +n) (3 \alpha +n) \\ 
\notag 
     & \phantom{\defequals \bigl( z_j : z_j^4\ } + 
     n (\alpha +n) (2 \alpha +n) (3 \alpha +n) = 0,\ 
     1 \leq j \leq 4 \bigr\} \\ 
\notag 
\left( r_{5,k}^{(\alpha)}(n) \right)_{k=1}^{5} & \defequals 
     \bigl\{ z_k : 
     z_k^5 - 5 (4 \alpha + n) z_k^4 + 
     10 (3 \alpha + n) (4 \alpha + n) z_k^3 \\ 
\notag 
     & \phantom{\defequals \bigl( z_k : z_k^5\ } - 
     10 (2 \alpha + n) (3 \alpha + n) (4 \alpha + n) z_k^2 \\ 
\notag 
     & \phantom{\defequals \bigl( z_k : z_k^5\ } + 
     5 (\alpha + n) (2 \alpha + n) (3 \alpha + n) (4 \alpha + n) z_k \\ 
\label{eqn_cor_AlphaFactMod3_orig_roots_eqn} 
     & \phantom{\defequals \bigl( z_k : z_k^5\ } - 
     n (\alpha + n) (2 \alpha + n) (3 \alpha + n) (4 \alpha + n) = 0,\ 
     1 \leq k \leq 5 \bigr\}. 
\end{align} 
Similarly, we define the following functions for any fixed 
$\alpha \in \mathbb{Z}^{+}$ and $n \geq 1$ to 
simplify the notation in stating next the congruences in 
\eqref{eqn_AlphaFactFnModulo3_congruence_stmts} below: 
\newcommand{\rootri}[2]{\ensuremath{r_{#1,#2}^{(-\alpha)}(n)}} 
\begin{align*} 
\widetilde{R}_3^{(\alpha)}(n) & \defequals 
     \frac{\left(6 \alpha ^2+\alpha 
     \left(6 \rootri{3}{1}-4 n\right)+\left(n-\rootri{3}{1}
     \right){}^2\right) 
     \rootri{3}{1}{}^{\left\lfloor \frac{n-1}{\alpha }\right\rfloor+1}}{ 
     \left(\rootri{3}{1}-\rootri{3}{2}\right) 
     \left(\rootri{3}{1}-\rootri{3}{3}\right)} \\ 
   & \phantom{\equiv\ } \quad + 
     \frac{\left(6 \alpha ^2+\alpha  \left(6 \rootri{3}{3}-4 n\right)+ 
     \left(n-\rootri{3}{3}\right){}^2\right)
      \rootri{3}{3}{}^{\left\lfloor \frac{n-1}{\alpha }\right\rfloor +1}}{ 
      \left(\rootri{3}{3}-\rootri{3}{1}\right) 
      \left(\rootri{3}{3}-\rootri{3}{2}\right)} \\ 
   & \phantom{\equiv\ } \quad + 
     \frac{\left(6 \alpha ^2+\alpha \left(6 \rootri{3}{2}-4 n\right) + 
     \left(n-\rootri{3}{2}\right){}^2\right) \rootri{3}{2}{}^{\left\lfloor 
     \frac{n-1}{\alpha }\right\rfloor +1}}{ 
     \left(\rootri{3}{2}-\rootri{3}{1}\right)
     \left(\rootri{3}{2}-\rootri{3}{3}\right)} \\ 
C_{4,i}^{(\alpha)}(n) & \defequals 
     24 \alpha^3-18 \alpha ^2 \left(n-2 \cdot \rootri{4}{i} 
     \right)+\alpha  \left(7 n-12 \cdot \rootri{4}{i}\right) 
     \left(n-\rootri{4}{i}\right) \\ 
   & \phantom{\defequals 24 \alpha^3\ } - 
     \left(n-\rootri{4}{i}\right)^3,\ 
     \text{ for } 
     1 \leq i \leq 4 \\ 
C_{5,k}^{(\alpha)}(n) & \defequals 
     120 \alpha^4 + 2 \alpha^2 \left(23 n^2-79 n \cdot \rootri{5}{k} + 
     60 \cdot \rootri{5}{k}^2\right) + 
     48 \alpha ^3 (2n - 5 \cdot \rootri{5}{k}) \\ 
     & \phantom{\defequals 120 \alpha^4\ } + 
     \alpha  (11 n-20 \rootri{5}{k}) (n-\rootri{5}{k})^2 + 
     (n-\rootri{5}{k})^4,\ 
     \text{ for } 
     1 \leq k \leq 5. 
\end{align*} 
For fixed $\alpha \in \mathbb{Z}^{+}$ and $n \geq 0$, 
we obtain the following analogs to the first congruence result modulo $2$ 
expanded by \eqref{eqn_cor_Congruences_for_AlphaFactFns_modulo2} 
for the $\alpha$-factorial functions, $n_{(\alpha)}$, when $n \geq 1$ 
(see Section \ref{subsubSection_GenpnAlphaR_Congr_Mod234}): 
\begin{align} 
\label{eqn_AlphaFactFnModulo3_congruence_stmts} 
n!_{(\alpha)} & \equiv \widetilde{R}_3^{(\alpha)}(n) && \pmod{3, 3\alpha} \\ 
\notag 
n!_{(\alpha)} & \equiv 
     \undersetbrace{ \defequals R_4^{(\alpha)}(n)}{
     \sum\limits_{1 \leq i \leq 4} 
     \frac{C_{4,i}^{(\alpha)}(n)}{\prod\limits_{j \neq i} 
     \left(\rootri{4}{i} - \rootri{4}{j}\right)} 
     \rootri{4}{i}^{\left\lfloor \frac{n+\alpha-1}{\alpha} \right\rfloor} 
     } 
     && \pmod{4, 4\alpha} \\ 
\notag 
n!_{(\alpha)} & \equiv 
     \undersetbrace{ \defequals R_5^{(\alpha)}(n)}{ 
     \sum\limits_{1 \leq k \leq 5} 
     \frac{C_{5,k}^{(\alpha)}(n)}{\prod\limits_{j \neq k} 
     \left(\rootri{5}{k} - \rootri{5}{j}\right)} 
     \rootri{5}{k}^{\left\lfloor \frac{n+\alpha-1}{\alpha} \right\rfloor} 
     } 
     && \pmod{5, 5\alpha}. 
\end{align} 
Several particular concrete examples illustrating the 
results cited in 
\eqref{eqn_cor_Congruences_for_AlphaFactFns_modulo2} 
modulo $2$ (and $2\alpha$), and in 
\eqref{eqn_AlphaFactFnModulo3_congruence_stmts} 
modulo $p$ (and $p\alpha$) for $p \defequals 3, 4, 5$, 
corresponding to the first few cases of 
$\alpha \geq 1$ and $n \geq 1$ appear in 
\tableref{table245}
Further computations of the congruences given in 
\eqref{eqn_AlphaFactFnModulo3_congruence_stmts} 
modulo $p\alpha^{i}$ (for some $0 \leq i \leq p$) are contained in the 
\Mm{} summary notebook 
included as a supplementary file with the 
submission of this article 
(see Section \ref{subSection_MmSummaryNBInfo} and the 
reference document \cite{SUMMARYNBREF-STUB}). 
The results in 
Section \ref{subSection_NewCongruence_Relations_Modulo_Integer_Bases} 
provide statements of these new integer congruences 
for fixed $\alpha \neq 0$ modulo any integers $p \geq 2$. 
The analogous formulations of 
the new relations for the factorial-related product sequences 
modulo any $p$ and $p\alpha$ 
are easily established for the subsequent cases of integers $p \geq 6$ 
from the partial fraction expansions of the 
convergent functions, $\ConvGF{h}{\alpha}{R}{z}$, 
cited in the particular listings in 
\tableref{table_SpCase_Listings_Of_PhzQhz_ConvFn} and in 
\tableref{table_RelfectedConvNumPolySeqs_sp_cases}, and 
through the generalized rational convergent function properties proved in 
Section \ref{Section_Props_Of_CFracExps_OfThe_GenFactFnSeries}. 

\subsubsection{New congruence properties for the 
               Stirling numbers of the first kind} 

The results given in 
Section \ref{subSection_NewCongruence_Relations_Modulo_Integer_Bases} also 
provide new congruences for the generalized Stirling number triangles 
in \eqref{eqn_Fa_rdef}, as well as several new forms of rational generating 
functions that enumerate the scaled factorial-power variants of the 
\emph{$r$-order harmonic numbers}, 
$(n!)^{r} \times H_n^{(r)}$, 
modulo integers $p \geq 2$ \cite[\S 6.3]{GKP} 
(see Section \ref{subsubSection_remark_New_Congruences_for_GenS1Triangles_and_HNumSeqs}). 
For example, the known congruences for the 
Stirling numbers of the first kind proved by the 
generating function techniques enumerated in the reference 
\cite[\S 4.6]{GFOLOGY} imply the next new 
congruence results satisfied by the binomial coefficients modulo $2$ 
(\seqnum{A087755}). 
\begin{align*} 
\binom{\lfloor \frac{n}{2} \rfloor}{m - \lceil \frac{n}{2} \rceil} & 
     \Iverson{1 \leq m \leq 6} & \\ 
     & \equiv 
     \Iverson{n > m} \times 
     \begin{cases} 
     \frac{2^{n}}{4}, & 
     \text{ if $m = 1$; } \\ 
     \frac{3 \cdot 2^{n}}{16} (n-1), & 
     \text{ if $m = 2$; } \\ 
     \frac{2^{n}}{128} (9n-20) (n-1), & 
     \text{ if $m = 3$; } \\ 
     \frac{2^{n}}{512} (3n-10) (3n-7) (n-1), & 
     \text{ if $m = 4$; } \\ 
     \frac{2^{n}}{8192} (27n^3-279n^2+934n-1008) (n-1), & 
     \text{ if $m = 5$; } \\ 
     \frac{2^{n}}{163840} (9n^2-71n+120) (3n-14) (3n-11) (n-1), & 
     \text{ if $m = 6$; } \\ 
     0, & \text{ otherwise. } 
     \end{cases} \\ 
     & \phantom{\qquad 1 } + 
     \Iverson{1 \leq m \leq 6} \Iverson{n = m} 
     \pmod{2}
\end{align*} 

\subsubsection{New congruences and rational generating functions for the 
               $r$-order harmonic numbers} 
The next results state several additional new congruence properties 
satisfied by the 
first-order, second-order, and third-order harmonic number sequences, 
each expanded by the rational generating functions enumerating these 
sequences modulo the first few small cases of integer-valued $p$ 
constructed from the generalized convergent functions in 
Section \ref{subsubSection_remark_New_Congruences_for_GenS1Triangles_and_HNumSeqs} (\seqnum{A001008}, \seqnum{A002805}, \seqnum{A007406},
\seqnum{A007407}, \seqnum{A007408}, \seqnum{A007409}). 
\begin{align*} 
(n!)^{3} \times H_n^{(3)} 
     & \equiv 
     [z^{n}] \left( 
     \mathsmaller{ 
     \frac{z \left(1-7 z+49 z^2-144 z^3+192 z^4\right)}{(1-8z)^2} 
     } 
     \right) 
     && \hspace{-6mm} \pmod{2} \\ 
(n!)^{2} \times H_n^{(2)} 
     & \equiv 
     [z^{n}] \left( 
     \mathsmaller{ 
     \frac{z \left(1-61 z+1339 z^2-13106 z^3+62284 z^4-144264 z^5+ 
     151776 z^6-124416 z^7+41472 z^8\right)}{(1-6 z)^3 
     \left(1-24 z+36 z^2\right)^2}
     } 
     \right) 
     && \hspace{-6mm} \pmod{3} \\ 
(n!) \times H_n^{(1)} 
     & \equiv 
     [z^{n+1}] \left( 
     \mathsmaller{ 
     \frac{36 z^2 - 48z + 325}{576} + 
     \frac{17040 z^2+1782 z+6467}{576 \left(24 z^3-36 z^2+12 z-1\right)}+\frac{78828 z^2-33987 z+3071}{288 \left(24
        z^3-36 z^2+12 z-1\right)^2} 
     } 
     \right) && \hspace{-6mm} \pmod{4} \\ 
     & \equiv 
     [z^{n\phantom{-1}}]\left( 
     \mathsmaller{ 
     \frac{3z-4}{48} + 
     \frac{1300 z^2+890 z+947}{96 \left(24 z^3-36 z^2+12 z-1\right)}+\frac{24568 z^2-10576 z+955}{96 \left(24 z^3-36 z^2+12 z-1\right)^2}
     } 
     \right) && \hspace{-6mm} \pmod{4} \\ 
     & \equiv 
     [z^{n-1}] \left( 
     \mathsmaller{ 
     \frac{1}{16} + 
     \frac{-96 z^2+794 z+397}{48 \left(24 z^3-36 z^2+12 z-1\right)}+\frac{5730 z^2-2453 z+221}{24 \left(24 z^3-36 z^2+12 z-1\right)^2} 
     } 
     \right) && \hspace{-6mm} \pmod{4}. 
\end{align*} 

\section{The Jacobi-type J-fractions for 
         generalized factorial function sequences} 
\label{Section_Proofs_of_the_GenCFracReps} 

\subsection{Enumerative properties of 
            Jacobi-type J-fractions}
\label{subSection_EnumProps_of_JFractions} 

To simplify the exposition in this article, we adopt the 
notation for the Jacobi-type continued fractions, or J-fractions, 
in 
(\cite{FLAJOLET80B,FLAJOLET82}, \cite[\cf \S 3.10]{NISTHB}, \cite[\cf \S 5.5]{GFLECT}). 
Given some application-specific choices of the prescribed 
sequences, $\{a_k, b_k, c_k\}$, we consider the 
formal power series whose coefficients are generated by the 
rational convergents, $J^{[h]}(z) \defequals J^{[h]}(\{a_k, b_k, c_k\}; z)$, 
to the infinite continued fractions, 
$J(z) \defequals J^{[\infty]}(\{a_k, b_k, c_k\}; z)$, defined as follows: 
\begin{align} 
\label{eqn_CF_exp_general_form} 
J(z) & = 
     \cfrac{1}{1-c_0z-\cfrac{a_0b_1 z^2}{1-c_1z- 
     \cfrac{a_1b_2 z^2}{\cdots}.}} 
\end{align} 
We briefly summarize the other enumerative properties from the 
references that are 
relevant in constructing the new factorial-function-related results 
given in the following subsections 
(\cite{FLAJOLET82,FLAJOLET80B}, \cite[\S 5.5]{GFLECT}, \cite[\cf \S 6.7]{GKP}). 

\begin{enumerate} 

\item \itemlabel{Definitions of the $h$-order convergent series} 
When $h \geq 1$, the $h^{th}$ convergent functions, 
given by the equivalent notation of 
$J^{[h]}(z)$ and $J^{[h]}(\{a_k, b_k, c_k\}; z)$ within this section, 
of the infinite continued fraction expansions in 
\eqref{eqn_CF_exp_general_form} 
are defined as the ratios, $J^{[h]}(z) \defequals P_h(z) / Q_h(z)$. 

The component functions corresponding to the convergent 
numerator and denominator sequences, $P_h(z)$ and $Q_h(z)$, 
each satisfy second-order 
finite difference equations (in $h$) of the respective forms 
defined by the next two equations. 
\begin{align*} 
P_h(z) & = (1-c_{h-1} z) P_{h-1}(z)-a_{h-2}b_{h-1} z^2 P_{h-2}(z) + 
           \Iverson{h = 1} \\ 
Q_h(z) & = (1-c_{h-1} z) Q_{h-1}(z)-a_{h-2}b_{h-1} z^2 Q_{h-2}(z) + 
           (1-c_0 z) \Iverson{h = 1}+\Iverson{h=0} 
\end{align*} 

\item \itemlabel{Rationality of truncated convergent function approximations} 
Let $p_n = p_n(\{a_k, b_k, c_k\}) \defequals [z^n] J(z)$ 
denote the expected term corresponding to the coefficient of $z^n$ in the 
formal power series expansion defined by the infinite J-fraction from 
\eqref{eqn_CF_exp_general_form}. 
For all $n \geq 0$, we know that the $h^{th}$ convergent functions 
have truncated power series expansions that satisfy 
\begin{equation*} 
p_n(\{a_k, b_k, c_k\}) = [z^n] J^{[h]}(\{a_k, b_k, c_k\}; z),\ 
     \forall n \leq h. 
\end{equation*} 
In particular, the series coefficients of the $h^{th}$ convergents are 
always at least $h$-order accurate as formal power series 
expansions in $z$ that exactly enumerate the expected sequence terms, 
$\left(p_n\right)_{n \geq 0}$. 

The resulting \quotetext{\emph{eventually periodic}} nature 
suggested by the approximate sequences enumerated by 
the rational convergent functions in $z$ is formalized in the 
congruence properties given below in 
\eqref{eqn_EnumProps_Of_JTypeCFracs_congruence_rels-stmt_v1} 
(\cite{FLAJOLET82}, \cite[See \S 2, \S 5.7]{GFLECT}). 

\item \itemlabel{Congruence properties modulo integer bases} 
Let $\lambda_k \defequals a_{k-1} b_{k}$ and suppose that the 
corresponding bases, $M_h$, are formed by the products 
$M_h \defequals \lambda_1 \lambda_2 \cdots \lambda_h$ for $h \geq 1$. 
Whenever $M_h \in \mathbb{Z}$, $N_h \mid M_h$, and $n \geq 0$, we have that 
\begin{equation} 
\label{eqn_EnumProps_Of_JTypeCFracs_congruence_rels-stmt_v1} 
p_n(\{a_k, b_k, c_k\}) \equiv [z^n] J^{[h]}(\{a_k, b_k, c_k\}; z) \pmod{N_h}, 
\end{equation} 
which is also true of all partial sequence terms enumerated by the 
$h^{th}$ convergent functions modulo any integer divisors of the $M_h$ 
\cite[\cf \S 5.7]{GFLECT}. 

\end{enumerate} 

\subsection{A short direct proof of the J-fraction representations for the 
            generalized product sequence generating functions} 
\label{subSection_GenCFrac_Reps_for_GenFactFns} 

We omit the details to a more combinatorially flavored proof that the 
J-fraction series defined by the convergent functions in 
\eqref{eqn_ConvGF_notation_def} do, in fact, correctly enumerate the expected 
symbolic product sequences in \eqref{eqn_GenFact_product_form}. 
Instead, a short direct proof following from the J-fraction results 
given in Flajolet's first article is sketched below. 
Even further combinatorial interpretations of the sequences generated by 
these continued fraction series, 
their relations to the Stirling number triangles, and other 
properties tied to the coefficient triangles studied in depth by the article 
\cite{MULTIFACTJIS} based on the properties of these new 
J-fractions is suggested as a topic for later investigation. 

\begin{definition} 
The prescribed sequences in the J-fraction expansions defined by 
\eqref{eqn_CF_exp_general_form} in the previous section, corresponding to 
(\emph{i}) the 
Pochhammer symbol, $\Pochhammer{x}{n}$, and 
(\emph{ii}) the 
convergent functions enumerating the generalized products, 
$\pn{n}{\alpha}{R}$, or equivalently, the 
Pochhammer $k$-symbols, $\Pochhammer{R}{n,\alpha}$, 
over any fixed $\alpha \in \mathbb{Z}^{+}$ and 
indeterminate, $R$, are defined as follows: 
\begin{align*} 
\tagtext{Pochhammer Symbol} 
\{\as_k(x), \bs_k(x), \cs_k(x)\} & 
     \quad\overset{\text{ (i)}}{\defequals}\quad 
     \left\{ x+k, k, x+2k \right\} \\ 
\tagtext{Generalized Products} 
\{\af_k(\alpha, R), \bfcf_k(\alpha, R), \cfcf_k(\alpha, R)\} & 
     \quad\overset{\text{ (ii)}}{\defequals}\quad 
     \left\{ 
     R+k\alpha, k, \alpha \cdot (R+2k \alpha) 
     \right\}.  
\end{align*} 
\textbf{\textit{Claim}:} 
We claim that the modified J-fractions, $R_0(R / \alpha, \alpha z)$, 
that generate the corresponding series expansions in 
\eqref{eqn_PochhammerSymbol_InfCFrac_series_Rep_example-v1} 
enumerate the analogous terms of the generalized 
symbolic product sequences, $\pn{n}{\alpha}{R}$, 
defined by \eqref{eqn_GenFact_product_form}. 
\end{definition} 
\begin{proof}[Proof of the Claim.] 
First, an appeal to the polynomial expansions of both the 
Pochhammer symbol, $\Pochhammer{x}{n}$, and 
then of the products, $\pn{n}{\alpha}{R}$, 
defined by \eqref{eqn_GenFact_product_form} by the 
Stirling numbers of the first kind, yields the following sums: 
\begin{align} 
\notag 
p_n(\alpha, R) \times z^n & = 
     \left( 
     \prod_{j=0}^{n-1} (R+j\alpha) \Iverson{n \geq 1} + \Iverson{n = 0} 
     \right) \times z^n \\ 
\notag 
   & = 
     \left( 
     \sum_{k=0}^{n} \gkpSI{n}{k} \alpha^{n-k} R^{k} 
     \right) \times z^{n} \\ 
\label{eqn_gen_pnAlphaR_seq_PochhammerSymbol_exps} 
   & = 
     \undersetbrace{\Pochhammer{R}{n,\alpha} \times z^{n}}{ 
     \alpha^{n} \cdot \Pochhammer{\frac{R}{\alpha}}{n} 
     \times z^{n}
     }. 
\end{align} 
Finally, after a parameter substitution of 
$x \defmapsto R / \alpha$ together with the change of variable 
$z \defmapsto \alpha z$ in the first results from 
Flajolet's article \cite{FLAJOLET80B}, 
we obtain identical forms of the convergent-based definitions for the 
generalized J-fraction definitions given in 
Definition \ref{def_GenConvFns_PFact_Phz_eqn_QFact_Qhz-defs_intro_v1}. 
\end{proof} 

\subsection{Alternate exact expansions of the generalized convergent functions} 
\label{subSection_AltExps_of_the_GenConvFns} 

The notational inconvenience introduced in the inner sums of 
\eqref{eqn_AlphaFactFn_Exact_PartialFracsRep_v1} and 
\eqref{eqn_AlphaFactFn_Exact_PartialFracsRep_v2}, 
implicitly determined by the shorthand for the coefficients, 
$c_{h,j}(\alpha, R)$, 
each of which depend on the more difficult terms of the 
convergent numerator function sequences, $\FP_h(\alpha, R; z)$, 
is avoided in place of alternate recurrence relations for each 
finite $h^{th}$ convergent function, $\ConvGF{h}{\alpha}{R}{z}$, 
involving paired products of the 
denominator polynomials, $\FQ_h(\alpha, R; z)$. 
These denominator function sequences are related to generalized forms of the 
Laguerre polynomial sequences as follows 
(see Section \ref{subsubSection_Properties_Of_ConvFn_Qhz}): 
\begin{align} 
\label{eqn_PFact_Qhz_Uident_LPoly_exp-stmt_v0} 
\FQ_h(\alpha, R; z) & = 
     (-\alpha z)^{h} \cdot h! \cdot 
     L_h^{(R / \alpha - 1)}\left((\alpha z)^{-1}\right). 
\end{align} 
The expansions provided by the formula in 
\eqref{eqn_PFact_Qhz_Uident_LPoly_exp-stmt_v0} 
suggest useful alternate formulations of the 
congruence results given below in 
Section \ref{subSection_NewCongruence_Relations_Modulo_Integer_Bases} 
when the Laguerre polynomial, or corresponding 
confluent hypergeometric function, zeros 
are considered to be less complicated in form than the 
more involved sums expanded through the numerator functions, 
$\FP_h(\alpha, R; z)$. 

\begin{example}[Recurrence Relations for the Convergent Functions and Laguerre Polynomials] 
The well-known cases of the enumerative properties satisfied by the 
expansions of the convergent function sequences given in the references 
immediately yield the following relations 
(\cite[\S 3]{FLAJOLET80B}, \cite[\S 1.12(ii)]{NISTHB}): 
\begin{align} 
\notag 
\ConvGF{h}{\alpha}{R}{z} & = 
     \ConvGF{h-k}{\alpha}{R}{z} \\ 
\notag 
     & \phantom{= \quad } + 
     \sum_{i=0}^{k-1} 
     \frac{\alpha^{h-i-1} (h-i-1)! \cdot p_{h-i-1}(\alpha, R) \cdot 
     z^{2(h-i-1)}}{ 
     \FQ_{h-i}(\alpha, R; z) \FQ_{h-i-1}(\alpha, R; z)},\ 
     h > k \geq 1 \\ 
\label{eqn_AltExps_of_the_GenConvFns-rec_properties-stmts_v1} 
\ConvGF{h}{\alpha}{R}{z} & = 
     \sum_{i=0}^{h-1} 
     \frac{\alpha^{h-i-1} (h-i-1)! \cdot p_{h-i-1}(\alpha, R) \cdot 
     z^{2(h-i-1)}}{ 
     \FQ_{h-i}(\alpha, R; z) \FQ_{h-i-1}(\alpha, R; z)} \\ 
\notag 
     & = 
     \sum_{i=0}^{h-1} \binom{\frac{R}{\alpha}+i-1}{i} \times 
     \frac{(-\alpha z)^{-1}}{(i+1) \cdot 
     L_{i}^{(R / \alpha - 1)}\left((\alpha z)^{-1}\right) 
     L_{i+1}^{(R / \alpha - 1)}\left((\alpha z)^{-1}\right)},\ 
     h \geq 2. 
\end{align} 
The convergent function recurrences expanded by 
\eqref{eqn_AltExps_of_the_GenConvFns-rec_properties-stmts_v1} 
provide identities for particular cases of the generalized 
Laguerre polynomial sequences, $L_n^{(\beta)}(x)$, 
expressed as finite sums over paired reciprocals of the 
sequence at the same choices of the $x$ and $\beta$. 
\end{example} 

Topics aimed at finding new results obtained from other known, strictly 
continued-fraction-related properties of the 
convergent function sequences beyond the proofs given in 
Section \ref{Section_Props_Of_CFracExps_OfThe_GenFactFnSeries} 
are suggested as a further avenue to approach the 
otherwise divergent ordinary generating functions for these 
more general cases of the integer-valued 
factorial-related product sequences 
\cite[\cf \S 10]{HARDYWRIGHTNUMT}.

\begin{remark}[Related Convergent Function Expansions] 
For $h-i \geq 0$, 
we have a noteworthy Rodrigues-type formula 
satisfied by the Laguerre polynomial sequences in 
\eqref{eqn_PFact_Qhz_Uident_LPoly_exp-stmt_v0} 
stated by the reference as follows \cite[\S 18.5(ii)]{NISTHB}: 
\begin{align} 
\notag 
(-\alpha z)^{h-i} \cdot (h-i)! \times 
     L_{h-i}^{(\beta)}\left(\frac{1}{\alpha z}\right) & = 
     \alpha^{h-i} \cdot z^{2h-2i+\beta+1} e^{1 / \alpha z} \times 
     \left\{ 
     \frac{e^{-1 / \alpha z}}{z^{\beta+1}} 
     \right\}^{(h-i)}. 
\end{align} 
The multiple derivatives implicit to the statement in the previous 
equation then have the additional expansions through the 
product rule analog provided by the formula of Halphen 
from the reference given in the form of the following equation 
for natural numbers $n \geq 0$, where the 
notation for the functions, $F(z)$ and $G(z)$, employed in the 
previous equation corresponds to any prescribed choice of these functions 
that are each $n$ times continuously differentiable at the point $z \neq 0$ 
\cite[\S 3 Exercises, p.\ 161]{ADVCOMB}: 
\begin{align*} 
\tagtext{Halphen's Product Rule} 
\Biggl\{ F\left(\frac{1}{z}\right) G(z) \Biggr\}^{(n)} & = 
     \sum_{k=0}^{n} \binom{n}{k} \frac{(-1)^{k}}{z^{k}} \cdot 
     F^{(k)}\left(\frac{1}{z}\right) 
     \left\{ \frac{G(z)}{z^{k}} \right\}^{(n-k)}. 
\end{align*} 
The next particular restatements of 
\eqref{eqn_AltExps_of_the_GenConvFns-rec_properties-stmts_v1} 
then follow easily from the last two equations as 
\begin{align} 
\notag 
& \ConvGF{h}{\alpha}{R}{z} = 
     \ConvGF{h-k}{\alpha}{R}{z} \\ 
\notag 
     & \phantom{\Conv_h } + 
     \sum_{i=0}^{k-1} 
     \frac{(h-i-1)!}{(\alpha z) \cdot \Pochhammer{R / \alpha}{h-i}} \times 
     \frac{1}{_1F_1\left(-(h-i); \frac{R}{\alpha}; \frac{1}{\alpha z}\right) 
     {_1F_1}\left(-(h-i-1); \frac{R}{\alpha}; \frac{1}{\alpha z}\right)},\ 
     1 \leq k < h \\ 
\notag 
& \ConvGF{h}{\alpha}{R}{z} = 
     \sum_{i=0}^{h-1} 
     \frac{(h-i-1)!}{(\alpha z) \cdot \Pochhammer{R / \alpha}{h-i}} \times 
     \frac{1}{_1F_1\left(-(h-i); \frac{R}{\alpha}; \frac{1}{\alpha z}\right) 
     {_1F_1}\left(-(h-i-1); \frac{R}{\alpha}; \frac{1}{\alpha z}\right)} \\ 
\label{eqn_AltExps_of_the_GenConvFns-rec_properties-stmts_v2} 
& \phantom{\ConvGF{h}{\alpha}{R}{z}} = 
     \sum_{i=1}^{h} 
     \binom{\frac{R}{\alpha}+i-1}{i-1}^{-1} \times 
     \frac{(-R z)^{-1}}{
     {_1F_1}\left(1-i; \frac{R}{\alpha}; \frac{1}{\alpha z}\right) 
     {_1F_1}\left(-i; \frac{R}{\alpha}; \frac{1}{\alpha z}\right)}, 
\end{align} 
where the $h^{th}$ convergents, $\ConvGF{h}{\alpha}{R}{z}$, 
are rational in $z$ for all $h \geq 1$. 
The functions $_1F_1(a; b; z)$, or 
$\HypM{a}{b}{z} = \sum_{s \geq 0} \frac{(a)_s}{(b)_s s!} z^{s}$, 
in the previous equations denote 
\emph{Kummer's confluent hypergeometric function} 
(\cite[\S 13.2]{NISTHB},\cite[\S 5.5]{GKP}). 
\end{remark} 

\section{Properties of the generalized convergent functions} 
\label{Section_Props_Of_CFracExps_OfThe_GenFactFnSeries} 

We first focus on the comparatively simple factored expressions for the 
series coefficients of the denominator sequences in the 
results proved by 
Section \ref{subsubSection_Properties_Of_ConvFn_Qhz}. 
The identification of the convergent denominator functions as 
special cases of the confluent hypergeometric function 
yields additional identities providing 
analogous addition and multiplication theorems for these functions 
with respect to the parameter $z$, as well as a number of further, new 
recurrence relations derived from established relations 
stated in the references, 
such as those provided by Kummer's transformations. 
These properties form a superset of 
extended results beyond the immediate, more combinatorial, 
known relations for the J-fractions summarized in 
Section \ref{subSection_EnumProps_of_JFractions} and in 
Section \ref{subSection_AltExps_of_the_GenConvFns}. 

The numerator convergent sequences considered in 
Section \ref{subsubSection_Properties_Of_ConvFn_Phz} 
have less obvious 
expansions through special functions, or otherwise more well-known 
polynomial sequences. 
A point concerning the relative simplicity of the 
expressions of the denominator convergent polynomials 
compared to the numerator convergent sequences is 
also mentioned in Flajolet's article \cite[\S 3.1]{FLAJOLET80B}. 
The last characterization of the generalized convergent denominator 
functions by the Laguerre polynomials in 
Proposition \ref{prop_Closed-Form_Rep_for_ConvFn_Qhz} 
below provides the factorizations over the 
zeros of the classical orthogonal polynomial sequences studied in the 
references \cite{LGWORKS-ASYMP-SPFNZEROS2008,PROPS-ZEROS-CHYPFNS80} 
which are required to state the results provided by 
\eqref{eqn_AlphaFactFn_Exact_PartialFracsRep_v1} and 
\eqref{eqn_AlphaFactFn_Exact_PartialFracsRep_v2} from 
Section \ref{subSection_Intro_GenConvFn_Defs_and_Properties}, and 
more generally by 
\eqref{eqn_AlphaFactFnModulop_congruence_stmts} in 
Section \ref{subSection_Congruences_for_Series_ModuloIntegers_p}. 

\subsection{The convergent denominator function sequences} 
\label{subsubSection_Properties_Of_ConvFn_Qhz} 

In contrast to the convergent numerator functions, 
$\FP_h(\alpha, R; z)$, discussed next in 
Section \ref{subsubSection_Properties_Of_ConvFn_Phz}, the 
corresponding denominator functions, $\FQ_h(\alpha, R; z)$, 
are readily expressed through well-known special functions. 
The first several special cases given in 
\tableref{table_SpCase_Listings_Of_Qhz_ConvFn} 
suggest the next identity, which is proved following 
Proposition \ref{prop_Closed-Form_Rep_for_ConvFn_Qhz} below. 
\begin{align} 
\label{eqn_PFact_Qhz_product_ident} 
\FQ_h(\alpha, R; z) & = 
     \sum_{k=0}^{h} \binom{h}{k} (-1)^{k} \left(\prod_{j=0}^{k-1} 
     (R + (h-1-j)\alpha)\right) z^k 
\end{align} 
The convergent denominator functions are expanded by the 
\emph{confluent hypergeometric functions}, 
$\HypU{-h}{b}{w}$ and $\HypM{-h}{b}{w}$, 
and equivalently by the 
\emph{associated Laguerre polynomials}, $L_h^{(b-1)}(w)$, 
when $b \defmapsto R / \alpha$ and $w \defmapsto (\alpha z)^{-1}$ 
through the relations proved in the next proposition 
\cite[\cf \S 18.6(iv); \S 13.9(ii)]{NISTHB}. 

\begin{prop}[Exact Representations by Special Functions] 
\label{prop_Closed-Form_Rep_for_ConvFn_Qhz} 
The convergent denominator functions, $\FQ_h(\alpha, R; z)$, are 
expanded in terms of the confluent hypergeometric function and the 
associated Laguerre polynomials through the following results: 
\begin{align} 
\label{eqn_PFact_Qhz_Uident} 
\FQ_h(\alpha, R; z) 
     & = 
     \left(\alpha z\right)^{h} \times 
     \HypU{-h}{R / \alpha}{(\alpha z)^{-1}} \\ 
\label{eqn_PFact_Qhz_Mident} 
     & = 
     \left(-\alpha z\right)^{h} \Pochhammer{\frac{R}{\alpha}}{h} \times 
     \HypM{-h}{R / \alpha}{(\alpha z)^{-1}} \\ 
\label{eqn_PFact_Qhz_Uident_LPoly_exp-stmt_v1} 
   & = 
     (-\alpha z)^{h} \cdot h! \times 
     L_h^{(R / \alpha - 1)}\left((\alpha z)^{-1}\right). 
\end{align} 
\end{prop}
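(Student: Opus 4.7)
The approach is to first establish the explicit closed form~\eqref{eqn_PFact_Qhz_product_ident} for $\FQ_h(\alpha, R; z)$ by induction on $h$, and then to recognize the resulting polynomial as $(-\alpha z)^h \cdot h!$ times the associated Laguerre polynomial $L_h^{(R/\alpha - 1)}$ evaluated at $(\alpha z)^{-1}$. Standard identities relating $L_h^{(\beta)}$ to Kummer's $U$ and $M$ at non-positive integer first argument then convert the Laguerre form into~\eqref{eqn_PFact_Qhz_Uident} and~\eqref{eqn_PFact_Qhz_Mident} without further work, so all three stated equalities reduce to a single calculation together with the invocation of two textbook identities.

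\textbf{The induction.} For the base cases, $\FQ_0(\alpha, R; z) = 1$ and $\FQ_1(\alpha, R; z) = 1 - R z$ match~\eqref{eqn_PFact_Qhz_product_ident} by direct evaluation. For the inductive step, I would substitute the claimed expansion at both $h - 1$ and $h - 2$ into the three-term recurrence~\eqref{eqn_QFact_Qhz} and collect the coefficient of $z^k$ on the right. Using the Pascal identity $\binom{h-1}{k} + \binom{h-1}{k-1} = \binom{h}{k}$, the contribution of $(1 - (R + 2\alpha(h-1)) z) \FQ_{h-1}(\alpha, R; z)$ combines with that of $-\alpha (R + \alpha(h-2))(h-1) z^2 \FQ_{h-2}(\alpha, R; z)$ so that the shifted product $\prod_{j=0}^{k-2}(R + (h-2-j)\alpha)$ is extended at its upper endpoint to yield $\prod_{j=0}^{k-1}(R + (h-1-j)\alpha)$, with the correct binomial prefactor $\binom{h}{k}$ and the correct sign.

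\textbf{From the explicit formula to the special-function forms.} Setting $r \defequals R/\alpha$ and pulling $\alpha^k$ out of each factor rewrites~\eqref{eqn_PFact_Qhz_product_ident} as
\begin{equation*}
\FQ_h(\alpha, R; z) = \sum_{k=0}^{h} (-1)^k \binom{h}{k} (r + h - k)_k (\alpha z)^k.
\end{equation*}
The reindexing $m = h - k$ then gives $\FQ_h(\alpha, R; z) = (-\alpha z)^h \sum_{m=0}^{h} (-1)^m \binom{h}{m} (r + m)_{h - m} (\alpha z)^{-m}$, which is exactly $(-\alpha z)^h \cdot h!$ times the standard series $h! \cdot L_h^{(\beta)}(x) = \sum_{m=0}^{h} (-1)^m \binom{h}{m} (\beta + 1 + m)_{h - m} x^m$ specialized to $\beta = r - 1$ and $x = (\alpha z)^{-1}$. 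This establishes~\eqref{eqn_PFact_Qhz_Uident_LPoly_exp-stmt_v1}. Applying the classical identities $h! \cdot L_h^{(\beta)}(x) = (-1)^h \HypU{-h}{\beta+1}{x} = (\beta + 1)_h \HypM{-h}{\beta+1}{x}$ with $\beta + 1 = R/\alpha$ and $x = (\alpha z)^{-1}$ then produces~\eqref{eqn_PFact_Qhz_Uident} and~\eqref{eqn_PFact_Qhz_Mident}, respectively.

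\textbf{Main obstacle.} The only delicate piece is the inductive verification, which is bookkeeping rather than insight and hinges on correctly separating the contribution that shifts the upper endpoint of the product $\prod_{j}(R + (h-1-j)\alpha)$ from the contribution that extends its length by one. A cleaner alternative would be to verify directly that the candidate expression $(-\alpha z)^h \cdot h! \cdot L_h^{(R/\alpha - 1)}((\alpha z)^{-1})$ satisfies both the recurrence and the boundary data of~\eqref{eqn_QFact_Qhz} by invoking the classical three-term recurrence $(n+1) L_{n+1}^{(\beta)}(x) = (2n + \beta + 1 - x) L_n^{(\beta)}(x) - (n + \beta) L_{n-1}^{(\beta)}(x)$ after the change of variable $x = (\alpha z)^{-1}$; this bypasses the combinatorial bookkeeping at the cost of tracking the powers of $z$ generated by the $x$-to-$z$ conversion, and either route produces essentially the same algebra.
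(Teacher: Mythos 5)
Your argument is correct, but it runs in the opposite direction from the paper's. The paper proves the $\HypU{-h}{R/\alpha}{(\alpha z)^{-1}}$ form \eqref{eqn_PFact_Qhz_Uident} first, by induction on $h$ through the recurrence \eqref{eqn_QFact_Qhz} combined with the cited contiguous relation \eqref{eqn_HyperFactU_proof_recurrence_ident} for the confluent hypergeometric function, and then obtains the $M$- and Laguerre-forms from standard connection formulas; the explicit coefficient expansion \eqref{eqn_PFact_Qhz_product_ident} is proved \emph{afterwards}, as a corollary, via the known finite-sum representation \eqref{eqn_HyperFactU_proof_sum_ident-v2} of $U(-n,b,z)$. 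You instead prove \eqref{eqn_PFact_Qhz_product_ident} first by an elementary induction and only then identify the sum with the Laguerre series, converting to the $U$ and $M$ forms by the same classical identities. Both routes are sound; yours keeps the inductive step free of any special-function input (though, to be precise, the coefficient check needs the absorption identity $k\binom{h-1}{k}=(h-1)\binom{h-2}{k-1}$ in addition to Pascal's rule, the essential cancellation being that the factor $R+(h-2)\alpha$ in the $z^2$-term of \eqref{eqn_QFact_Qhz} absorbs into the leading factor of the length-$(k-1)$ product), and as a bonus it establishes \eqref{eqn_PFact_Qhz_product_ident} simultaneously, which the paper handles in a separate proof. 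What the paper's route buys is a shorter induction — the three-term recurrence for $\FQ_h$ is matched term-by-term against a cited NIST recurrence rather than expanded coefficientwise — at the cost of deferring the explicit formula. The alternative you sketch at the end (verifying that $(-\alpha z)^h h!\,L_h^{(R/\alpha-1)}((\alpha z)^{-1})$ satisfies \eqref{eqn_QFact_Qhz} via the classical Laguerre three-term recurrence) is essentially the paper's proof transported through the Laguerre--$U$ connection, so you have in effect identified both arguments.
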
 
\begin{proof} 
We proceed to prove the first identity in 
\eqref{eqn_PFact_Qhz_Uident} by induction. 
It is easy to verify by computation 
(see Table \ref{table_SpCase_Listings_Of_Qhz_ConvFn}) 
that the left-hand-side and right-hand-sides of 
\eqref{eqn_PFact_Qhz_Uident} coincide when $h = 0$ and $h = 1$. 
For $h \geq 2$, we apply the recurrence relation 
from \eqref{eqn_QFact_Qhz} to write the 
right-hand-side of \eqref{eqn_PFact_Qhz_Uident} as 
\begin{align} 
\label{eqn_FQhz_HyperFactU_proof_intermed_eqn-v1} 
\FQ_h(\alpha, R; z) & = (1-(R+2\alpha (h-1)) z) 
     \HypU{-h+1}{R / \alpha}{(\alpha z)^{-1}} (\alpha z)^{h-1} \\ 
\notag 
   & \phantom{= (1\ \ } - 
     \alpha (R+\alpha (h-2)) (h-1) z^2 
     \HypU{-h+2}{R / \alpha}{(\alpha z)^{-1}} (\alpha z)^{h-2}. 
\end{align} 
The proof is completed using the 
known recurrence relation for the confluent hypergeometric function 
stated in reference as \cite[\S 13.3(i)]{NISTHB} 
\begin{equation} 
\label{eqn_HyperFactU_proof_recurrence_ident} 
\HypU{-h}{b}{u} = (u-b-2(h-1)) \HypU{-h+1}{b}{u} - 
     (h-1) (b+h-2) \HypU{-h+2}{b}{u}. 
\end{equation} 
In particular, we can rewrite 
\eqref{eqn_FQhz_HyperFactU_proof_intermed_eqn-v1} as 
\begin{align} 
\notag 
\FQ_h(\alpha, R; z) & = 
     (\alpha z)^{h} \Biggl[\left((\alpha z)^{-1}-\left( 
     \frac{R}{\alpha} + 2(h-1)\right)\right) 
     \HypU{-h+1}{R / \alpha}{(\alpha z)^{-1}} \\ 
   & \phantom{= (\alpha z)^{h} \Biggl[ \Bigl(} - 
     \left(\frac{R}{\alpha}+h-2\right) (h-1) 
     \HypU{-h+2}{R / \alpha}{(\alpha z)^{-1}}\Biggr], 
\end{align} 
which implies \eqref{eqn_PFact_Qhz_Uident} in the special case of 
\eqref{eqn_HyperFactU_proof_recurrence_ident} where 
$(b, u) \defequals (R / \alpha, (\alpha z)^{-1})$. 
The second characterizations of $\FQ_h(\alpha, R; z)$ by 
Kummer's confluent hypergeometric function, $M(a, b, z)$, 
in \eqref{eqn_PFact_Qhz_Mident}, and by the 
Laguerre polynomials stated in 
\eqref{eqn_PFact_Qhz_Uident_LPoly_exp-stmt_v1}, follow from the 
first result whenever $h \geq 0$ \cite[\S 13.6(v), \S 18.11(i)]{NISTHB}. 
\end{proof} 

\begin{proof}[Proof of Equation \eqref{eqn_PFact_Qhz_product_ident}] 
The first identity for the denominator functions, $\FQ_h(\alpha, R; z)$, 
conjectured from the special case table listings by 
\eqref{eqn_PFact_Qhz_product_ident}, follows from the 
first statement of the previous proposition. 
We cite the particular expansions of $U(-n, b, z)$ when 
$n \geq 0$ is integer-valued involving the Pochhammer symbol, $(x)_n$, 
stated as follows \cite[\S 13.2(i)]{NISTHB}: 
\begin{align} 
\notag 
U(-n, b, z) & = \sum_{k=0}^{n} \binom{n}{k} (b+k)_{n-k} (-1)^n (-z)^k \\ 
\label{eqn_HyperFactU_proof_sum_ident-v2} 
   & = 
     \sum_{k=0}^{n} \binom{n}{k} (b+n-k)_{k} (-1)^{k} z^{n-k}. 
\end{align} 
The second sum for the confluent hypergeometric function given in 
\eqref{eqn_HyperFactU_proof_sum_ident-v2} 
then implies that the right-hand-side of 
\eqref{eqn_PFact_Qhz_Uident} can be expanded as follows: 
\begin{align*} 
\FQ_h(\alpha, R; z) & = 
     \left(\alpha z\right)^{h} 
     \HypU{-h}{R / \alpha}{(\alpha z)^{-1}} \\ 
   & = 
     (\alpha z)^h \sum_{k=0}^{h} \binom{h}{k} (-1)^k 
     \left(\frac{R}{\alpha} + h-k\right)_{k} (\alpha z)^{k-h} \\ 
   & = \phantom{(\alpha z)^h} 
     \sum_{k=0}^{h} \binom{h}{k} 
     \left(\frac{R}{\alpha} + h-k\right)_{k} (-\alpha z)^{k} \\ 
   & = \phantom{(\alpha z)^h} 
     \sum_{k=0}^{h} \binom{h}{k} 
     \underset{
     \mathlarger{(\pm 1)^{k} \pn{k}{\mp \alpha}{\pm R \pm (h-1) \alpha}}
     }{ 
     \underbrace{ 
     \left( 
     (-1)^{k} \times \prod_{j=0}^{k-1} \left(R+(h-1-j) \alpha \right) 
     \right) 
     } 
     } 
     z^{k}. 
\end{align*} 
The last line of previous equations 
provides the required expansion to complete a proof of the 
first identity cited in \eqref{eqn_PFact_Qhz_product_ident}. 
We obtain the alternate restatements of these 
coefficients of $z^k$ provided by the following equations for $p \geq 0$: 
\begin{align*} 
\tagonce\label{eqn_CoeffsOfzk_FQhaRz_restmts_by_GenProductSeqs-v1} 
[z^k] \FQ_p(\alpha, R; z) & = 
     \binom{p}{k} (-1)^{k} 
     p_k(-\alpha, R + (p-1) \alpha) 
     \cdot \Iverson{0 \leq k \leq p} \\ 
     & = 
     \binom{p}{k} \alpha^{k} \Pochhammer{1-p-R / \alpha}{k} 
     \cdot \Iverson{0 \leq k \leq p} \\ 
[z^k] \FQ_p(\alpha, R; z) & = 
     \binom{p}{k} p_k(\alpha, -R - (p-1) \alpha) 
     \cdot \Iverson{0 \leq k \leq p} \\ 
     & = 
     \binom{p}{k} (-\alpha)^{k} \FFactII{\left(R / \alpha + p-1\right)}{k} 
     \cdot \Iverson{0 \leq k \leq p}. 
\qedhere 
\end{align*} 
\end{proof} 

\begin{cor}[Recurrence Relations] 
\label{cor_HypU-Based_recurrences_for_FQhz} 
For $h \geq 0$ and any integers $s > -h$, the convergent denominator functions, 
$\FQ_h(\alpha, R; z)$, satisfy the reflection identity, or 
analog to Kummer's transformation for the 
confluent hypergeometric function, given by 
\begin{equation} 
\label{eqn_RecurrenceRelations_HypU_for_FQhz_stmt-v1} 
\FQ_h(\alpha, \alpha s; z) = \FQ_{h+s-1}(\alpha; \alpha (2-s); z). 
\end{equation} 
Additionally, for $h \geq 0$ these functions satisfy 
recurrence relations of the following forms: 
\begin{align} 
\label{eqn_RecurrenceRelations_HypU_for_FQhz_stmt-v2} 
(R+(h-1)\alpha) z \FP_h(\alpha, R-\alpha; z) + ((\alpha-R) z-1) 
     \FQ_h(\alpha, R; z) + \FQ_h(\alpha, R+\alpha; z) & = 0 \\ 
\notag 
\FQ_h(\alpha, R; z) + \alpha h z \FQ_{h-1}(\alpha, R; z) - 
     \FQ_h(\alpha, R-\alpha; z) & = 0 \\ 
\notag 
(R+\alpha h) z \FQ_h(\alpha, R; z) + \FQ_{h+1}(\alpha, R; z) - 
     \FQ_{h}(\alpha, R+\alpha; z) & = 0 \\ 
\notag 
(1-\alpha h z) \FQ_h(\alpha, R; z) - \FQ_h(\alpha, R+\alpha; z) - 
     \alpha h (R + (h - 1) \alpha) z^2 \FQ_{h-1}(\alpha, R; z) & = 0 \\ 
\notag 
(1-(h+1) \alpha z) \FQ_h(\alpha, R; z) - \FQ_{h+1}(\alpha, R; z) - 
     (R + (h-1) \alpha) z \FQ_h(\alpha, R-\alpha; z) & = 0 \\ 
\notag 
\alpha (h-1) z \FQ_{h-2}(\alpha, R+2\alpha; z) - (1-Rz) 
     \FQ_{h-1}(\alpha, R+\alpha; z) + \FQ_h(\alpha, R; z) & = \delta_{h,0}. 
\end{align} 
\end{cor}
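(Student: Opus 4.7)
The plan is to reduce each identity in the corollary to a known contiguous-function relation for the confluent hypergeometric function $U(a,b,w)$ via Proposition \ref{prop_Closed-Form_Rep_for_ConvFn_Qhz}. That proposition supplies $\FQ_h(\alpha, R; z) = (\alpha z)^h \cdot \HypU{-h}{R/\alpha}{(\alpha z)^{-1}}$, so the substitution $a \defmapsto -h$, $b \defmapsto R/\alpha$, $w \defmapsto (\alpha z)^{-1}$ converts every shift in $h$ or $R$ on the left side of an identity into a shift of the corresponding parameter of $U$ on the right, up to a tracked power of $(\alpha z)$.

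For the reflection identity \eqref{eqn_RecurrenceRelations_HypU_for_FQhz_stmt-v1} I would apply Kummer's transformation $U(a, b, w) = w^{1-b} \cdot U(a-b+1,\, 2-b,\, w)$ with $a = -h$, $b = s$, and $w = (\alpha z)^{-1}$. This yields $\HypU{-h}{s}{(\alpha z)^{-1}} = (\alpha z)^{s-1} \cdot \HypU{-(h+s-1)}{2-s}{(\alpha z)^{-1}}$; multiplying through by $(\alpha z)^{h}$ produces $(\alpha z)^{h+s-1} \cdot \HypU{-(h+s-1)}{2-s}{(\alpha z)^{-1}}$ on the right, which by Proposition \ref{prop_Closed-Form_Rep_for_ConvFn_Qhz} is precisely $\FQ_{h+s-1}(\alpha, \alpha(2-s); z)$.

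Of the remaining six identities, the five that involve only $\FQ_h$ at neighboring $h$ and at $R$-shifts by $\pm \alpha$ I would handle by matching them one-by-one against the standard three-term contiguous relations for $U(a,b,z)$ listed in NIST DLMF \S 13.3 (the same catalog already referenced in the proof of Proposition \ref{prop_Closed-Form_Rep_for_ConvFn_Qhz}). After the substitution above, each left-hand side becomes $(\alpha z)^h$ times a $\mathbb{Z}$-linear combination of $U(-h \pm 1,\, R/\alpha \pm 1,\, (\alpha z)^{-1})$ whose vanishing is exactly one of the Gauss contiguous relations; the coefficient bookkeeping reduces to tracking factors of $(\alpha z)$ and rewriting Pochhammer shifts in $R/\alpha$ as products involving $R + k\alpha$. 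The auxiliary three-term recurrence \eqref{eqn_HyperFactU_proof_recurrence_ident} already invoked in the proof of the proposition is one such Gauss relation and accounts for the identity that shifts only $h$.

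The main obstacle will be the first displayed identity \eqref{eqn_RecurrenceRelations_HypU_for_FQhz_stmt-v2}, which mixes $\FP_h(\alpha, R-\alpha; z)$ with $\FQ_h$ at $R$ and $R+\alpha$, because the numerator sequence $\FP_h$ does not admit as compact a single-$U$ representation as $\FQ_h$ does. My plan is to handle this case in one of two equivalent ways: either by an induction on $h$ that uses the common second-order recurrence shared by $\{\FP_h\}$ and $\{\FQ_h\}$ (verified against the base data $\FP_0 = 0$, $\FP_1 = 1$, $\FQ_0 = 1$, $\FQ_1 = 1-Rz$ at $h = 0, 1$) together with the already-proved $\FQ$-only identities from the previous paragraph; or, alternatively, by first identifying $\FP_h(\alpha, R-\alpha; z)$ with the second, linearly independent solution of the same $U$-type three-term recurrence that underlies $\FQ_h$ — equivalently, an associated Laguerre companion to $L_h^{(R/\alpha - 1)}$ — and then invoking the same Gauss contiguous family at the shifted parameter $b \defmapsto R/\alpha - 1$. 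Either route trades the asymmetry between $\FP_h$ and $\FQ_h$ for a computation carried out entirely inside the $U$-recurrence, after which the claimed linear combination collapses to zero term-by-term.
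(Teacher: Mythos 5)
Your treatment of the reflection identity \eqref{eqn_RecurrenceRelations_HypU_for_FQhz_stmt-v1} and of the five recurrences in \eqref{eqn_RecurrenceRelations_HypU_for_FQhz_stmt-v2} that involve only the denominator functions is exactly the paper's argument: substitute the representation $\FQ_h(\alpha, R; z) = (\alpha z)^{h}\,\HypU{-h}{R/\alpha}{(\alpha z)^{-1}}$ from \eqref{eqn_PFact_Qhz_Uident}, apply Kummer's transformation $\HypU{a}{b}{w} = w^{1-b}\,\HypU{a-b+1}{2-b}{w}$ for the first claim, and match the remaining relations against the contiguous relations of NIST \S 13.3(i), keeping track of the powers of $\alpha z$. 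No issues there.

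The problem is your plan for the first relation displayed in \eqref{eqn_RecurrenceRelations_HypU_for_FQhz_stmt-v2}, the one containing $\FP_h(\alpha, R-\alpha; z)$. Both of your proposed routes (induction on $h$, or identifying $\FP_h$ with a ``companion'' solution) are aimed at the identity as literally printed, and that identity is false: at $h = 1$ one has $\FP_1(\alpha, R-\alpha; z) = 1$, $\FQ_1(\alpha, R; z) = 1 - Rz$, $\FQ_1(\alpha, R+\alpha; z) = 1 - (R+\alpha)z$, and the left-hand side collapses to $R(R-\alpha)z^2 \neq 0$. The occurrence of $\FP_h$ is a misprint for $\FQ_h(\alpha, R-\alpha; z)$; with that reading the relation is precisely the contiguous relation $(b-a-1)\,\HypU{a}{b-1}{w} + (1-b-w)\,\HypU{a}{b}{w} + w\,\HypU{a}{b+1}{w} = 0$ under the substitution $a \defmapsto -h$, $b \defmapsto R/\alpha$, $w \defmapsto (\alpha z)^{-1}$, multiplied through by $(\alpha z)^{h+1}$ --- that is, it is handled by the same one-line argument as the other five, which is how the paper's proof treats it (it cites \S 13.3(i) for all six recurrences uniformly). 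So neither the induction nor the companion-solution detour can close as stated, and the latter is independently shaky: the numerator convergents $\FP_h$ are the associated (numerator) polynomials of the Laguerre-type three-term recurrence and are not Laguerre polynomials or $\HypU{\cdot}{\cdot}{\cdot}$ values at a shifted parameter, so they do not fit the single-$U$ framework your reduction requires. The correct move is to test a small case, recognize the typo, and fold this relation into the same contiguous-relation catalogue as the rest.
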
 
\begin{proof} 
The first equation results from \textit{Kummer's transformation} 
for the confluent hypergeometric function, $\HypU{a}{b}{z}$, 
given by \cite[\S 13.2(vii)]{NISTHB} 
\begin{equation*} 
\HypU{a}{b}{z} = z^{1-b} \HypU{a-b+1}{2-b}{z}. 
\end{equation*} 
In particular, when $R \defequals \alpha s$ and $h+s-1 \geq 0$ 
Proposition \ref{prop_Closed-Form_Rep_for_ConvFn_Qhz} 
implies that 
\begin{align*} 
\FQ_h(\alpha, R; z) & = \left(\alpha z\right)^{h+R/\alpha - 1} 
     \HypU{-(h+\frac{R}{\alpha}-1)}{\frac{2\alpha-R}{\alpha}}{ 
     (\alpha z)^{-1}} \\ 
   & = 
     \FQ_{h+R/\alpha - 1}(\alpha, 2\alpha-R; z). 
\end{align*} 
The recurrence relations stated in 
\eqref{eqn_RecurrenceRelations_HypU_for_FQhz_stmt-v2} follow 
similarly as consequences of the first proposition by 
applying the known results for the 
confluent hypergeometric functions cited in the reference 
\cite[\S 13.3(i)]{NISTHB}. 
\end{proof} 

\begin{prop}[Addition and Multiplication Theorems] 
\label{cor_HypU_fn_FiniteSums_Involving_FQhz} 
Let $z, w \in \mathbb{C}$ with $z \neq w$ and suppose that $z \neq 0$. 
For a fixed $\alpha \in \mathbb{Z}^{+}$ and $h \geq 0$, the 
following finite sums provide two 
addition theorem analogs satisfied by the 
sequences of convergent denominator functions: 
\begin{align*} 
\tagonce\label{eqn_FQhaRz_HypU_FiniteSum_Idents_stmt} 
\FQ_h(\alpha, R; z-w) & = 
     \sum_{n=0}^{h} \frac{(-h)_n (-w)^n (z-w)^{h-n}}{z^{h} \cdot n!} 
     \FQ_{h-n}(\alpha, R+\alpha n; z) \\ 
\FQ_h(\alpha, R; z-w) & = 
     \sum_{n=0}^{h} \frac{(-h)_n \left(1-h-\frac{R}{\alpha}\right)_n 
     (\alpha w)^{n}}{n!} 
     \FQ_{h-n}(\alpha, R; z) \\ 
     & = 
     \sum_{n=0}^{h} \binom{h}{n} \binom{h+\frac{R}{\alpha}-1}{n} 
     \times (\alpha w)^{n} n! \times 
     \FQ_{h-n}(\alpha, R; z). 
\end{align*} 
The corresponding multiplication theorems for the 
denominator functions are stated similarly for $h \geq 0$ 
in the forms of the following equations: 
\begin{align*} 
\tagonce\label{eqn_FQhaRz_HypU_FiniteSum_Idents-v_MultThms_stmts} 
\FQ_h(\alpha, R; zw) & = 
     \sum_{n=0}^{h} \frac{(-h)_n (w-1)^n w^{h-n}}{n!} 
     \FQ_{h-n}(\alpha, R+\alpha n; z) \\ 
\FQ_h(\alpha, R; zw) & = 
     \sum_{n=0}^{h} \frac{(-h)_n \left(1-h-\frac{R}{\alpha}\right)_n 
     (1-w)^{n} (\alpha z)^{n}}{n!} 
     \FQ_{h-n}(\alpha, R; z) \\ 
     & = 
     \sum_{n=0}^{h} 
     \binom{h}{n} \binom{n-h-\frac{R}{\alpha}}{n} \times 
     \left(\alpha z (w-1)\right)^{n} n! \times 
     \FQ_{h-n}(\alpha, R; z) 
\end{align*} 
\end{prop}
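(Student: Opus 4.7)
The plan is to reduce all four identities to addition and multiplication formulas for the confluent hypergeometric function $\HypU{a}{b}{\cdot}$ via the identification $\FQ_h(\alpha, R; z) = (\alpha z)^{h}\, \HypU{-h}{R/\alpha}{(\alpha z)^{-1}}$ established in Proposition \ref{prop_Closed-Form_Rep_for_ConvFn_Qhz}. Setting $a \defequals -h$ and $b \defequals R/\alpha$, every expansion below terminates at $n = h$ because $a$ is a non-positive integer, so no convergence questions arise and we are working entirely with polynomial identities in $w$ (or in $z-w$, respectively $zw$).

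For the first addition theorem I would write $u \defequals (\alpha z)^{-1}$ and observe $1/(\alpha(z-w)) = u \cdot z/(z-w)$, so that
\begin{align*}
\FQ_h(\alpha, R; z-w) = (\alpha(z-w))^{h}\, \HypU{-h}{R/\alpha}{u \cdot z/(z-w)}.
\end{align*}
I would then invoke the terminating contiguous-relation expansion
\begin{align*}
\HypU{-h}{b}{u\xi} = \sum_{n=0}^{h} \binom{h}{n} (\xi-1)^{n} \xi^{h-n}\, \HypU{-(h-n)}{b+n}{u},
\end{align*}
evaluated at $\xi = z/(z-w)$ (so $\xi - 1 = w/(z-w)$). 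Multiplying through by $(\alpha(z-w))^{h}$ and regrouping the factors $(z-w)^{h-n}$, $w^{n}$, and $z^{-h}$ against $(\alpha z)^{h-n}\, \HypU{-(h-n)}{b+n}{u} = \FQ_{h-n}(\alpha, R+\alpha n; z)$ delivers the stated identity, once one uses $\binom{h}{n} = (-1)^{n} (-h)_{n}/n!$. The second addition theorem is obtained by the same route, but with the companion expansion that fixes the second $\HypU{}{}{}$ parameter,
\begin{align*}
\HypU{-h}{b}{u\xi} = \sum_{n=0}^{h} \binom{h}{n} \left(1-h-b\right)_{n} (1-\xi)^{n}\, \xi^{-n}\, u^{-n}\, \HypU{-(h-n)}{b}{u},
\end{align*}
again with $\xi = z/(z-w)$; the factor $(1-h-R/\alpha)_{n} (\alpha w)^{n}$ on the claimed right-hand side then emerges after collecting powers of $\alpha$, $w$, and $z-w$. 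The two multiplication theorems are produced identically by applying exactly these same two $\HypU{}{}{}$-expansions, but now starting from $\FQ_{h}(\alpha, R; zw) = (\alpha z w)^{h}\, \HypU{-h}{R/\alpha}{u/w}$ and taking $\xi = 1/w$, so that $\xi - 1 = (1-w)/w$ and $1 - \xi = (w-1)/w$ furnish the factors $(w-1)^{n} w^{h-n}$ and $(1-w)^{n} (\alpha z)^{n}$, respectively.

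The main obstacle is the bookkeeping of the prefactors $(\alpha(z-w))^{h}$ and $(\alpha z w)^{h}$: one must verify line-by-line that the powers of $\alpha$, of $z$, and of $(z-w)$ or $w$ shuffled between the prefactor and the $\HypU{}{}{}$-term reproduce exactly the prefactors on the claimed right-hand sides. A secondary concern is the second (fixed-$b$) expansion of $\HypU{-h}{b}{u\xi}$ used above: if I cannot locate it in the form required in \cite[\S 13.13]{NISTHB}, I would derive it independently by applying Kummer's transformation (see Corollary \ref{cor_HypU-Based_recurrences_for_FQhz}) to the fixed-second-parameter contiguous relation, or failing that, fall back to a direct polynomial computation using the explicit coefficient formula \eqref{eqn_PFact_Qhz_product_ident} for $\FQ_{h}(\alpha, R; \cdot)$: both sides are polynomials of degree $h$ in the perturbation variable ($w$ in each case), so matching the coefficient of $w^{n}$ reduces each identity to a finite Vandermonde-type sum of Pochhammer symbols that can be handled by standard hypergeometric summation. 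This polynomial fallback is mechanical but lengthier, so the $\HypU{}{}{}$-based derivation is preferred.
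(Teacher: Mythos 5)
Your overall plan coincides with the paper's proof: write $\FQ_h(\alpha,R;z)=(\alpha z)^{h}\,\HypU{-h}{R/\alpha}{(\alpha z)^{-1}}$ (Proposition \ref{prop_Closed-Form_Rep_for_ConvFn_Qhz}) and appeal to the terminating addition/multiplication expansions of the confluent hypergeometric $U$-function at $a=-h$, with the substitutions $x=(\alpha z)^{-1}$, $x+y=(\alpha(z-w))^{-1}$ (resp.\ $xy$). The genuine gap is that both expansion formulas you actually invoke are misstated, and the ``bookkeeping of prefactors'' you defer is exactly where this shows up. Your first expansion,
\begin{equation*}
\HypU{-h}{b}{u\xi} \overset{?}{=} \sum_{n=0}^{h}\binom{h}{n}(\xi-1)^{n}\xi^{h-n}\,\HypU{-(h-n)}{b+n}{u},
\end{equation*}
already fails at $h=1$: the left side is $u\xi-b$, the right side is $\xi(u-b)+(\xi-1)$, and they agree only if $\xi=1$ or $b=1$. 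The correct identity (DLMF 13.13.6, the one the paper uses, equivalently its $xy$-form) carries the factor $u^{n}(\xi-1)^{n}$ rather than $(\xi-1)^{n}\xi^{h-n}$; if you push your version through the regrouping you do not recover the stated identity but instead $\sum_{n}\binom{h}{n}(\alpha w)^{n}\FQ_{h-n}(\alpha,R+\alpha n;z)$, which is already false at $h=1$ (it forces $Rw=\alpha w$). Likewise your second, fixed-$b$ expansion should come with the prefactor $\left(x/(x+y)\right)^{-h}=\xi^{h}$, i.e.\ coefficient $\binom{h}{n}\left(1-h-b\right)_{n}(1-\xi)^{n}\xi^{h-n}$ (DLMF 13.13.7), not $(1-\xi)^{n}\xi^{-n}u^{-n}$; with your version the powers of $\alpha$ and of $z-w$ do not cancel and the claimed coefficient $\left(1-h-\tfrac{R}{\alpha}\right)_{n}(\alpha w)^{n}$ does not emerge.

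With the corrected formulas the computation does close exactly as in the paper: take $x=(\alpha z)^{-1}$, $y=w/(\alpha z(z-w))$, multiply through by $(\alpha(z-w))^{h}$, and use $\HypU{-(h-n)}{\cdot}{(\alpha z)^{-1}}=(\alpha z)^{-(h-n)}\FQ_{h-n}(\alpha,\cdot;z)$; the multiplication theorems follow from the same two DLMF identities under $y\mapsto(y-1)x$ so that $x+y\mapsto xy$. Your fallback—checking each identity as a polynomial identity in $w$ of degree $h$ via the explicit coefficients in \eqref{eqn_PFact_Qhz_product_ident}—is a viable rescue and would work, but it is not what your main derivation does, and as written the two key expansion lemmas on which that derivation rests are false, so the proposal as it stands does not prove the proposition.
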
 
\begin{proof}[Proof of the Addition Theorems] 
The sums stated in \eqref{eqn_FQhaRz_HypU_FiniteSum_Idents_stmt} 
follow from special cases of established addition theorems for the 
confluent hypergeometric function, $\HypU{a}{b}{x+y}$, cited in the 
reference \cite[\S 13.13(ii)]{NISTHB}. 
The particular addition theorems required in the proof are 
provided as follows: 
\begin{align} 
\label{eqn_AdditionMultThm_for_HypU_proof_stmts-v1} 
\HypU{a}{b}{x+y} & = \sum_{n=0}^{\infty} \frac{(a)_n (-y)^n}{n!} 
     \HypU{a+n}{b+n}{x},\ |y| < |x| \\ 
\notag 
\HypU{a}{b}{x+y} & = \left(\frac{x}{x+y}\right)^{a} \sum_{n=0}^{\infty} 
     \frac{(a)_n (1+a-b)_n y^n}{n! (x+y)^n} \HypU{a+n}{b}{x},\ 
     \Re[y / x] > -\frac{1}{2}. 
\end{align} 
First, 
observe that in the special case inputs to $\HypU{a}{b}{z}$ 
resulting from the application of 
Proposition \ref{prop_Closed-Form_Rep_for_ConvFn_Qhz} 
involving the functions 
$\FQ_h(\alpha, R; z) = \HypU{-h}{R / \alpha}{(\alpha z)^{-1}}$ 
in the infinite sums of 
\eqref{eqn_AdditionMultThm_for_HypU_proof_stmts-v1} 
lead to to finite sum identities corresponding to the inputs, $h$, to 
$\FQ_h(\alpha, R; z)$ where $h \geq 0$. 
More precisely, the definition of the convergent denominator sequences 
provided by \eqref{eqn_QFact_Qhz} requires that 
$\FQ_h(\alpha, R; z) = 0$ whenever $h < 0$. 

To apply the cited results for $\HypU{a}{b}{x+y}$ in these cases, 
let $z \neq w$, assume that both $\alpha, z \neq 0$, and 
suppose the parameters corresponding to $x$ and $y$ 
in \eqref{eqn_FQhaRz_HypU_FiniteSum_Idents_stmt} are defined so that 
\begin{equation} 
\label{eqn_AdditionMultThm_analogues_proof_stmt-v1} 
x \defequals (\alpha z)^{-1},\ 
y \defequals \frac{1}{\alpha}\left((z-w)^{-1} - z^{-1}\right),\ 
x + y = \left(\alpha (z-w)\right)^{-1}. 
\end{equation} 
Since each of the sums in \eqref{eqn_FQhaRz_HypU_FiniteSum_Idents_stmt} 
involve only finitely-many terms, 
we ignore treatment of the convergence conditions given on the 
right-hand-sides of the equations in 
\eqref{eqn_AdditionMultThm_for_HypU_proof_stmts-v1} 
to justify these two restatements of the addition theorem analogs 
provided above. 
\end{proof} 
\begin{proof}[Proof of the Multiplication Theorems] 
The second pair of identities stated in 
\eqref{eqn_FQhaRz_HypU_FiniteSum_Idents-v_MultThms_stmts} 
are formed by the 
multiplication theorems for $\HypU{a}{b}{z}$ noted as in the reference 
\cite[\S 13.13(iii)]{NISTHB}. 
The proof is derived similarly from the first parameter 
definitions of $x$ and $y$ given in the addition theorem proof, 
with an additional adjustment employed in these cases 
corresponding to the change of variable 
$\widehat{y} \mapsto (y-1) x$, which is selected so that 
$x + \widehat{y} \defmapsto xy$ in the above proof. 
The analog to 
\eqref{eqn_AdditionMultThm_analogues_proof_stmt-v1} 
that results in these two cases then 
yields the parameters, 
$x \defequals (\alpha z)^{-1}$ and $y \defequals (w^{-1}-1) \cdot (\alpha z)^{-1}$, 
in the first identities for the 
confluent hypergeometric function, $\HypU{a}{b}{x+y}$, 
given by \eqref{eqn_AdditionMultThm_for_HypU_proof_stmts-v1}. 
\end{proof} 

\begin{remark} 
The expansions of the addition and multiplication theorem analogs to the 
established relations for the confluent hypergeometric function, 
$\HypU{a}{b}{w}$, are also compared to the known expansions of the 
\emph{duplication formula} for the associated Laguerre polynomial 
sequence stated in the following form 
\cite[\S 5.1]{UC} \cite[\cf \S 18.18(iii)]{NISTHB}: 
\begin{align*} 
h! \times L_h^{(\beta)}(wx) & = 
     \sum_{k=0}^{h} \binom{h+\beta}{h-k} \left(\frac{h!}{k!}\right)^{2} 
     \times w^{k} (1-w)^{h-k} \times k! L_k^{(\beta)}(x). 
\end{align*} 
The second expansion of the convergent denominator functions, 
$\ConvFQ{h}{\alpha}{R}{z}$, by the 
confluent hypergeometric function, $\HypM{a}{b}{w}$, stated in 
\eqref{eqn_PFact_Qhz_Mident} of the first proposition in this section 
also suggests additional identities for these sequences generated by the 
multiplication formula analogs in 
\eqref{eqn_FQhaRz_HypU_FiniteSum_Idents-v_MultThms_stmts} 
when the parameter $z \defmapsto \pm 1 / \alpha$, for example, as in the 
simplified cases of the identities expanded in the references 
(\cite[\S 5.5 - \S 5.6; Ex.\ 5.29]{GKP}, \cite[\cf \S 15]{NISTHB}). 
\end{remark} 

\subsection{The convergent numerator function sequences} 
\label{subsubSection_Properties_Of_ConvFn_Phz} 

The most direct expansion of the convergent numerator functions, 
$\FP_h(\alpha, R; z)$, is obtained from the 
\emph{erasing operator}, defined as in Flajolet's first article, 
which performs the formal power series 
truncation operation defined by the next equation 
\cite[\S 3]{FLAJOLET80B}. 
\begin{align*} 
\tagtext{Erasing Operator} 
\E_m\left\llbracket\sum_{i} g_i z^i\right\rrbracket 
     & \defequals 
     \sum_{i \leq m} g_i z^{i} 
\end{align*} 
The numerator polynomials are then given through this notation by the 
expansions in the following equations: 
\begin{align*} 
\FP_h(\alpha, R; z) & = 
     \E_{h-1} \Bigl\llbracket 
     \FQ_h(\alpha, R; z) \cdot \ConvGF{h}{\alpha}{R}{z} 
     \Bigr\rrbracket \\ 
     & = 
     \sum_{n=0}^{h-1} 
     \undersetbrace{ 
     C_{h,n}(\alpha, R) \defequals [z^{n}] \ConvFP{h}{\alpha}{R}{z}}{ 
     \left( 
     \sum_{i=0}^{n} 
     [z^{i}] \ConvFQ{h}{\alpha}{R}{z} \times 
     \pn{n-i}{\alpha}{R} 
     \right) 
     } \times z^{n}. 
\end{align*} 
The coefficients of $z^{n}$ 
expanded in the last equation are rewritten slightly in terms of 
\eqref{eqn_CoeffsOfzk_FQhaRz_restmts_by_GenProductSeqs-v1} and the 
Pochhammer symbol representations of the product sequences, 
$\pn{n}{\alpha}{R}$, to arrive at a pair of formulas 
expanded as follows: 
\begin{subequations} 
\label{eqn_Vandermonde-like_PHSymb_exps_of_PhzCfs} 
\begin{align} 
\label{eqn_Chn_formula_stmt_v1} 
C_{h,n}(\alpha, R) 
     & = 
     \sum_{i=0}^{n} \binom{h}{i} (-1)^{i} 
     \pn{i}{-\alpha}{R + (h-1) \alpha} 
     \pn{n-i}{\alpha}{R}, && 
     h > n \geq 0 \\ 
\label{eqn_Vandermonde-like_PHSymb_exps_of_PhzCfs-stmt_v1} 
     & = 
     \sum_{i=0}^{n} \binom{h}{i} 
     \Pochhammer{1-h-R / \alpha}{i} 
     \Pochhammer{R / \alpha}{n-i} \times \alpha^{n}, && 
     h > n \geq 0. 
\end{align} 
\end{subequations} 
These sums are remarkably similar in form to the next 
binomial-type convolution formula, or \emph{Vandermonde identity}, 
stated as follows \cite[\S 1.13(I)]{ADVCOMB} 
\cite{WOLFRAMFNSSITE-INTRO-FACTBINOMS} 
\cite[\S 1.2.6, Ex.\ 34]{TAOCPV1}: 
\begin{align*} 
\tagtext{Vandermonde Convolution} 
\Pochhammer{x+y}{n} & = 
     \sum_{i=0}^{n} \binom{n}{i} \Pochhammer{x}{i} \Pochhammer{y}{n-i} \\ 
     & = 
     \sum_{i=0}^{n} \binom{n}{i} x \cdot \Pochhammer{x-iz+1}{i-1} 
     \Pochhammer{y+iz}{n-i},\ x \neq 0. 
\end{align*} 
A separate treatment of other properties implicit to the 
more complicated expansions of these convergent function 
subsequences is briefly 
explored through the definitions of the three additional forms of 
auxiliary coefficient sequences, denoted in respective order by 
$C_{h,n}(\alpha, R)$, $R_{h,k}(\alpha; z)$, and $T_h^{(\alpha)}(n, k)$, 
considered in the next subsection below. 

\begin{remark}[Reflected Convergent Numerator Function Sequences] 
The special cases of the reflected numerator polynomials given in 
\tableref{table_RelfectedConvNumPolySeqs_sp_cases} also 
suggest a consideration of the numerator convergent functions 
factored with respect to powers of $\pm (z-R)$ 
by expanding these sequences with respect to 
another formal auxiliary variable, $w$, when $R \defmapsto z \mp w$. 
The tables contained in the attached summary notebook 
\cite{SUMMARYNBREF-STUB} provide working \Mm{} 
code to expand and factor these modified forms of the 
reflected numerator polynomial sequences employed in stating the 
generalized congruence results for the $\alpha$-factorial functions, 
$\MultiFactorial{n}{\alpha}$, from the examples cited in 
Section \ref{subsubSection_Examples_NewCongruences}, and 
more generally by the results proved in 
Section \ref{subSection_NewCongruence_Relations_Modulo_Integer_Bases}, 
satisfied by the $\alpha$-factorial functions and 
generalized product sequence expansions modulo integers $p \geq 2$. 
\end{remark} 

\subsubsection{Alternate forms of the 
               convergent numerator function subsequences}
\label{subsubSection_Properties_Of_ConvFn_Phz-AuxNumFn_Subsequences} 

The next results summarize three semi-triangular 
recurrence relations satisfied by the 
particular variations of the numerator function subsequences considered, 
respectively, as polynomials with respect to $z$ and $R$. 
For $h \geq 2$, fixed $\alpha \in \mathbb{Z}^{+}$, and 
$n, k \geq 0$, we consider the following forms of these auxiliary 
numerator coefficient subsequences: 
\begin{align*} 
\tagonce\label{eqn_Chk_aux_numerator_subseqs-def_v1}
C_{h,n}(\alpha, R) & \defequals [z^n] \ConvFP{h}{\alpha}{R}{z},\ 
       \text{ for \ } 
       0 \leq n \leq h-1 \\ 
       & \phantom{:} = 
C_{h-1,n}(\alpha, R) - (R+2\alpha(h-1)) C_{h-1,n-1}(\alpha, R) \\ 
     & \phantom{= \quad} - 
     \alpha(R+\alpha(h-2))(h-1) C_{h-2,n-2}(\alpha, R) \\ 
R_{h,k}(\alpha; z) & \defequals [R^k] \ConvFP{h}{\alpha}{R}{z},\ 
       \text{ for \ } 
       0 \leq k \leq h-1 \\ 
       & \phantom{:} = 
     (1-2\alpha(h-1)z)R_{h-1,k}(\alpha; z) - 
     \alpha^2(h-1)(h-2)z^2 R_{h-2,k}(\alpha; z) \\ 
       & \phantom{\defequals\ } - 
     zR_{h-1,k-1}(\alpha; z) - 
     \alpha(h-1)z^2 R_{h-2,k-1}(\alpha; z) \\ 
T_h^{(\alpha)}(n, k) & \defequals [z^n R^k] \ConvFP{h}{\alpha}{R}{z},\ 
       \text{ for \ } 
       0 \leq n, k \leq h-1 \\ 
       & \phantom{:}= 
T_{h-1}^{(\alpha)}(n, k) -T_{h-1}^{(\alpha)}(n-1, k-1) - 
     2\alpha (h-1) T_{h-1}^{(\alpha)}(n-1 ,k) \\ 
   & \phantom{\defequals\ } - 
     \alpha (h-1) T_{h-2}^{(\alpha)}(n-2, k-1) - 
     \alpha^2 (h-1)(h-2) T_{h-2}^{(\alpha)}(n-2, k) \\ 
   & \phantom{\defequals\ } + 
     \left([z^n R^0] \FP_h(z)\right) 
     \Iverson{h \geq 1} \Iverson{n \geq 0} \Iverson{k = 0}. 
\end{align*} 
Each of the recurrence relations for the triangles 
cited in the previous equations 
are derived from \eqref{eqn_PFact_Phz} by a 
straightforward application of the coefficient extraction method 
first motivated in \cite{MULTIFACTJIS}. 
\tableref{table_ConvNumFnSeqs_Chn_AlphaR_SpCaseListings} and 
\tableref{table_ConvNumFnSeqs_Rhk_Alphaz_SpCaseListings} 
list the first few special cases of the first two 
auxiliary forms of these component polynomial subsequences. 

We also state, without proof, 
a number of multiple, alternating sums involving the Stirling number 
triangles that generate these auxiliary subsequences 
for reference in the next several equations. 
In particular, for $h \geq 1$ and $0 \leq n < h$, the sequences, 
$C_{h,n}(\alpha, R)$, are expanded by the following sums: 
\begin{subequations} 
\label{eqn_Chn_formula_stmts} 
\begin{align} 
\label{eqn_Chn_formula_stmts-exp_v1}
C_{h,n}(\alpha, R) & = 
     \sum\limits_{\substack{0 \leq m \leq k \leq n \\ 
                            0 \leq s \leq n} 
                 } 
     \left( 
     \binom{h}{k} \binom{m}{s} \gkpSI{k}{m} (-1)^{m} \alpha^{n} 
     \Pochhammer{\frac{R}{\alpha}}{n-k} 
     \left(\frac{R}{\alpha} - 1\right)^{m-s} 
     \right) \times h^{s} \\ 
\label{eqn_Chn_formula_stmts-exp_v2}
  & = 
     \sum\limits_{\substack{0 \leq m \leq k \leq n \\ 
                            0 \leq t \leq s \leq n} 
                 } 
     \left( 
     \binom{h}{k} \binom{m}{t} \gkpSI{k}{m} \gkpSI{n-k}{s-t} 
     (-1)^{m} \alpha^{n-s} (h-1)^{m-t} 
     \right) \times R^{s} \\ 
\label{eqn_Chn_formula_stmts-exp_v3}
   & = 
     \sum\limits_{\substack{0 \leq m \leq k \leq n \\ 
                            0 \leq i \leq s \leq n} 
                 } 
     \binom{h}{k} \binom{h}{i} \binom{m}{s} \gkpSI{k}{m} \gkpSII{s}{i} 
     (-1)^{m} \alpha^{n} 
     \Pochhammer{\frac{R}{\alpha}}{n-k} 
     \left(\frac{R}{\alpha} - 1\right)^{m-s} \times i! \\ 
\label{eqn_Chn_formula_stmts-exp_v4}
   & = 
     \sum\limits_{\substack{0 \leq m \leq k \leq n \\ 
                            0 \leq v \leq i \leq s \leq n} 
                 } 
     \binom{h}{k} \binom{m}{s} \binom{i}{v} \binom{h+v}{v} 
     \gkpSI{k}{m} \gkpSII{s}{i} (-1)^{m+i-v} \alpha^{n} \times \\ 
\notag 
     & \phantom{= \sum \binom{h}{k} \quad } \times 
     \Pochhammer{\frac{R}{\alpha}}{n-k} 
     \left(\frac{R}{\alpha} - 1\right)^{m-s} \times i!. 
\end{align} 
Since the powers of $R$ in the second identity 
are expanded by the Stirling numbers of the second kind as 
\cite[\S 6.1]{GKP} 
\begin{align*} 
R^{p} & = \alpha^{p} \times 
     \sum_{i=0}^{p} \gkpSII{p}{i} (-1)^{p-i} \Pochhammer{\frac{R}{\alpha}}{i}, 
\end{align*} 
for all natural numbers $p \geq 0$, the multiple sum identity in 
\eqref{eqn_Chn_formula_stmts-exp_v2} also implies the next finite multiple sum 
expansion for these auxiliary coefficient subsequences 
(see Table \ref{table_ConvNumFnSeqs_Chn_AlphaR_SpCaseListings}). 
\begin{align} 
\label{eqn_Chn_formula_stmts-exp_v5}
C_{h,n}(\alpha, R) & = 
     \sum_{i=0}^{n} 
     \underset{\mathlarger{\text{polynomial function of $h$ only } \defequals 
               \frac{(-1)^{n} m_{n,h}}{n!} \times \binom{n}{i} p_{n,i}(h)}}{ 
               \underbrace{ 
     \left( 
     \sum\limits_{\substack{0 \leq m \leq k \leq n \\ 
                            0 \leq t \leq s \leq n} 
                 } 
     \binom{h}{k} \binom{m}{t} \gkpSI{k}{m} \gkpSI{n-k}{s-t} \gkpSII{s}{i} 
     (-1)^{m+s-i} (h-1)^{m-t} 
     \right) 
     } 
     } 
     \times \alpha^{n} \Pochhammer{\frac{R}{\alpha}}{i} 
\end{align} 
\end{subequations} 
Similarly, for all $h \geq 1$ and $0 \leq k < h$, the sequences, 
$R_{h,k}(\alpha, R)$, are expanded as follows: 
\begin{align} 
\label{eqn_Rhk_formula_stmts} 
R_{h,k}(\alpha; z) & = 
     \sum\limits_{\substack{0 \leq m \leq i \leq n < h \\ 
                            0 \leq t \leq k} 
                 } 
     \left( 
     \binom{h}{i} \binom{m}{t} \gkpSI{i}{m} \gkpSI{n-i}{k-t} 
     (-1)^{m} \alpha^{n-k} (h-1)^{m-t} 
     \right) \times z^{n} \\ 
\notag 
   & = 
     \sum\limits_{\substack{0 \leq m \leq i \leq n < h \\ 
                            0 \leq t \leq k \\ 0 \leq p \leq m-t} 
                 } 
     \left( 
     \binom{h}{i} \binom{m}{t} \binom{h-1}{p} 
     \gkpSI{i}{m} \gkpSI{n-i}{k-t} \gkpSII{m-t}{p} 
     (-1)^{m} \alpha^{n-k} \times p! 
     \right) \times z^{n}. 
\end{align} 
A more careful immediate 
treatment of the properties satisfied by these subsequences is omitted 
from this section for brevity. 
A number of the new congruence results cited in the next sections do, 
at any rate, 
have alternate expansions given by the more involved termwise structure 
implicit to these finite multiple sums modulo some 
application-specific prescribed functions of $h$. 

\section{Applications and motivating examples} 
\label{Section_Apps_and_Examples} 

\subsection{Lemmas} 
\label{subSection_Apps_and_Examples_StmtsOfLemmas} 

We require the next lemma 
to formally enumerate the generalized products and factorial function sequences 
already stated without proof in the examples from 
Section \ref{subSection_Intro_Examples}. 

\begin{lemma}[Sequences Generated by the Generalized Convergent Functions] 
\label{lemma_GenConvFn_EnumIdents_pnAlphaRSeq_idents_combined_v1} 
For fixed integers $\alpha \neq 0$, $0 \leq d < \alpha$, and each 
$n \geq 1$, the generalized $\alpha$-factorial sequences 
defined in \eqref{eqn_nAlpha_Multifact_variant_rdef} 
satisfy the following expansions by the generalized products in 
\eqref{eqn_GenFact_product_form}: 
\begin{align} 
(\alpha n - d)!_{(\alpha)} 
     & = p_n(-\alpha, \alpha n-d) \\ 
     & = p_n(\alpha, \alpha -d) \\ 
\label{eqn_MultFactFn_ConvSeq_def-stmts_v1.c} 
n!_{(\alpha)} 
     & = p_{\lfloor (n+\alpha-1) / \alpha \rfloor}(-\alpha, n). 
\end{align} 
\end{lemma}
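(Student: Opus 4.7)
The plan is to unfold both sides using only the definitions \eqref{eqn_GenFact_product_form} and \eqref{eqn_nAlpha_Multifact_variant_rdef}, with the three claims reducing to essentially the same calculation. First I would expand
\[
p_n(-\alpha, \alpha n - d) = \prod_{j=0}^{n-1}\bigl((\alpha n - d) + (-\alpha)j\bigr) = \prod_{j=0}^{n-1}\bigl(\alpha(n-j) - d\bigr),
\]
which is the product $(\alpha n - d)(\alpha(n-1)-d)\cdots (\alpha - d)$. Reindexing $j \mapsto n-1-j$ rewrites the same product as $\prod_{j=0}^{n-1}\bigl((\alpha - d) + \alpha j\bigr) = p_n(\alpha, \alpha - d)$, which immediately gives the equality of the first two right-hand sides.

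Next I would match this product against the multifactorial. Iterating the top case of \eqref{eqn_nAlpha_Multifact_variant_rdef} peels off factors of $(\alpha n - d), (\alpha(n-1)-d), \ldots$ as long as the argument remains positive; since $0 \leq d < \alpha$, the argument stays $\geq \alpha - d \geq 1$ through exactly $n$ factors and then drops to $-d \in (-\alpha, 0]$, where the middle case of \eqref{eqn_nAlpha_Multifact_variant_rdef} terminates the recursion with a value of $1$. Thus
\[
(\alpha n - d)!_{(\alpha)} = (\alpha n - d)(\alpha(n-1)-d)\cdots (\alpha - d),
\]
which matches the product computed above and establishes the first two lines.

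For \eqref{eqn_MultFactFn_ConvSeq_def-stmts_v1.c} I would perform the standard reduction to the previous case by writing any $n \geq 1$ uniquely as $n = \alpha q - d$ with $0 \leq d < \alpha$ and $q \geq 1$. The identity $\lceil n/\alpha \rceil = \lfloor (n+\alpha-1)/\alpha \rfloor$ gives $q = \lfloor (n+\alpha-1)/\alpha \rfloor$, and substituting into the first equation of the lemma yields
\[
n!_{(\alpha)} = (\alpha q - d)!_{(\alpha)} = p_q(-\alpha, \alpha q - d) = p_{\lfloor (n+\alpha-1)/\alpha \rfloor}(-\alpha, n),
\]
as desired.

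There is no real obstacle here; the only point that requires care is the termination of the recursion in \eqref{eqn_nAlpha_Multifact_variant_rdef}, which I would verify explicitly by checking that after $n$ steps of the recursion the argument is exactly $-d$ with $-\alpha < -d \leq 0$, so the middle case (and not the \quotetext{otherwise $= 0$} case) applies. The boundary $d = 0$, where $-d = 0$, still falls in $-\alpha < -d \leq 0$ since $\alpha \geq 1$, so no separate case is needed.
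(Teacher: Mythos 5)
Your proof is correct and follows essentially the same route as the paper's: unfold the recursion \eqref{eqn_nAlpha_Multifact_variant_rdef} into the explicit product $(\alpha n-d)(\alpha(n-1)-d)\cdots(\alpha-d)$, reindex it to obtain $p_n(\alpha,\alpha-d)$, and count the $\lfloor (n+\alpha-1)/\alpha\rfloor$ surviving factors for the last identity. The only cosmetic difference is that the paper establishes the third product representation directly by induction on the definition, whereas you reduce it to the first identity via the unique decomposition $n=\alpha q-d$ with $q=\lceil n/\alpha\rceil=\lfloor (n+\alpha-1)/\alpha\rfloor$; both amount to the same computation.
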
 
\begin{proof} 
The related cases of each of these identities cited in the 
equations above correspond to proving to the 
equivalent expansions of the product-wise representations for the 
$\alpha$-factorial functions given in each of the next equations: 
\begin{align*} 
\label{eqn_MultFactFn_ConvSeq_def-proof_stmts_v1.i} 
\tag{a} 
\AlphaFactorial{\alpha n-d}{\alpha} & = 
     \prod_{j=0}^{n-1} \left(\alpha n - d - j\alpha\right) \\ 
\label{eqn_MultFactFn_ConvSeq_def-proof_stmts_v1.ii} 
\tag{b} 
   & = 
     \prod_{j=0}^{n-1} \left(\alpha - d + j \alpha\right) \\ 
\tag{c} 
\label{eqn_MultFactFn_ConvSeq_def-proof_stmts_v2.iii} 
\MultiFactorial{n}{\alpha} & = 
     \prod\limits_{j=0}^{\left\lfloor (n+\alpha-1) / \alpha 
     \right\rfloor - 1} (n-i\alpha). 
\end{align*} 
The first product in \eqref{eqn_MultFactFn_ConvSeq_def-proof_stmts_v1.i} 
is easily obtained from \eqref{eqn_nAlpha_Multifact_variant_rdef} by 
induction on $n$, which then implies the second result in 
\eqref{eqn_MultFactFn_ConvSeq_def-proof_stmts_v1.ii}. 
Similarly, an inductive argument applied to the definition provided by 
\eqref{eqn_nAlpha_Multifact_variant_rdef} 
proves the last product representation given in 
\eqref{eqn_MultFactFn_ConvSeq_def-proof_stmts_v2.iii}. 
\end{proof} 

\begin{proof}[Corollaries] 
The proof of the lemma provides immediate corollaries 
to the special cases of the $\alpha$-factorial functions, 
$(\alpha n-d)!_{(\alpha)}$, expanded by the results from 
\eqref{eqn_MultFactFn_ConvSeq_def_v1}. 
We explicitly state the 
following particular special cases of the lemma corresponding to 
$d \defequals 0$ in \eqref{eqn_AlphaFactFn_anm1_SpCase_SeqIdents-stmts_v0} 
below for later use in 
Section \ref{subsubSection_Apps_ArithmeticProgs_of_the_SgFactFns} 
of the article: 
\begin{align} 
\label{eqn_AlphaFactFn_anm1_SpCase_SeqIdents-stmts_v0} 
(\alpha n)!_{(\alpha)} 
     & = \alpha^{n} \cdot \Pochhammer{1}{n} 
       = [z^n] \ConvGF{n+n_0}{-\alpha}{\alpha n}{z},\ 
       \forall n_0 \geq 0 \\ 
\notag 
    & = \alpha^{n} \cdot n! 
       \phantom{(_{n}} = 
       [z^n] \ConvGF{n+n_0}{-1}{n}{\alpha z},\ 
       \forall n_0 \geq 0. 
       \qedhere 
\end{align} 
\end{proof} 

\begin{remark}[Generating Floored Arithmetic Progressions of $\alpha$-Factorial Functions] 
\label{remark_Lemmas_ArithmeticProgGFs_nOvermFloored} 
Lemma \ref{lemma_GenConvFn_EnumIdents_pnAlphaRSeq_idents_combined_v1} 
provides proofs of the convergent-function-based 
generating function identities enumerating the $\alpha$-factorial 
sequences given in 
\eqref{eqn_MultFactFn_ConvSeq_def_v1} and 
\eqref{eqn_MultFactFn_ConvSeq_def_v2} of the introduction. 
The last convergent-based generating function identity that enumerates the 
$\alpha$-factorial functions, $\MultiFactorial{n}{\alpha}$, when 
$n > \alpha$ expanded in the form of 
\eqref{eqn_MultFactFn_ConvSeq_def_v3} from 
Section \ref{subSection_Intro_Examples} 
follows from the product function expansions provided in 
\eqref{eqn_MultFactFn_ConvSeq_def-stmts_v1.c} of the lemma by 
applying a result proved in the exercises section of the reference 
\cite[\S 7, Ex.\ 7.36; p.\ 569]{GKP}. 
In particular, for any fixed $m \geq 1$ and some sequence, 
$\left(a_n\right)_{n \geq 0}$, a generating function for the 
modified sequences, $\left(a_{\lfloor n/m \rfloor}\right)_{n \geq 0}$, is 
given by 
\begin{align*} 
\widehat{A}_m(z) & \defequals 
     \sum_{n \geq 0} a_{\lfloor \frac{n}{m} \rfloor} z^{n} = 
     \frac{1-z^{m}}{1-z} \times \widetilde{A}\left(z^{m}\right) = 
     \left(1 + z + \cdots + z^{m-2} + z^{m-1}\right) \times 
     \widetilde{A}\left(z^{m}\right), 
\end{align*} 
where $\widetilde{A}(z) \defequals \sum_{n} a_n z^{n}$ denotes the 
ordinary power series generating function formally enumerating the 
prescribed sequence over $n \geq 0$ 
(compare to Remark \ref{remark_GFArithmeticProg_formulas} in 
Section \ref{subsubSection_SpCaseArithProgsOfSgFactFn_alphaEq45}). 
\end{remark} 

\begin{lemma}[Identities and Other Formulas Connecting Pochhammer Symbols] 
\label{lemma_footnote_PHSymbol_BinomIdents} 
The next equations provide statements of several known identities 
from the references involving the 
falling factorial function, the Pochhammer symbol, 
or rising factorial function, and the 
binomial coefficients required by the applications given in the 
next sections of the article. 
\begin{enumerate} 

\item \itemlabel{Relations between rising and falling factorial functions} 
The following identities provide known relations between the 
rising and falling factorial functions for fixed $x \neq \pm 1$ and 
integers $m,n \geq 0$ where the coefficients, 
$L(n, k) = \binom{n-1}{k-1} \frac{n!}{k!}$, in the first connection 
formula denote the \emph{Lah numbers} (\seqnum{A105278}): 
\begin{align*} 
\tagtext{Connection Formulas} 
\FFactII{x}{n} 
     & = 
     \sum_{k=1}^{n} 
     \binom{n-1}{k-1} \frac{n!}{k!} \times \Pochhammer{x}{k} \\ 
     & = 
     (-1)^{n} \Pochhammer{-x}{n} = 
     \Pochhammer{x-n+1}{n} = 
     \frac{1}{\RFactII{(x+1)}{-n}} \\ 
\Pochhammer{x}{n} 
     & = 
     \sum_{k=0}^{n} 
     \binom{n}{k} \FFactII{(n-1)}{n-k} \times \FFactII{x}{k} \\ 
     & = 
     (-1)^{n} \FFactII{(-x)}{n} \\ 
     & = 
     \FFactII{(x+n-1)}{n} = 
     \frac{1}{\FFactII{(x-1)}{-n}} \\ 
\tagtext{Generalized Exponent Laws} 
\FFactII{x}{m+n} 
     & = 
     \FFactII{x}{m} \FFactII{(x-m)}{n} \\ 
\RFactII{x}{m+n} 
     & = 
     \RFactII{x}{m} \RFactII{(x+m)}{n} \\  
\tagtext{Negative Rising and Falling Powers} 
\RFactII{x}{-n} 
     & = 
     \frac{1}{\Pochhammer{x-n}{n}} = 
     \frac{1}{\FFactII{(x-1)}{n}} \\ 
\FFactII{x}{-n} 
     & = 
     \frac{1}{\Pochhammer{x+1}{n}} \\ 
     & = 
     \frac{1}{n! \cdot \binom{x+n}{n}} = 
     \frac{1}{(x+1)(x+2) \cdots (x+n)}. 
\end{align*} 

\item \itemlabel{Expansions of polynomial powers by the 
                 Stirling numbers of the second kind} 
For any fixed $x \neq 0$ and integers $n \geq 0$, the 
polynomial powers of $x^n$ are expanded as follows: 
\begin{align*} 
\tagtext{Expansions of Polynomial Powers} 
x^{n} 
     & = 
     \sum_{k=0}^{n} \gkpSII{n}{n-k} \FFactII{x}{n-k} = 
     \sum_{k=0}^{n} \gkpSII{n}{k} (-1)^{n-k} \RFactII{x}{n}. 
\end{align*} 

\item \itemlabel{Binomial coefficient identities expanded by the 
                 Pochhammer symbol} 
For fixed $x \neq 0$ and integers $n \geq 1$, the 
binomial coefficients are expanded by 
\begin{align*} 
\tagtext{Binomial Coefficient Identities} 
\Pochhammer{x}{n} 
     & = 
     \binom{-x}{n} \times (-1)^{n} n! \\ 
     & = 
     \binom{x+n-1}{n} \times n!. 
\end{align*} 

\item \itemlabel{Connection formulas for products and 
                 ratios of Pochhammer symbols} 
The next identities connecting products and ratios of the 
Pochhammer symbols are stated 
for integers $m, n, i \geq 0$ and fixed $x \neq 0$. 
\begin{align*} 
\tagtext{Connection Formulas for Products} 
\Pochhammer{x}{n} \Pochhammer{x}{m} 
     & = 
     \sum_{k=0}^{\min(m, n)} \binom{m}{k} \binom{n}{k} k! \cdot 
     \Pochhammer{x}{m+n-k}, n \neq m \\ 
\tagtext{Fractions of Pochhammer Symbols} 
\frac{\Pochhammer{x}{n}}{\Pochhammer{x}{i}} 
     & = 
     \Pochhammer{x+i}{n-i},\ n \geq i 
\end{align*} 

\end{enumerate} 
\end{lemma}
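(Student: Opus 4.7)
The plan is to dispatch the identities one group at a time, since each is classical and follows from the definitions, a short induction, or a generating-function manipulation. I would not attempt a single unified argument; instead I would exploit the fact that once a few base identities are in place, the rest follow formally.

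First I would establish the connection formulas between $\FFactII{x}{n}$ and $\Pochhammer{x}{n}$. The sign reversal $\FFactII{x}{n} = (-1)^{n} \Pochhammer{-x}{n}$ is immediate from the product $x(x-1)\cdots(x-n+1) = (-1)^{n}(-x)(-x+1)\cdots(-x+n-1)$, and the reflection $\FFactII{x}{n} = \Pochhammer{x-n+1}{n}$ comes from reversing the order of the factors. The Lah-number expansion $\FFactII{x}{n} = \sum_{k} \binom{n-1}{k-1} \tfrac{n!}{k!} \Pochhammer{x}{k}$ and its inverse I would prove by induction on $n$, using the one-step recurrences $\FFactII{x}{n+1} = (x-n) \FFactII{x}{n}$ and $\Pochhammer{x}{n+1} = (x+n) \Pochhammer{x}{n}$ together with the Lah recurrence $L(n+1,k) = (n+k) L(n,k) + L(n,k-1)$; alternatively, the EGF identity $\sum_{n} \Pochhammer{x}{n} z^{n}/n! = (1-z)^{-x}$ paired with $\sum_{n} \FFactII{x}{n} z^{n}/n! = (1+z)^{x}$ gives the connection via the substitution $z \mapsto z/(1-z)$.

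Next, the generalized exponent laws $\FFactII{x}{m+n} = \FFactII{x}{m} \FFactII{(x-m)}{n}$ and $\RFactII{x}{m+n} = \RFactII{x}{m} \RFactII{(x+m)}{n}$ follow directly by splitting the defining product at index $m$. The negative-exponent conventions $\RFactII{x}{-n} = 1/\Pochhammer{x-n}{n}$ and $\FFactII{x}{-n} = 1/\Pochhammer{x+1}{n}$ are just the standard extensions using $\Gamma$-function ratios, and are forced by the exponent laws applied with $m+n=0$. The Stirling-second-kind expansion of $x^{n}$ I would derive from the recurrence $x \cdot \FFactII{x}{k} = \FFactII{x}{k+1} + k\, \FFactII{x}{k}$ by induction on $n$, mirroring the familiar proof in \emph{Concrete Mathematics}; the rising-factorial version follows by applying the sign reversal already proved.

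For the binomial-coefficient identities $\Pochhammer{x}{n} = (-1)^{n} n! \binom{-x}{n} = n! \binom{x+n-1}{n}$, I would simply unfold the definition $\binom{y}{n} = \FFactII{y}{n}/n!$ and apply the reflection formulas already in hand. The last substantial identity is the product connection $\Pochhammer{x}{n} \Pochhammer{x}{m} = \sum_{k} \binom{m}{k} \binom{n}{k} k!\, \Pochhammer{x}{m+n-k}$, which I would prove by recognising both sides as hypergeometric-type expressions and appealing to the Vandermonde--Chu convolution, or equivalently by extracting coefficients in $(1-z)^{-x}(1-z)^{-x} = (1-z)^{-2x}$ after suitable reindexing; the ratio formula $\Pochhammer{x}{n}/\Pochhammer{x}{i} = \Pochhammer{x+i}{n-i}$ is then an immediate corollary of the rising-factorial exponent law. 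The only real obstacle is bookkeeping of signs and of the shift conventions $x \mapsto x \pm n \mp 1$; none of the identities requires new machinery beyond the definitions, so the proof is essentially a careful catalogue rather than a single argument.
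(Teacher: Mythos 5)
Your proposal is correct in substance, but it takes a genuinely different route from the paper: the paper's ``proof'' of this lemma consists entirely of citations --- it points to Roman's \emph{Umbral Calculus}, \emph{Concrete Mathematics}, Comtet, Knuth's convolution-polynomials paper, and the Wolfram functions site for the respective groups of identities, and derives nothing --- whereas you actually prove each identity from the definitions (splitting the defining product for the exponent laws, sign and factor reversal for the connection formulas, induction on $x \cdot \FFactII{x}{k} = \FFactII{x}{k+1} + k\,\FFactII{x}{k}$ for the Stirling expansion, Chu--Vandermonde for the product linearization, and the exponent law for the ratio). What the paper's approach buys is brevity, which is reasonable for a catalogue of classical bookkeeping facts; what yours buys is a self-contained verification, and it would in fact expose two sign slips in the printed statement: the Lah-number connection formula needs the factor $(-1)^{n-k}$ (your EGF substitution produces it automatically), and the product formula $\Pochhammer{x}{n}\Pochhammer{x}{m} = \sum_{k}\binom{m}{k}\binom{n}{k}k!\,\Pochhammer{x}{m+n-k}$ holds as written for \emph{falling} factorials, while the rising-factorial version requires an extra $(-1)^{k}$, as the case $m=n=1$ already shows. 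One small correction to your sketch: extracting coefficients from $(1-z)^{-x}(1-z)^{-x}=(1-z)^{-2x}$ gives the Vandermonde convolution $\Pochhammer{2x}{n}=\sum_{k}\binom{n}{k}\Pochhammer{x}{k}\Pochhammer{x}{n-k}$, not the linearization of $\Pochhammer{x}{m}\Pochhammer{x}{n}$; the route that works is to apply $\Pochhammer{x+y}{n}=\sum_{k}\binom{n}{k}\Pochhammer{x}{k}\Pochhammer{y}{n-k}$ with $x\mapsto -m$, $y\mapsto x+m$ and then use $\Pochhammer{x}{m+n-k}=\Pochhammer{x}{m}\Pochhammer{x+m}{n-k}$, which is exactly your ratio identity.
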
 
\begin{proof} 
The statements of the formulas given in the lemma 
are contained in the references, which in many cases also provide 
explicit proofs of these results. The first relations between the 
rising and falling factorial functions are given in the references 
\cite[\S 4.1.2, \S 5; \cf \S 4.3.1]{UC} 
\cite[\S 2, Ex.\ 2.17, 2.9, 2.16; \S 5.3; \S 6, Ex.\ 6.31, p.\ 552]{GKP}, the 
expansions of polynomial powers are given in the reference 
\cite[\S 6.1]{GKP}, the binomial coefficient identities for the 
Pochhammer symbol are found in the references 
\cite{CVLPOLYS,WOLFRAMFNSSITE-INTRO-FACTBINOMS}, and the 
connection formulas for products and ratios of Pochhammer symbols are 
given in the references \cite[Ex.\ 1.23, p.\ 83]{ADVCOMB} 
\cite{WOLFRAMFNSSITE-INTRO-FACTBINOMS}. 
\end{proof} 

\subsection{New congruences for the 
            $\alpha$-factorial functions, the 
            generalized Stirling number triangles, and 
            Pochhammer $k$-symbols} 
\label{subSection_NewCongruence_Relations_Modulo_Integer_Bases} 
\label{subSection_Congruences_for_Series_ModuloIntegers_p} 

The new results stated in this subsection follow immediately 
from the congruences properties modulo integer divisors of the $M_h$ 
summarized by Section \ref{subSection_EnumProps_of_JFractions} 
considered as in the references \cite{FLAJOLET80B,FLAJOLET82} 
\cite[\cf \S 5.7]{GFLECT}. 
The particular cases of the J-fraction representations enumerating the 
product sequences defined by \eqref{eqn_GenFact_product_form} 
always yield a factor of $h \defequals N_h \mid M_h$ in the statement of 
\eqref{eqn_EnumProps_Of_JTypeCFracs_congruence_rels-stmt_v1} 
(see Remark \ref{remark_Congruences_for_Rational-Valued_Params} below). 
One consequence of this property implicit to each of the generalized 
factorial-like sequences observed so far, is that it is straightforward to 
formulate new congruence relations for these sequences modulo any 
fixed integers $p \geq 2$. 

\begin{remark}[Congruences for Rational-Valued Parameters] 
\label{remark_Congruences_for_Rational-Valued_Params} 
The J-fraction parameters, 
$\lambda_h = \lambda_h(\alpha, R)$ and $M_h = M_h(\alpha, R)$, 
defined as in the summary of the enumerative properties from 
Section \ref{subSection_EnumProps_of_JFractions}, 
corresponding to the 
expansions of the generalized convergents defined by the proof in 
Section \ref{subSection_GenCFrac_Reps_for_GenFactFns} 
satisfy 
\begin{align*} 
\lambda_k(\alpha, R) & \defequals 
     \as_{k-1}(\alpha, R) \cdot \bs_{k}(\alpha, R) \\ 
   & \phantom{:} = 
     \alpha (R + (k-1) \alpha) \cdot k \\ 
M_h(\alpha, R) & \defequals 
     \lambda_1(\alpha, R) \cdot \lambda_2(\alpha, R) 
     \times \cdots \times 
     \lambda_h(\alpha, R) \\ 
   & \phantom{:} = 
     \alpha^{h} \cdot h! \times p_h(\alpha, R) \\ 
   & \phantom{:} = 
   \alpha^{h} \cdot h! \times \Pochhammer{R}{h,\alpha}, 
\end{align*} 
so that for integer divisors, $N_h(\alpha, R) \mid M_h(\alpha, R)$, 
we have that 
\begin{align*} 
p_n(\alpha, R) & \equiv 
     [z^n] \ConvGF{h}{\alpha}{R}{z} \pmod{N_h(\alpha, R)}. 
\end{align*} 
So far we have restricted ourselves to examples of the 
particular product sequence cases, $p_n(\alpha, R)$, 
where $\alpha \neq 0$ is integer-valued, \ie 
so that $p, p\alpha^{i} \mid M_p(\alpha, R)$ for $1 \leq i \leq p$ 
whenever $p \geq 2$ is a fixed natural number. 
Explicit congruence identities arising in some other related applications 
when the choice of $\alpha \neq 0$ is strictly rational-valued
are intentionally not treated in the examples cited in this section. 
\end{remark} 

\subsubsection{Congruences for the Stirling numbers of the first kind and the 
               $r$-order harmonic number sequences} 
\label{subsubSection_remark_New_Congruences_for_GenS1Triangles_and_HNumSeqs} 

\sublabel{Generating the Stirling numbers 
          as series coefficients of the 
          generalized convergent functions} 
For integers $h \geq 3$ and $n \geq m \geq 1$, the (unsigned) 
Stirling numbers of the first kind, $\gkpSI{n}{m}$, 
are generated by the polynomial expansions of the 
rising factorial function, or Pochhammer symbol, 
$\RFactII{x}{n} = \Pochhammer{x}{n}$, as follows 
(\cite[\S 7.4; \S 6]{GKP},\seqnum{A130534}, \seqnum{A008275}): 
\label{eqn_S1FcfAlpha_GenConvFn_Coeffs_and_CongruencesResultStmts} 
\begin{align*} 
\tagonce 
\gkpSI{n}{m} & = 
     [R^m] R (R+1) \cdots (R+n-1) \\ 
     & = 
     [z^n] [R^m] \ConvGF{h}{1}{R}{z},\ 
     \text{ for } 
     1 \leq m \leq n \leq h. 
\end{align*} 
Analogous formulations of new congruence results for the 
$\alpha$-factorial triangles defined by \eqref{eqn_Fa_rdef}, and the 
corresponding forms of the generalized harmonic number sequences, 
are expanded by noting that 
for all $n,m \geq 1$, and integers $p \geq 2$, we have the 
following expansions 
\cite{MULTIFACTJIS}: 
\begin{align*} 
\tagonce 
\FcfII{\alpha}{n}{m} 
     & = 
     [s^{m-1}] (s+1) (s+1+\alpha) \cdots (s+1 + (n-2) \alpha) \\ 
     & = 
     [s^{m-1}] \pn{n-1}{\alpha}{s+1} \\ 
\FcfII{\alpha}{n}{m} & \equiv 
     [z^{n-1}] [R^{m-1}] \ConvGF{p}{\alpha}{R+1}{z} \pmod{p}. 
\end{align*} 
The coefficients of $R^{m}$ in the 
series expansions of the convergent functions, 
$\ConvGF{h}{1}{R}{z}$, in the formal variable $R$ are 
rational functions of $z$ with denominators given by $m^{th}$ powers of the 
reflected polynomials defined in the next equation. 

\sublabel{Definitions of the quasi-polynomial expansions for the 
          Stirling numbers of the first kind} 
For a fixed $h \geq 3$, let the roots, $\omega_{h,i}$, be 
defined as follows: 
\begin{align*} 
\tagonce\label{eqn_S1CoeffsModuloh_DenomReflectedRoots_defs} 
\left(\omega_{h,i}\right)_{i=1}^{h-1} & \defequals 
     \left\{ \omega_j : 
     \sum_{i=0}^{h-1} \binom{h-1}{i} \frac{h!}{(i+1)!} (-\omega_j)^{i} = 0,\ 
     1 \leq j < h \right\}. 
\end{align*} 
The forms of both exact formulas and congruences for the 
Stirling numbers of the first kind modulo any prescribed integers $h \geq 3$ 
are then expanded as 
\begin{align*} 
\gkpSI{n}{m} & = 
     \left(\sum_{i=0}^{h-1} p_{h,i}^{[m]}(n) \times \omega_{h,i}^{n} 
     \right) \Iverson{n > m} + \Iverson{n = m}\ 
     \phantom{\pmod{h}} 
     m \leq n \leq h \\ 
\gkpSI{n}{m} & \equiv 
     \left(\sum_{i=0}^{h-1} p_{h,i}^{[m]}(n) \times \omega_{h,i}^{n} 
     \right) \Iverson{n > m} + \Iverson{n = m} \pmod{h},\ 
     \forall n \geq m, 
\end{align*} 
where the functions, $p_{h,i}^{[m]}(n)$, denote fixed 
polynomials of degree $m$ in $n$ for each $h$, $m$, and $i$ 
\cite[\S 7.2]{GKP}. 
For example, 
when $h \defequals 2, 3$, the respective reflected roots 
defined by the previous equations in 
\eqref{eqn_S1CoeffsModuloh_DenomReflectedRoots_defs} are given exactly by 
\begin{equation*} 
\left\{ \omega_{2,1} \right\} \defequals \{2\} 
     \qquad \text{ and } \qquad 
\left(\omega_{3,i}\right)_{i=1}^{2} \defequals 
     \left\{3-\sqrt{3}, 3+\sqrt{3}\right\}. 
\end{equation*} 

\sublabel{Comparisons to known congruences for the Stirling numbers} 
The special case of 
\eqref{eqn_cor_Congruences_for_AlphaFactFns_modulo2} from the 
examples given in the introduction 
when $\alpha \defequals 1$ corresponding to the single factorial function, 
$n!$, agrees with the known congruence for the 
Stirling numbers of the first kind derived in the reference 
(\cite[\S 4.6]{GFOLOGY}, \cite[\cf \S 5.8]{ADVCOMB}). 
In particular, for all $n \geq 1$ we can prove that 
\begin{align*} 
n! & \equiv 
     \sum_{m=1}^{n} 
     \binom{\lfloor n/2 \rfloor}{m - \lceil n/2 \rceil} 
     (-1)^{n-m} n^m + \Iverson{n = 0} 
     \pmod{2}. 
\end{align*} 
For comparison with the known result for the Stirling numbers of the 
first kind modulo $2$ expanded as in the result from the reference 
stated above, 
several particular cases of these congruences for the Stirling numbers, 
$\gkpSI{n}{m}$, modulo $2$ are given by 
\begin{align*} 
\gkpSI{n}{1} & \equiv 
     \frac{2^{n}}{4} \Iverson{n \geq 2} + \Iverson{n = 1} && \pmod{2} \\ 
\gkpSI{n}{2} & \equiv 
     \frac{3 \cdot 2^{n}}{16} (n-1) \Iverson{n \geq 3} + 
     \Iverson{n = 2} && \pmod{2} \\ 
\gkpSI{n}{3} & \equiv 
     2^{n-7} (9n-20) (n-1) \Iverson{n \geq 4} + 
     \Iverson{n = 3} && \pmod{2} \\ 
\gkpSI{n}{4} & \equiv 
     2^{n-9} (3n-10) (3n-7) (n-1) \Iverson{n \geq 5} + 
     \Iverson{n = 4} && \pmod{2} \\ 
\gkpSI{n}{5} & \equiv 
     2^{n-13} (27n^3-279n^2+934n-1008) (n-1) \Iverson{n \geq 6} + 
     \Iverson{n = 5} && \pmod{2} \\ 
\gkpSI{n}{6} & \equiv 
     \frac{2^{n-15}}{5} (9n^2-71n+120) (3n-14) (3n-11) (n-1) 
     \Iverson{n \geq 7} + \Iverson{n = 6} && \pmod{2}, 
\end{align*} 
where 
\begin{align*} 
\gkpSI{n}{m} & \equiv 
     \binom{\lfloor n/2 \rfloor}{m - \lceil n/2 \rceil} = 
     [x^m] \left( 
     x^{\lceil n/2 \rceil} (x+1)^{\lfloor n/2 \rfloor} 
     \right) && \pmod{2}, 
\end{align*} 
for all $n \geq m \geq 1$ (\cite[\S 4.6]{GFOLOGY}, \seqnum{A087755}). 
The termwise expansions of the row generating functions, $\Pochhammer{x}{n}$, 
for the Stirling number triangle considered modulo $3$ 
with respect to the non-zero indeterminate $x$ similarly imply the next 
properties of these coefficients for any $n \geq m > 0$. 
\begin{align*} 
\gkpSI{n}{m} & \equiv 
     [x^m] \left( 
     x^{\lceil n/3 \rceil} (x+1)^{\lceil (n-1)/3 \rceil} 
     (x+2)^{\lfloor n/3 \rfloor} 
     \right) && \pmod{3} \\ 
     & \equiv 
     \sum_{k=0}^{m} \binom{\lceil (n-1)/3 \rceil}{k} 
     \binom{\lfloor n/3 \rfloor}{m-k - \lceil n/3 \rceil} \times 
     2^{\lceil n/3 \rceil + \lfloor n/3 \rfloor -(m-k)} && \pmod{3} 
\end{align*} 
The next few particular examples of the 
special case congruences 
satisfied by the Stirling numbers of the first kind modulo $3$ 
obtained from the results in 
\eqref{eqn_S1CoeffsModuloh_DenomReflectedRoots_defs} 
above are expanded in the following forms: 
\begin{align*} 
\gkpSI{n}{1} & \equiv 
     \mathsmaller{ 
     \sum\limits_{j = \pm 1} 
     \frac{1}{36} \left(9-5 j\sqrt{3}\right) 
     \times \left(3+j\sqrt{3}\right)^{n} 
     } 
     \Iverson{n \geq 2} + \Iverson{n = 1} && \hspace{-3mm} \pmod{3} \\ 
\gkpSI{n}{2} & \equiv 
     \mathsmaller{ 
     \sum\limits_{j = \pm 1} 
     \frac{1}{216} \left((44n-41)-(25n-24) \cdot j\sqrt{3}\right) 
     \times \left(3+j\sqrt{3}\right)^{n} 
     } 
     \Iverson{n \geq 3} + \Iverson{n = 2} && \hspace{-3mm} \pmod{3} \\ 
\gkpSI{n}{3} & \equiv 
     \mathsmaller{ 
     \sum\limits_{j = \pm 1} 
     \frac{1}{15552} \left((1299n^2-3837n+2412)- 
     (745n^2-2217n+1418) \cdot j\sqrt{3}\right) 
     } \times && \\ 
     & \phantom{\equiv \sum\limits_{j = \pm 1} \frac{1}{15552}} \times 
     \mathsmaller{ 
     \left(3+j\sqrt{3}\right)^{n} 
     } 
     \Iverson{n \geq 4} + \Iverson{n = 3} && \hspace{-3mm} \pmod{3} \\ 
\gkpSI{n}{4} & \equiv 
     \mathsmaller{ 
     \sum\limits_{j = \pm 1} 
     \frac{1}{179936} \bigl((6409n^3-383778n^2+70901n-37092) 
     } && \\ 
     & \phantom{\equiv\mathsmaller{\sum\limits_{j=\pm 1} 
                \frac{1}{179936}}\bigl(} - 
     \mathsmaller{ 
     (3690n^3-22374n^2+41088n-21708) \cdot j\sqrt{3}\bigr) 
     } \times && \\ 
     & \phantom{\equiv \sum\limits_{j = \pm 1} \quad } \times 
     \mathsmaller{ 
     \left(3+j\sqrt{3}\right)^{n} 
     } 
     \Iverson{n \geq 5} + \Iverson{n = 4} && \hspace{-3mm} \pmod{3}. 
\end{align*} 
Additional congruences for the Stirling numbers of the first kind 
modulo $4$ and modulo $5$ are straightforward to expand by 
related formulas with exact 
algebraic expressions for the roots of the third-degree and 
fourth-degree equations defined as in 
\eqref{eqn_S1CoeffsModuloh_DenomReflectedRoots_defs}. 

\sublabel{Rational generating function expansions enumerating the 
          first-order harmonic numbers} 
The next several particular cases of the congruences for integers $p \geq 4$ 
satisfied by the first-order harmonic numbers, $H_n$ or $H_n^{(1)}$, 
are stated exactly in terms the rational generating functions in $z$ that 
lead to generalized forms of the congruences in the last equations 
modulo the integers $p \defequals 2, 3$ 
(\cite[\S 6.3]{GKP}, \seqnum{A001008}, \seqnum{A002805}). 
\begin{align*} 
n! \times H_n^{(1)} 
     & = 
     \gkpSI{n+1}{2} && \\ 
n! \times H_n^{(1)}     
     & \equiv 
     [z^{n+1}] \left( 
     \mathsmaller{ 
     \frac{36 z^2 - 48z + 325}{576} + 
     \frac{17040 z^2+1782 z+6467}{576 \left(24 z^3-36 z^2+12 z-1\right)}+\frac{78828 z^2-33987 z+3071}{288 \left(24
        z^3-36 z^2+12 z-1\right)^2} 
     } 
     \right) && \pmod{4} \\ 
     & \equiv 
     [z^{n}]\left( 
     \mathsmaller{ 
     \frac{3z-4}{48} + 
     \frac{1300 z^2+890 z+947}{96 \left(24 z^3-36 z^2+12 z-1\right)}+\frac{24568 z^2-10576 z+955}{96 \left(24 z^3-36 z^2+12 z-1\right)^2}
     } 
     \right) && \pmod{4} \\ 
     & \equiv 
     [z^{n-1}] \left( 
     \mathsmaller{ 
     \frac{1}{16} + 
     \frac{-96 z^2+794 z+397}{48 \left(24 z^3-36 z^2+12 z-1\right)}+\frac{5730 z^2-2453 z+221}{24 \left(24 z^3-36 z^2+12 z-1\right)^2} 
     } 
     \right) && \pmod{4} \\ 
n! \times H_n^{(1)} 
     & \equiv 
     [z^{n}]\Bigl( 
     \mathsmaller{ 
     \frac{12z-29}{300} + 
     \frac{80130 z^3+54450 z^2+79113 z+108164}{ 
     900 \left(120 z^4-240 z^3+120 z^2-20 z+1\right)} 
     } && \\ 
     & \phantom{\equiv [z^{n}]\Bigl( \quad} + 
     \mathsmaller{ 
     \frac{17470170 z^3-11428050 z^2+2081551 z-108077}{900
     \left(120 z^4-240 z^3+120 z^2-20 z+1\right)^2} 
     } 
     \Bigr) && \pmod{5} \\ 
n! \times H_n^{(1)} 
     & \equiv 
     [z^{n}]\Bigl( 
     \mathsmaller{ 
     \frac{10z-37}{360} + 
     \frac{1419408 z^4+903312 z^3+1797924 z^2+2950002 z+4780681}{2160 
     \left(720 z^5-1800 z^4+1200 z^3-300 z^2+30 z-1\right)} 
     } && \\ 
     & \phantom{\equiv [z^{n}]\Bigl( \quad } + 
     \mathsmaller{ 
     \frac{5581246248 z^4-4906594848z^3+1347715644 z^2-140481648 z+ 
     4780903}{2160 \left(720 z^5-1800 z^4+1200 z^3-300 z^2+30 z-1\right)^2}
     } 
     \Bigr) && \pmod{6} 
\end{align*} 

\begin{remark} 
For each $h \geq 1$, we have $h$-order finite difference 
equations for the Pochhammer symbol, 
$(R)_n = [z^n] \ConvGF{h \geq n}{1}{R}{z}$, implied by the rationality of the 
convergent functions, $\ConvGF{h}{1}{R}{z}$, in $z$ for all $h$, and by the 
expansions of the coefficients of the component convergent function sequences 
in \eqref{eqn_CoeffsOfzk_FQhaRz_restmts_by_GenProductSeqs-v1} and 
in \eqref{eqn_Chn_formula_stmts}. 
In particular, since $n! \cdot H_n$ is generated by the Pochhammer symbol as 
$\gkpSI{n+1}{2} = [R^2] (R)_{n+1}$, for all $n \geq 2$, we see that we also 
have the following identities following from the coefficients of the 
convergent-based generating functions, $\ConvGF{h}{1}{R}{z}$: 
\begin{align*} 
(n-1)! \cdot H_{n-1} & = [R^2]\left( 
     -\sum_{k=0}^{\min(n, h) - 1} \binom{h}{n-k} (1-h-R)_{n-k} (R)_k + 
     C_{h,n}(1, R) \right),\ 0 < n \leq h \\ 
(n-1)! \cdot H_{n-1} & \equiv [R^2]\left( 
     -\sum_{k=0}^{\min(n, h) - 1} \binom{h}{n-k} (1-h-R)_{n-k} (R)_k\right) 
     \pmod{h},\ n \geq h. 
\end{align*} 
\end{remark} 

\sublabel{Rational generating functions enumerating congruences for the 
          $r$-order harmonic numbers} 
The expansions of the integer-order harmonic number sequences 
cited in the reference \cite[\S 4.3.3]{MULTIFACTJIS} also 
yield additional related expansions of congruences for the 
terms, $(n!)^{r} \times H_n^{(r)}$, 
provided by the noted identities for these functions involving the 
Stirling numbers of the first kind modulo any fixed integers $p \geq 2$. 
The \emph{second-order} and \emph{third-order} \emph{harmonic numbers}, 
$H_n^{(2)}$ and $H_n^{(3)}$, respectively, 
are expanded exactly through the following formulas involving the 
Stirling numbers of the first kind modulo any fixed integers $p \geq 2$, and
where the Stirling number sequences, $\gkpSI{n}{m} \pmod{p}$ at each 
fixed $m \defequals 1,2,3,4$, are generated by the predictably 
rational functions of $z$ generated through the identities stated above 
(\cite[\S 4.3.3]{MULTIFACTJIS},
\seqnum{A007406}, \seqnum{A007407}, \seqnum{A007408}, \seqnum{A007409}): 
\begin{align*} 
(n!)^{2} \times H_n^{(2)} 
     & = 
     (n!)^{2} \times \sum_{k=1}^{n} \frac{1}{k^2} \\ 
     & \equiv 
     \gkpSI{n+1}{2}^{2} - 2 \gkpSI{n+1}{1} \gkpSI{n+1}{3} && \pmod{p} \\ 
(n!)^{3} \times H_n^{(3)} 
     & = 
     (n!)^{3} \times \sum_{k=1}^{n} \frac{1}{k^3} \\ 
     & \equiv 
     \gkpSI{n+1}{2}^{3} - 3 \gkpSI{n+1}{1} \gkpSI{n+1}{2} \gkpSI{n+1}{3} + 
     3 \gkpSI{n+1}{1}^{2} \gkpSI{n+1}{4}. 
     && \pmod{p} 
\end{align*} 
The Hadamard product, or diagonal-coefficient, generating function 
constructions formulated in the examples introduced by 
Section \ref{subSection_DiagonalGFSequences_Apps} below 
give expansions of rational convergent-function-based 
generating functions in $z$ that 
generate these corresponding $r$-order sequence cases modulo any 
fixed integers $p \geq 2$. 
The proof of {Theorem 2.4} given in the reference \cite[\S 2]{GFLECT} 
suggests the direct method for obtaining the next 
rational generating functions for these sequences in the 
working source code documented in the reference \cite{SUMMARYNBREF-STUB}, 
each of which generate series coefficients for these particular 
harmonic number sequence variants (modulo $p$) that 
always satisfy some finite-degree 
linear difference equation with constant coefficients over $n$ 
when $\alpha$ and $R$ are treated as constant parameters. 
\begin{align*} 
(n!)^{2} \times H_n^{(2)} 
     & \equiv 
     [z^{n}] \left( 
     \mathsmaller{ 
     \frac{z \left(1-3z+9z^2-8z^3\right)}{(1-4z)^2} 
     } 
     \right) 
     && \hspace{-5mm} \pmod{2} \\ 
     & = 
     [z^{n}] \left(
     \mathsmaller{ 
     \frac{5z}{16}-\frac{z^2}{2} + \frac{11}{64 (1-4z)^2} - 
     \frac{11}{64 (1-4z)} 
     } 
     \right) 
     \\ 
     & = 
     [z^{n}] \left( 
     \mathsmaller{ 
     z + 5z^2+33z^3+176z^4+880z^5+4224z^6+ \cdots 
     } 
     \right) 
     \\ 
(n!)^{2} \times H_n^{(2)}     
     & \equiv 
     [z^{n}] \left( 
     \mathsmaller{ 
     \frac{z \left(1-61 z+1339 z^2-13106 z^3+62284 z^4-144264 z^5+ 
     151776 z^6-124416 z^7+41472 z^8\right)}{(1-6 z)^3 
     \left(1-24 z+36 z^2\right)^2}
     } 
     \right) 
     && \hspace{-5mm} \pmod{3} \\ 
     & = 
     [z^{n}] \Bigl( 
     \mathsmaller{ 
     -\frac{13}{324}+\frac{14 z}{81}-\frac{4 z^2}{27} + 
     \frac{25}{1944 (-1+6 z)^3}+\frac{115}{1944 (-1+6 z)^2} + 
     \frac{5}{162 (-1+6 z)} 
     } \\ 
     & \phantom{= [z^{n}] \Bigl( \quad} + 
     \mathsmaller{ 
     \frac{-787+17624 z}{216
     \left(1-24 z+36 z^2\right)^2}+\frac{2377+3754 z}{648 
     \left(1-24 z+36 z^2\right)} 
     } 
     \Bigr) 
     \\ 
     & = 
     [z^{n}] \left( 
     \mathsmaller{ 
     z + 5z^2+49z^3+820z^4+18232z^5+437616z^6+ \cdots 
     } 
     \right) 
     \\ 
(n!)^{3} \times H_n^{(3)} 
     & \equiv 
     [z^{n}] \left( 
     \mathsmaller{ 
     \frac{z \left(1-7 z+49 z^2-144 z^3+192 z^4\right)}{(1-8z)^2} 
     } 
     \right) 
     && \hspace{-5mm} \pmod{2} \\ 
     & = 
     [z^{n}] \left( 
     \mathsmaller{ 
     \frac{11z}{32}-\frac{3z^2}{2}+3z^3 + 
     \frac{21}{256 (1-8z)^2} - \frac{21}{256 (1-8z)} 
     } 
     \right) 
     \\ 
     & = 
     [z^{n}] \left( 
     \mathsmaller{ 
     z +9z^2+129z^3+1344z^4+13440z^5+129024z^6+ \cdots 
     } 
     \right) 
     \\ 
(n!)^{3} \times H_n^{(3)}     
     & \equiv 
     [z^{n}] \Bigl( 
     \mathsmaller{ 
     -\frac{143}{5832}+\frac{625 z}{2916}-\frac{4 z^2}{9}+ 
     \frac{4 z^3}{3}+\frac{115 (-6719+711956 z)}{93312 
     \left(1-108 z+216 z^2\right)^2} 
     } 
     && \hspace{-5mm} \pmod{3} \\ 
     & \phantom{\equiv [z^{n}] \Bigl( \quad} + 
     \mathsmaller{ 
     \frac{774079+1459082 z}{93312 \left(1-108 z+216 z^2\right)} - 
     \frac{125 (-11+312 z)}{11664 \left(1-36 z+216 z^2\right)^4} 
     } \\ 
     & \phantom{\equiv [z^{n}] \Bigl(\quad} - 
     \mathsmaller{ 
     \frac{10 (1+306z)}{729 \left(1-36 z+216 z^2\right)^3} + 
     \frac{-20677+269268 z}{93312 \left(1-36 z+216 z^2\right)^2} + 
     \frac{11851+89478 z}{93312 \left(1-36 z+216z^2\right)} 
     } 
     \Bigr) 
     \\ 
     & = 
     [z^{n}] \left( 
     \mathsmaller{ 
     z+9z^2+251z^3+16280z^4+1586800z^5+171547200z^6+\cdots 
     } 
     \right) 
\end{align*} 
The next cases of the rational generating functions enumerating the 
terms of these two sequences modulo $4$ and $5$ 
lead to less compact formulas expanded in partial fractions over $z$, 
roughly approximated in form by the 
generating function expansions from the previous formulas. 
The factored denominators, 
denoted $\HNumGFFactoredDenomFn{r}{p}{z}$ immediately below, 
of the rational generating functions over the 
respective second-order and third-order cases of the $r$-order sequences 
modulo $p \defequals 4,5$ are provided for reference in the 
following equations: 
\begin{align*} 
\HNumGFFactoredDenomFn{2}{4}{z} & = 
     \mathsmaller{ 
     \left(-1+72 z-720 z^2+576 z^3\right)^2 
     \left(-1+36 z-288 z^2+576 z^3\right)^3 
     } \\ 
\HNumGFFactoredDenomFn{2}{5}{z} & = 
     \mathsmaller{ 
     \left(1-160 z+5040 z^2-28800 z^3+14400 z^4\right)^2 \times 
     } \\ 
     & \phantom{=\ } \times 
     \mathsmaller{ 
     \left(1-120 z+4680 z^2-76800 z^3+561600 z^4-1728000 z^5+ 
     1728000 z^6\right)^3 
     } \\ 
\HNumGFFactoredDenomFn{3}{4}{z} & = 
     \mathsmaller{ 
     (1-24z)^4 \left(-1+504 z-17280 z^2+13824 z^3\right)^2 \times 
     } \\ 
     & \phantom{= \quad} \times 
     \mathsmaller{ 
     \left(-1+144 z-5184 z^2+13824 z^3\right)^4 
     } 
     \left(-1+216 z-3456 z^2+13824 z^3\right)^4 \\ 
\HNumGFFactoredDenomFn{3}{5}{z} & = 
     \mathsmaller{ 
     \left(1-1520 z+273600 z^2-4320000 z^3+1728000 z^4\right)^2 \times 
     } \\ 
     & \phantom{= \quad } \times 
     \mathsmaller{ 
     \left(1-240 z+14400 z^2-288000 z^3+1728000 z^4\right)^4 \times 
     } \\ 
     & \phantom{= \quad } \times 
     \mathsmaller{ 
     \bigl(1-1680 z+1051200 z^2-319776000
        z^3+51914304000 z^4 
     } \\ 
     & \phantom{= \quad \times \bigl(1} 
     \mathsmaller{ - 
     4764026880000 z^5+251795865600000 z^6- 
        7537618944000000 z^7 
     } \\ 
     & \phantom{= \quad \times \bigl(1} 
     \mathsmaller{ + 
     121956544512000000 z^8-998751928320000000 z^9 
     } \\ 
     & \phantom{= \quad \times \bigl(1} 
     \mathsmaller{ + 
     4084826112000000000 z^{10} 
     -7739670528000000000 z^{11} 
     } \\ 
     & \phantom{= \quad \times \bigl(1} 
     \mathsmaller{ + 
     5159780352000000000 z^{12} 
     \bigr)^4 
     }. 
\end{align*} 
The summary notebook reference contains further complete expansions of the 
rational generating functions enumerating these $r$-order sequence 
cases for $r \defequals 1,2,3,4$ modulo the next few prescribed cases of the 
integers $p \geq 6$ 
(\cite[\cf \S 4.3.3]{MULTIFACTJIS}, \cite{SUMMARYNBREF-STUB}). 

\subsubsection{Generalized expansions of the new integer congruences for the 
               $\alpha$-factorial functions and the 
               symbolic product sequences} 
\label{subsubSection_GenpnAlphaR_Congr_Mod234} 

\sublabelII{Generalized forms of the special case results 
            expanded in the introduction} 
\begin{example}[The Special Cases Modulo $2$, $3$, and $4$] 
The first congruences for the $\alpha$-factorial functions, 
$n!_{(\alpha)}$, modulo the prescribed integer bases, $2$ and $2\alpha$, 
cited in \eqref{eqn_cor_Congruences_for_AlphaFactFns_modulo2} 
from the introduction result by applying 
Lemma \ref{lemma_GenConvFn_EnumIdents_pnAlphaRSeq_idents_combined_v1} 
to the series for the generalized convergent function, 
$\ConvGF{2}{\alpha}{R}{z}$, expanded by following equations: 
\begin{align*} 
p_n(\alpha, R) & \equiv 
     [z^n] \left( 
     \frac{1-z (2\alpha+R)}{R (\alpha+R) z^2 -2(\alpha+R) z + 1} 
     \right) 
     && \pmod{2, 2\alpha} \\ 
   & = 
     \sum_{b=\pm 1} 
     \frac{\left(\sqrt{\alpha  (\alpha +R)} -b \cdot \alpha\right) 
     \left(\alpha + b \cdot \sqrt{\alpha  (\alpha +R)}+R\right)^n}{ 
     2 \sqrt{\alpha (\alpha +R)}} 
     && \pmod{2, 2\alpha}. 
\end{align*} 
The next congruences for the $\alpha$-factorial function sequences 
modulo $3$ ($3\alpha$) and modulo $4$ ($4\alpha$) 
cited as particular examples in 
Section \ref{subSection_Intro_Examples} 
are established similarly by applying the previous lemma to the 
series coefficients of the next cases of the convergent functions, 
$\ConvGF{p}{\alpha}{R}{z}$, for $p \defequals 3,4$ and where 
$\alpha \defmapsto -\alpha$ and $R \defmapsto n$. 
\begin{align*} 
p_n(\alpha, R) & \equiv 
     [z^n] \left( 
     \frac{1 - 2(3\alpha+R) z + z^2 \left(R^2 + 4\alpha R + 6\alpha^2\right)}{ 
     1 - 3 (2\alpha+R) z + 3 (\alpha+R)(2\alpha+R) z^2 - 
     R (\alpha+R) (2\alpha+R) z^3} 
     \right) \\ 
     & \hspace{0.55\textwidth} \pmod{3, 3\alpha} \\ 
p_n(\alpha, R) & \equiv 
     [z^n] \left( 
     \mathsmaller{ 
     \frac{1-3(R+4 \alpha )z + z^2 \left(3 R^2+19 R \alpha +36 \alpha ^2\right)- (R+4 \alpha ) \left(R^2+3 R \alpha +6 \alpha ^2\right) z^3}{ 
     1-4(R+3 \alpha) z + 6 (R+2 \alpha ) (R+3 \alpha ) z^2 - 4(R+\alpha ) (R+2 \alpha ) (R+3 \alpha ) z^3 + R (R+\alpha ) (R+2 \alpha ) (R+3 \alpha ) z^4} 
     } 
     \right) \\ 
     & \hspace{0.55\textwidth} \pmod{4, 4\alpha} 
\end{align*} 
The particular cases of the new congruence properties satisfied 
modulo $3$ ($3\alpha$) and $4$ ($4\alpha$) cited in 
\eqref{eqn_AlphaFactFnModulo3_congruence_stmts} from 
Section \ref{subSection_Intro_Examples} of the 
introduction also phrase results that are expanded 
through exact algebraic formulas involving the reciprocal zeros 
of the convergent denominator functions, 
$\ConvFQ{3}{-\alpha}{n}{z}$ and $\ConvFQ{4}{-\alpha}{n}{z}$, given in 
\tableref{table_SpCase_Listings_Of_Qhz_ConvFn} 
\cite[\cf \S 1.11(iii), \S 4.43]{NISTHB}. 
The congruences cited in the example cases from the 
first two introduction sections to the article then 
correspond to the respective special cases of the reflected 
numerator polynomial sequences provided in 
\tableref{table_RelfectedConvNumPolySeqs_sp_cases}. 
\end{example} 

\sublabelII{Definitions related to the 
            reflected convergent numerator and denominator function 
            sequences} 
\begin{definition} 
For any $h \geq 1$, let the reflected convergent numerator and 
denominator function sequences be defined as follows: 
\begin{align*} 
\tagtext{Reflected Numerator Polynomials} 
\widetilde{\FP}_h(\alpha, R; z) & \defequals 
     z^{h-1} \times \ConvFP{h}{\alpha}{R}{z^{-1}} \\ 
\tagtext{Reflected Denominator Polynomials} 
\widetilde{\FQ}_h(\alpha, R; z) & \defequals 
     z^{h} \times \ConvFQ{h}{\alpha}{R}{z^{-1}} 
\end{align*} 
The listings given in 
\tableref{table_RelfectedConvNumPolySeqs_sp_cases} 
provide the first few simplified cases of the 
reflected numerator polynomial sequences 
which lead to the explicit formulations of the congruences 
modulo $p$ (and $p\alpha$) 
at each of the particular cases of $p \defequals 4, 5$ given in 
\eqref{eqn_AlphaFactFnModulo3_congruence_stmts}, and 
then for the next few small special cases for subsequent 
cases of the integers $p \geq 6$. 
\end{definition} 

\begin{definition} 
More generally, 
let the respective sequences of $p$-order roots, 
and then the corresponding sequences of 
reflected partial fraction coefficients, 
be defined as ordered sequences over any $p \geq 2$ and each 
$1 \leq i \leq p$ through some fixed ordering of the 
special function zeros defined by the next equations. 
\begin{align*} 
\tagtext{Sequences of Reflected Roots} 
\left( \ell_{p,i}^{(\alpha)}(R) \right)_{i=1}^{p} & \defequals 
     \left\{ z_i : \widetilde{\FQ}_h(\alpha, R; z_i) = 0,\ 
     1 \leq i \leq p \right\} \\ 
\tagtext{Sequences of Reflected Coefficients} 
\left( C_{p,i}^{(\alpha)}(R)  \right)_{i=1}^{p} & \defequals 
     \left( 
     \widetilde{\FP}_h\left(\alpha, R; \ell_{p,i}^{(\alpha)}(R)\right) 
     \right)_{i=1}^{p}. 
\end{align*} 
The sequences of reflected roots 
defined by the previous equations above in terms of the 
reflected denominator polynomials 
correspond to special zeros of the 
confluent hypergeometric function, $\HypU{-h}{b}{w}$, and the 
associated Laguerre polynomials, $L_p^{(\beta)}(w)$, defined as in the 
special zero sets from 
Section \ref{subSection_Intro_GenConvFn_Defs_and_Properties} 
of the introduction 
\cite[\S 18.2(vi), \S 18.16]{NISTHB} 
\cite{LGWORKS-ASYMP-SPFNZEROS2008,PROPS-ZEROS-CHYPFNS80}. 
\end{definition} 

\sublabel{Generalized statements of the exact formulas and 
          new congruence expansions} 
The notation in the previous definition is then employed by the 
next restatements of the 
exact formula expansions and congruence properties cited in the 
initial forms of the expansions given in 
\eqref{eqn_AlphaFactFn_Exact_PartialFracsRep_v1} and 
\eqref{eqn_AlphaFactFn_Exact_PartialFracsRep_v2} of 
Section \ref{subSection_Intro_GenConvFn_Defs_and_Properties}. 
In particular, for any $p \geq 2$, $h \geq n \geq 1$, and 
where $\alpha, R \neq 0$ correspond to fixed 
integer-valued (or symbolic indeterminate) parameters, the 
expansions of the generalized product sequence cases defined by 
\eqref{eqn_GenFact_product_form} satisfy the following relations 
(see Remark \ref{remark_Congruences_for_Rational-Valued_Params} above): 
\renewcommand{\rootri}[4]{ 
     \ensuremath{\ell_{#1,#2}^{\left(#3\right)}\left(#4\right)}
} 
\label{eqn_AlphaFactFnModulop_congruence_stmts} 
\begin{align*} 
\tagonce\label{eqn_AlphaFactFnModulop_congruence_stmts-pn_stmts_v1} 
\pn{n}{\alpha}{R} & = 
     \sum\limits_{1 \leq i \leq h} 
     \frac{C_{h,i}^{(\alpha)}(R)}{\prod\limits_{j \neq i} 
     \left(\rootri{h}{i}{\alpha}{R} - \rootri{h}{j}{\alpha}{R}\right)} 
     \times \left( \rootri{h}{i}{\alpha}{R} \right)^{n+1},\ 
     && \quad \forall h \geq n \geq 1 \\ 
\pn{n}{\alpha}{R} & \equiv 
     \sum\limits_{1 \leq i \leq p} 
     \frac{C_{p,i}^{(\alpha)}(R)}{\prod\limits_{j \neq i} 
     \left(\rootri{p}{i}{\alpha}{R} - \rootri{p}{j}{\alpha}{R}\right)} 
     \times \left( \rootri{p}{i}{\alpha}{R} \right)^{n+1} 
     && \pmod{p, p\alpha, p\alpha^{2}, \ldots, p\alpha^{p}}. 
\end{align*} 
The expansions in the next equations similarly state the 
desired results stating the generalized forms of the 
congruence formulas for the $\alpha$-factorial functions, 
$\MultiFactorial{n}{\alpha}$, 
given by the particular special case expansions in 
\eqref{eqn_AlphaFactFnModulo3_congruence_stmts} from 
Section \ref{subsubSection_Examples_NewCongruences} of the 
introduction to the article. 
\begin{align*} 
\tagonce 
n!_{(\alpha)} & = 
     \sum\limits_{1 \leq i \leq h} 
     \frac{C_{h,i}^{(-\alpha)}(n)}{\prod\limits_{j \neq i} 
     \left(\rootri{h}{i}{-\alpha}{n} - \rootri{h}{j}{-\alpha}{n}\right)} 
     \times \left( \rootri{h}{i}{-\alpha}{n} 
     \right)^{\left\lfloor \frac{n-1}{\alpha} \right\rfloor + 1},\ 
     && \quad \forall h \geq n \geq 1 && \\ 
n!_{(\alpha)} & \equiv 
     \undersetbrace{\defequals R_p^{(\alpha)}(n) 
     \text{ in Section \ref{subsubSection_Examples_NewCongruences} and 
            in \tableref{table245}}}{ 
     \sum\limits_{1 \leq i \leq p} 
     \frac{C_{p,i}^{(-\alpha)}(n)}{\prod\limits_{j \neq i} 
     \left(\rootri{p}{i}{-\alpha}{n} - \rootri{p}{j}{-\alpha}{n}\right)} 
     \times \left( \rootri{p}{i}{-\alpha}{n} 
     \right)^{\left\lfloor \frac{n-1}{\alpha} \right\rfloor + 1} 
     } 
     && \pmod{p, p\alpha, p\alpha^{2}, \ldots, p\alpha^{p}}. 
\end{align*} 
The first pair of expansions given in 
\eqref{eqn_AlphaFactFnModulop_congruence_stmts-pn_stmts_v1} 
for the generalized product sequences, $\pn{n}{\alpha}{R}$, 
provide exact formulas and the corresponding 
new congruence properties for the Pochhammer symbol and 
Pochhammer $k$-symbol, 
in the respective special cases where 
$\Pochhammer{x}{n} \defmapsto \alpha^{-n} \pn{n}{\alpha}{\alpha x}$ and 
$\Pochhammer{x}{n,\alpha} \defmapsto \pn{n}{\alpha}{x}$ in the 
equations above. 

\subsection{Applications of 
            rational diagonal-coefficient generating functions and 
            Hadamard product sequences 
            involving the generalized convergent functions} 
\label{subSection_DiagonalGFSequences_Apps} 

\subsubsection{Generalized definitions and 
               coefficient extraction formulas for 
               sequences involving products of rational generating functions} 

We define the next extended notation for the 
\emph{Hadamard product} generating functions, \\ 
$\left(\FiGF{1} \odot \FiGF{2}\right)(z)$ and 
$\left(\FiGF{1} \odot \cdots \odot \FiGF{k}\right)(z)$, 
at some fixed, formal $z \in \mathbb{C}$. 
Phrased in slightly different wording, 
we define \eqref{eqn_HProdGFs_kGen_def_v1} as an alternate notation for the 
\emph{diagonal} \emph{generating functions} that enumerate the 
corresponding product sequences 
generated by the diagonal coefficients of the multiple-variable 
product series in 
$k$ formal variables treated as in the reference \cite[\S 6.3]{ECV2}. 
\begin{align} 
\label{eqn_HProdGFs_kGen_def_v1} 
\FiGF{1} \odot \FiGF{2} \odot \cdots \odot \FiGF{k} & 
     \defequals 
     \sum_{n \geq 0} f_{1,n} f_{2,n} \cdots f_{k,n} \times z^{n} 
     \quad \text{ where } \quad 
     \FiGF{i}(z) \defequals \sum_{n \geq 0} f_{i,n} z^{n} 
     \text{ for } 
     1 \leq i \leq k 
\end{align} 
When $\FiGF{i}(z)$ is a rational function of $z$ for each $1 \leq i \leq k$, 
we have particularly nice expansions of the 
coefficient extraction formulas of the 
rational diagonal generating functions from the references 
(\cite[\S 6.3]{ECV2}, \cite[\S2.4]{GFLECT}). 
In particular, 
when $\FiGF{i}(z)$ is rational in $z$ at each respective $i$,
these rational generating functions are expanded through the 
next few useful formulas: 
\begin{align*} 
\tagtext{Diagonal Coefficient Extraction Formulas} 
\FiGF{1} \odot \FiGF{2} & = 
     [x_1^0] \Biggl( 
     \FiGF{2}\left(\frac{z}{x_1}\right) \cdot 
     \FiGF{1}(x_1) 
     \Biggr) \\ 
\tagonce\label{eqn_kGenHProdGFs_RationalDiagonalGF_Idents-stmts_v1} 
\FiGF{1} \odot \FiGF{2} \odot \FiGF{3} & = 
     [x_2^0 x_1^0] \Biggl( 
     \FiGF{3}\left(\frac{z}{x_2}\right) \cdot 
     \FiGF{2}\left(\frac{x_2}{x_1}\right) \cdot 
     \FiGF{1}(x_1) 
     \Biggr) \\ 
\FiGF{1} \odot \FiGF{2} \odot \cdots \odot \FiGF{k} & = 
     [x_{k-1}^0 \cdots x_2^0 x_1^0] \Biggl( 
     \FiGF{k}\left(\frac{z}{x_{k-1}}\right) \cdot 
     \FiGF{k-1}\left(\frac{x_{k-1}}{x_{k-2}}\right) 
     \times \cdots \times 
     \FiGF{2}\left(\frac{x_{2}}{x_{1}}\right) \cdot 
     \FiGF{1}(x_1) 
     \Biggr). 
\end{align*} 

\begin{remark}[Integral Representations] 
Analytic formulas for the Hadamard products, 
$\FiGF{1} \odot \FiGF{2} = \FiGF{1}(z) \odot \FiGF{2}(z)$, 
when the component sequence generating functions are 
well enough behaved in some neighborhood of $z_0 = 0$ 
are given in the references 
(\cite[\S 1.12(V); Ex.\ 1.30, p.\ 85]{ADVCOMB}, \cite[\S 6.10]{ACOMB-BOOK}). 
In particular, we compare the first formula in 
\eqref{eqn_kGenHProdGFs_RationalDiagonalGF_Idents-stmts_v1} when $k := 2$ 
to the next known integral formula when the power series for 
both sequence generating functions, 
$\FiGF{1}(z)$ and $\FiGF{2}(z)$, are absolutely convergent in 
some $|z| \leq r < 1$ given by  
\begin{align*} 
\left(\FiGF{1} \odot \FiGF{2}\right)(z^2) & = 
     \frac{1}{2\pi} \int_0^{2\pi} 
     \FiGF{1}\left(z e^{\imath t}\right) 
     \FiGF{2}\left(z e^{-\imath t}\right) dt. 
\end{align*} 
The integral representation in the last equation 
is easily proved directly by noticing that 
\begin{align*} 
\frac{1}{2\pi} \int_0^{2\pi} [z^n] \FiGF{1}\left(z e^{\imath t}\right) 
     \FiGF{2}\left(z e^{-\imath t}\right) dt & = 
     \sum_{0 \leq k \leq n} \frac{1}{2\pi} \int_0^{2\pi} 
     e^{-(n-2k) \imath t} f_1(k) f_2(n-k) dt \\ 
     & = 
     \sum_{0 \leq k \leq n} f_1(k) f_2(n-k) \cdot \Iverson{n = 2k} \\ 
     & = 
     \begin{cases} 
        f_1\left(\frac{n}{2}\right) f_2\left(\frac{n}{2}\right) & 
        \text{ if $n$ is even; } \\ 
        0 & \text{ if $n$ is odd. } 
     \end{cases} 
\end{align*} 
\end{remark} 

We regard the rational convergents approximating the 
otherwise divergent 
ordinary generating functions for the generalized factorial function 
sequences strictly as formal power series in $z$ 
whenever possible in this article. 
The remaining examples in this section illustrate this more 
formal approach using the generating functions enumerating the 
factorial-related product sequences considered here. 
The next several subsections of the article 
aim to provide concrete applications and 
some notable special cases illustrating the utility of this 
approach to the more general formal sequence products enumerated through the 
rational convergent functions, 
especially when combined with other 
generating function techniques discussed elsewhere and in the references. 

\subsubsection{Examples: Constructing hybrid rational 
               generating function approximations from the 
               convergent functions enumerating the generalized 
               factorial product sequences} 
\label{subsubSection_remark_HybridDiagonalHPGFs} 

When one of the generating functions of an individual sequence from the 
Hadamard product representations in 
\eqref{eqn_kGenHProdGFs_RationalDiagonalGF_Idents-stmts_v1} 
is not rational in $z$, we still proceed, however slightly more carefully, to 
formally enumerate the terms of these sequences that arise in applications. 
For example, 
the \emph{central binomial coefficients} 
are enumerated by the next convergent-based generating functions 
whenever $n \geq 1$ 
(\cite[\cf \S 5.3]{GKP}, \seqnum{A000984}). 
\begin{align*} 
\tagtext{Central Binomial Coefficients} 
\binom{2n}{n} & = 
     \frac{2^{2n}}{n!} \times \Pochhammer{1 / 2}{n} \ \ \ = 
     [z^{n}] [x^{0}] \Biggl( 
     e^{2x} \ConvGF{n}{2}{1}{\frac{z}{x}} 
     \Biggr) \\ 
     & = 
     \frac{2^{n}}{n!} \times (2n-1)!! = 
     [z^{n}] [x^{1}] \Biggl( 
     e^{2x} \ConvGF{n}{-2}{2n-1}{\frac{z}{x}} 
     \Biggr). 
\end{align*} 
Since the reciprocal factorial squared terms, $(n!)^{-2}$, 
are generated by the power series for the 
\emph{modified Bessel function of the first kind}, 
$I_0(2\sqrt{z}) = \sum_{n \geq 0} z^{n} / (n!)^{2}$, these 
central binomial coefficients are also enumerated as the 
diagonal coefficients of the 
following convergent function products 
(\cite[\S 10.25(ii)]{NISTHB}, \cite[\S 5.5]{GKP},
\cite[\cf \S 1.13(II)]{ADVCOMB}): 
\begin{align*} 
\binom{2n}{n} & = 
     \frac{2^{2n} \Pochhammer{1}{n} \Pochhammer{\frac{1}{2}}{n}}{(n!)^2} \\ 
     & = 
     [x_1^0 x_2^0 z^n] \left( 
     \ConvGF{n}{2}{2}{\frac{z}{x_2}} \ConvGF{n}{2}{1}{\frac{x_2}{x_1}} 
     I_0\left(2\sqrt{x_1}\right) 
     \right) \\ 
     & = 
     \frac{(2n)!! (2n-1)!!}{(n!)^2} \\ 
     & = 
     [x_1^0 x_2^0 z^n] \left( 
     \ConvGF{n}{-2}{2n}{\frac{z}{x_2}} \ConvGF{n}{-2}{2n-1}{\frac{x_2}{x_1}} 
     I_0\left(2\sqrt{x_1}\right) 
     \right). 
\end{align*} 
The next binomial coefficient product sequence is enumerated through a 
similar construction of the convergent-based generating function 
identities expanded in the previous equations. 
\begin{align*} 
\tagtext{Paired Binomial Coefficient Products} 
\binom{3n}{n} \binom{2n}{n} & = 
     \frac{3^{3n} \Pochhammer{\frac{1}{3}}{n} 
     \Pochhammer{\frac{2}{3}}{n}}{(n!)^2} \\ 
     & = 
     [x_1^0 x_2^0 z^n] \left( 
     \ConvGF{n}{3}{2}{\frac{3z}{x_2}} \ConvGF{n}{3}{1}{\frac{x_2}{x_1}} 
     I_0\left(2\sqrt{x_1}\right) 
     \right) \\ 
     & = 
     \frac{3^n}{(n!)^2} \times 
     \AlphaFactorial{3n-1}{3} \AlphaFactorial{3n-2}{3} \\ 
     & = 
     [x_1^0 x_2^0 z^n] \left( 
     \ConvGF{n}{-3}{3n-1}{\frac{3z}{x_2}} 
     \ConvGF{n}{-3}{3n-2}{\frac{x_2}{x_1}} 
     I_0\left(2\sqrt{x_1}\right) 
     \right) 
\end{align*} 

\sublabel{Generating ratios of factorial functions and 
          binomial coefficients} 
The next few identities for the convergent generating function products 
over the binomial coefficient variants cited in 
\eqref{eqn_HybridDiagCoeffHPGFs_BinomCoeff_Examples-exps_v1} from the 
introduction are generated as the diagonal coefficients 
of the corresponding products of the convergent functions convolved with 
arithmetic progressions extracted from the exponential series 
in the form of the following equation, 
where $\omega_a \defequals \exp\left(2\pi\imath / a\right)$ denotes the 
primitive $a^{th}$ root of unity for integers $a \geq 2$ 
(\cite[\S 1.2.9]{TAOCPV1}, \cite[Ex.\ 1.26, p.\ 84]{ADVCOMB}): 
\begin{align*} 
\tagonce\label{eqn_HybridDiagCoeffHPGFs_BinomCoeff_Examples-exps_ExpGFs_v2} 
\widehat{E}_a(z) & \defequals \sum_{n \geq 0} \frac{z^n}{(an)!} = 
     \frac{1}{a}\left( 
     e^{z^{1/a}} + e^{\omega_a \cdot z^{1/a}} + 
     e^{\omega_a^2 \cdot z^{1/a}} + \cdots + 
     e^{\omega_a^{a-1} \cdot z^{1/a}} 
     \right),\ 
     a > 1. 
\end{align*} 
The modified generating functions, $\widehat{E}_a(z) = E_{a,1}(z)$, 
correspond to special cases of the 
\emph{Mittag-Leffler function} defined as in the 
reference by the following series \cite[\S 10.46]{NISTHB}: 
\begin{align*} 
E_{a,b}(z) & \defequals 
     \sum_{n \geq 0} \frac{z^n}{\Gamma(an+b)},\ a > 0. 
\end{align*} 
These modified exponential series generating functions then denote the 
power series expansions of arithmetic progressions over the 
coefficients of the ordinary generating function 
for the exponential series sequences, $f_n \defequals 1 / n!$ and 
$f_{an} = 1 / (an)!$. 
For $a \defequals 2, 3, 4$, the 
particular cases of these exponential series generating functions 
are given by 
\begin{align*} 
\widehat{E}_2(z) & = \cosh\left(\sqrt{z}\right) \\ 
\widehat{E}_3(z) & = 
     \frac{1}{3}\left( 
     e^{z^{1/3}} + 2 e^{-\frac{z^{1/3}}{2}} 
     \mathsmaller{\cos\left(\frac{\sqrt{3} \cdot z^{1/3}}{2}\right)} 
     \right) \\ 
\widehat{E}_4(z) & = 
     \frac{1}{2}\left( 
     \cos\left(z^{1/4}\right) + \cosh\left(z^{1/4}\right) 
     \right), 
\end{align*} 
where the powers of the $a^{th}$ roots of unity in these 
special cases satisfy 
$\omega_2 = -1$, 
$\omega_3 = \frac{\imath}{2}\left(\imath + \sqrt{3}\right)$, 
$\omega_3^2 = -\frac{\imath}{2}\left(-\imath + \sqrt{3}\right)$, and 
$\left(\omega_4^{m}\right)_{1 \leq m \leq 4} = \left(\imath, -1, -\imath, 1\right)$ 
(see the computations given in the reference \cite{SUMMARYNBREF-STUB}). 

The next particular special cases of these 
diagonal-coefficient generating functions 
corresponding to the binomial coefficient sequence variants from 
\eqref{eqn_HybridDiagCoeffHPGFs_BinomCoeff_Examples-exps_v1} of 
Section \ref{subsubSection_Intro_Examples_Fact-RelatedSeqs_GenByTheConvFns} 
are then given through the following coefficient extraction identities 
provided by \eqref{eqn_kGenHProdGFs_RationalDiagonalGF_Idents-stmts_v1} 
(\seqnum{A166351}, \seqnum{A066802}): 
\begin{align*} 
\frac{(6n)!}{(3n)!} & = 
     \frac{6^{6n} 
     \bcancel{\Pochhammer{1}{n}} 
     \bcancel{\Pochhammer{\frac{2}{6}}{n}} 
     \bcancel{\Pochhammer{\frac{3}{6}}{n}} \times 
     \Pochhammer{\frac{1}{6}}{n} 
     \Pochhammer{\frac{3}{6}}{n} 
     \Pochhammer{\frac{5}{6}}{n} 
     }{ 
     3^{3n} 
     \bcancel{\Pochhammer{1}{n}} 
     \bcancel{\Pochhammer{\frac{1}{3}}{n}} 
     \bcancel{\Pochhammer{\frac{2}{3}}{n}} 
     \phantom{\qquad}} \\ 
     & = 
     24^{n} \times 6^n \Pochhammer{1/6}{n} \times 
     2^{n} \Pochhammer{1/2}{n} \times 6^n \Pochhammer{5/6}{n} \\ 
     & = 
     [x_2^0 x_1^0 z^n] \left( 
     \ConvGF{n}{6}{5}{\frac{24z}{x_2}} \ConvGF{n}{2}{1}{\frac{x_2}{x_1}} 
     \ConvGF{n}{6}{1}{x_1} 
     \right) \\ 
     & = 
     8^n \times \AlphaFactorial{6n-5}{6} 
     \AlphaFactorial{6n-3}{6} \AlphaFactorial{6n-1}{6} \\ 
     & = 
     [x_2^0 x_1^0 z^n] \Biggl( 
     \ConvGF{n}{-6}{6n-5}{\frac{8z}{x_2}} 
     \ConvGF{n}{-6}{6n-3}{\frac{x_2}{x_1}} \times \\ 
     & \phantom{= [x_2^0 x_1^0 z^n] \Biggl(\ } \times 
     \ConvGF{n}{-6}{6n-1}{x_1} 
     \Biggr) \\ 
\binom{6n}{3n} & = 
     [x_3^0 x_2^0 x_1^0 z^n] \Biggl( 
     \ConvGF{n}{6}{5}{\frac{8z}{x_3}} \ConvGF{n}{6}{3}{\frac{x_3}{x_2}} 
     \ConvGF{n}{6}{1}{\frac{x_2}{x_1}} \times \\ 
     & \phantom{= [x_3^0 x_2^0 x_1^0 z^n] \Biggl( \quad} \times 
     \undersetbrace{\widehat{E}_3(x_1) = E_{3,1}(x_1)}{
     \frac{1}{3}\left( 
     e^{x_1^{1/3}} + 2 e^{-\frac{x_1^{1/3}}{2}} 
     \cos\left(\frac{\sqrt{3} \cdot x_1^{1/3}}{2}\right) 
     \right) 
     } 
     \Biggr) \\ 
     & = 
     [x_3^0 x_2^0 x_1^0 z^n] \Biggl( 
     \ConvGF{n}{-6}{6n-5}{\frac{8z}{x_3}} 
     \ConvGF{n}{-6}{6n-3}{\frac{x_3}{x_2}} \times \\ 
     & \phantom{= [x_3^0 x_2^0 x_1^0 z^n] \Biggl( \quad} \times 
     \ConvGF{n}{-6}{6n-1}{\frac{x_2}{x_1}} \times \\ 
     & \phantom{= [x_3^0 x_2^0 x_1^0 z^n] \Biggl( \quad} \times 
     \undersetbrace{\widehat{E}_3(x_1)}{
     \frac{1}{3}\left( 
     e^{x_1^{1/3}} + 2 e^{-\frac{x_1^{1/3}}{2}} 
     \cos\left(\frac{\sqrt{3} \cdot x_1^{1/3}}{2}\right) 
     \right) 
     } 
     \Biggr). 
\end{align*} 
Similarly, the following related sequence cases forming particular 
expansions of these binomial coefficient variants are generated by 
\begin{align*} 
\binom{8n}{4n} & = 
     \frac{2^{16n}}{(4n)!} \times 
     \mathsmaller{\Pochhammer{\frac{1}{8}}{n} 
     \Pochhammer{\frac{3}{8}}{n} \Pochhammer{\frac{5}{8}}{n} 
     \Pochhammer{\frac{7}{8}}{n}} \\ 
     & = 
     [x_1^0 x_2^0 x_3^0 x_4^0 z^n] \Biggl( 
     \ConvGF{n}{8}{7}{\frac{16z}{x_4}} \ConvGF{n}{8}{5}{\frac{x_4}{x_3}} 
     \ConvGF{n}{8}{3}{\frac{x_3}{x_2}} \times \\ 
     & \phantom{= [x_1^0 x_2^0 x_3^0 x_4^0 z^n] \Biggl( \quad} \times 
     \ConvGF{n}{8}{1}{\frac{x_2}{x_1}} \times 
     \undersetbrace{\widehat{E}_4(x_1) = E_{4,1}(x_1)}{
     \frac{1}{2}\left( 
     \cos\left(x_1^{1/4}\right) + \cosh\left(x_1^{1/4}\right) 
     \right) 
     } 
     \Biggr) \\ 
     & = 
     \frac{2^{4n}}{(4n)!} \times 
     \AlphaFactorial{8n-7}{8} \AlphaFactorial{8n-5}{8} 
     \AlphaFactorial{8n-3}{8} \AlphaFactorial{8n-3}{8} \\ 
     & = 
     [x_1^0 x_2^0 x_3^0 x_4^0 z^n] \Biggl( 
     \ConvGF{n}{-8}{8n-7}{\frac{16z}{x_4}} 
     \ConvGF{n}{-8}{8n-5}{\frac{x_4}{x_3}} \times \\ 
     & \phantom{= [x_1^0 x_2^0 x_3^0 x_4^0 z^n] \Biggl( \quad} \times 
     \ConvGF{n}{-8}{8n-3}{\frac{x_3}{x_2}} 
     \ConvGF{n}{-8}{8n-1}{\frac{x_2}{x_1}} \times 
     \widehat{E}_4(x_1) 
     \Biggr). 
\end{align*} 

\sublabel{Generating the subfactorial function (sequence of derangements)} 
Another pair of convergent-based generating function identities 
enumerating the sequence of 
\emph{subfactorials}, $\left(!n\right)_{n \geq 1}$, 
or \emph{derangements}, $\left(n\?\right)_{n \geq 1}$, 
are expanded for $n \geq 1$ as follows 
(see Example \ref{remark_FactSumIdents_SubfactorialSums_ConvIdents} and the 
related examples cited in 
Section \ref{subsubSection_Apps_Example_SumsFactFn_Seqs}) 
(\cite[\S 5.3]{GKP}, \cite[\cf \S 8.4]{NISTHB}, \seqnum{A000166}): 
\begin{align*} 
\tagtext{Subfactorial Function} 
!n & \defequals 
     n! \times \sum_{i=0}^{n} \frac{(-1)^{i}}{i!} 
     \quad \seqmapsto{A000166} 
     \left(0, 1, 2, 9, 44, 265, 1854, 14833, \ldots \right) \\ 
     & \phantom{:} = 
     [z^n x^0] \left( 
     \frac{e^{-x}}{(1-x)} \times \ConvGF{n}{-1}{n}{\frac{z}{x}} 
     \right) \\ 
     & \phantom{:} = 
     [x^0 z^n] \left( 
     \frac{e^{-x}}{(1-x)} \times \ConvGF{n}{1}{1}{\frac{z}{x}} 
     \right). 
\end{align*} 

\begin{remark}[Laplace-Borel Transformations of Formal Power Series] 
\label{remark_Formal_Laplace-Borel_Transforms} 
The sequence of subfactorials is enumerated through the 
previous equations as the diagonals of generating function products 
where the rational convergent functions, $\ConvGF{n}{\alpha}{R}{z}$, 
generate the sequence multiplier of $n!$ 
corresponding to the (formal) \emph{Laplace transform}, 
$\mathcal{L}(f(t); z)$, defined by the integral transformations in the next 
equations 
(\cite[\cf \S 2.2]{FLAJOLET80B}, \cite[\S B.4]{ACOMB-BOOK},
\cite[p.\ 566]{GKP}). 
In this case the integral formulas are 
applied termwise to the power series given by the 
exponential generating function, $f(x) \defequals e^{-x} / (1-x)$, 
for this sequence. 
\begin{align*} 
\tagtext{Laplace-Borel Transformation Integrals} 
\mathcal{L}(\widehat{F}; z) & = 
     \phantom{\frac{1}{2\pi}} \int_0^{\infty} e^{-t} \widehat{F}(tz) dt \\ 
\mathcal{L}^{-1}(\widetilde{F}; z) & = 
     \frac{1}{2\pi} \int_0^{2\pi} 
     \widetilde{F}\left(-z e^{-\imath s}\right) 
     e^{-e^{\imath s}} ds. 
\end{align*} 
The applications cited in 
Section \ref{subsubSection_Apps_Example_SumsFactFn_Seqs} and 
Section \ref{subsubSection_Apps_Example_SumsOfPowers_Seqs} 
in this article below 
employ this particular generating function technique to enumerate the 
factorial function multipliers provided by 
these rational convergent functions in several particular cases of 
sequences involving finite sums over factorial functions, 
sums of powers sequences, and new forms of approximate 
generating functions for the binomial coefficients and 
sequences of binomials. 
\end{remark} 

\subsection{Examples: Expanding arithmetic progressions of the 
            single factorial function} 
\label{subsubSection_Apps_ArithmeticProgs_of_the_SgFactFns} 

One application suggested by the results in the previous subsection 
provides $a$-fold reductions of the 
$h$-order series approximations otherwise required to 
exactly enumerate 
arithmetic progressions of the single factorial function according to the 
next result. 
\begin{align} 
\label{eqn_AKPlusB_FactFn_Conv_Ident-stmt_v1} 
\left(an+r\right)! & = 
     [z^{an+r}] \ConvGF{h}{-1}{an+r}{z},\ 
     \forall n \geq 1, a \in \mathbb{Z}^{+}, 0 \leq r < a, 
     \forall h \geq an+r 
\end{align} 
The arithmetic progression sequences of the single factorial function 
formed in the 
particular special cases when $a \defequals 2, 3$ 
(and then for particular cases of $a \defequals 4,5$) 
expanded in the 
examples cited below illustrate the utility to these convergent-based 
formal generating function approximations. 

\begin{prop}[Factorial Function Multiplication Formulas]
The statement of 
\emph{Gauss's multiplication formula} for the gamma function yields the 
following decompositions of the single factorial functions, $(an+r)!$, 
into a finite product over $a$ of the integer-valued multiple factorial 
sequences defined by \eqref{eqn_nAlpha_Multifact_variant_rdef} for 
$n \geq 1$ and whenever $a \geq 2$ and $0 \leq r < a$ 
are fixed natural numbers 
(\cite[\S5.5(iii)]{NISTHB}, \cite[\S 2]{ATLASOFFUNCTIONS}, 
\cite{WOLFRAMFNSSITE-INTRO-FACTBINOMS}): 
\begin{align} 
\label{eqn_prop_factfn_mult_formulas} 
(an+r)! & = 
     \undersetbrace{ \mathsmaller{(an+r)! = 
     \prod\limits_{i=0}^{a-1} \AlphaFactorial{an+r-i}{a} = 
     \prod\limits_{i=0}^{a-1} \pn{n}{-a}{an+r-i} }}{ 
     \AlphaFactorial{an+r}{a} \times \AlphaFactorial{an+r-1}{a} 
     \times \cdots \times 
     \AlphaFactorial{an+r-a+1}{a} 
     } \\ 
\notag 
(an+r)! & = 
     \undersetbrace{ \mathsmaller{(an+r)! = 
     \prod\limits_{i=1}^{a} a^{n} \times \Pochhammer{\frac{r+i}{a}}{n} = 
     \prod\limits_{i=1}^{a} \pn{n}{a}{r+i} } }{ 
     r! \cdot a^{an} \Pochhammer{\frac{1+r}{a}}{n} 
     \Pochhammer{\frac{2+r}{a}}{n} \times \cdots \times 
     \Pochhammer{\frac{a-1+r}{a}}{n} 
     \Pochhammer{\frac{a+r}{a}}{n} 
     } \\ 
\notag 
     & = 
     r! \cdot a^{an} \Pochhammer{1+\frac{r}{a}}{n} 
     \Pochhammer{1+\frac{r-1}{a}}{n} \times \cdots \times 
     \Pochhammer{1+\frac{r-a+1}{a}}{n},\ 
     \forall a,n \in \mathbb{Z}^{+}, r \geq 0. 
\end{align} 
\end{prop}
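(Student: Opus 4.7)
The plan is to prove the two factorizations in \eqref{eqn_prop_factfn_mult_formulas} separately: the first by a direct combinatorial partition of $\{1,2,\ldots,an+r\}$ into residue classes modulo $a$, and the second via Gauss's multiplication theorem for the gamma function.

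For the first identity, I would partition the integers $\{1,2,\ldots,an+r\}$ into their residue classes modulo $a$. For each $0 \leq i \leq a-1$, the residue class of $(r - i) \bmod a$ consists of an arithmetic progression with common difference $a$ whose largest element is $an + r - i$. The product of the elements in that progression is precisely $\AlphaFactorial{an+r-i}{a}$ by the recursive definition in \eqref{eqn_nAlpha_Multifact_variant_rdef}, and equals $\pn{n}{-a}{an+r-i}$ by unfolding the descending product in \eqref{eqn_GenFact_product_form}. Since the residue classes partition the original index set, multiplying the $a$ resulting $\alpha$-factorials recovers $(an+r)!$. A short verification is needed in the edge cases $i>r$, where the residue wraps around and the smallest element of the progression is $a+r-i$ rather than $r-i$, to confirm that each factor still contains exactly $n$ or $n+1$ terms as required.

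For the second identity, I would first use the identity $\pn{n}{a}{R} = a^{n} \Pochhammer{R/a}{n}$ established in Section \ref{subSection_Intro_Examples_DivergentCFracOGFs} to rewrite $\pn{n}{a}{r+i} = a^{n}\Pochhammer{(r+i)/a}{n}$, so that the right-hand-side becomes
\[
r!\cdot a^{an}\prod_{i=1}^{a}\frac{\Gamma\!\left(n+(r+i)/a\right)}{\Gamma\!\left((r+i)/a\right)}.
\]
Then I would apply Gauss's multiplication theorem,
\[
\prod_{k=0}^{a-1}\Gamma\!\left(z + \tfrac{k}{a}\right) = (2\pi)^{(a-1)/2}\, a^{1/2 - az}\,\Gamma(az),
\]
twice: once with $z = (r+1)/a$ to evaluate the denominator product as $(2\pi)^{(a-1)/2} a^{-r-1/2}\, r!$, and once with $z = n + (r+1)/a$ to evaluate the numerator product as $(2\pi)^{(a-1)/2} a^{-an-r-1/2}(an+r)!$. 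Dividing, the $(2\pi)^{(a-1)/2}$ factors cancel, the surviving power of $a$ collapses to $a^{-an}$, and the $r!$ cancels with the prefactor, leaving exactly $(an+r)!$ after multiplication by the $r!\cdot a^{an}$ out front.

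The main obstacle is purely bookkeeping: tracking the exponents of $a$ and the $(2\pi)^{(a-1)/2}$ factors through the two applications of Gauss's formula, and verifying that the indexing conventions (whether $i$ runs from $0$ to $a-1$ in the first formula and from $1$ to $a$ in the second, and which residue representative is picked when $i>r$) are correctly aligned with the statement. No new analytic input beyond the elementary gamma identity and the definition of the Pochhammer $\alpha$-symbol is required, so once the indexing is pinned down each cancellation is mechanical.
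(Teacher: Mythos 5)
Your argument is correct and, for the first factorization, is essentially the paper's own: the paper writes the single factorial as a product of $a$ distinct $a$-factorial functions (via $n! = n!!\,(n-1)!!$, $n! = n!!!\,(n-1)!!!\,(n-2)!!!$, and so on), which is precisely your partition of $\{1,2,\ldots,an+r\}$ into residue classes modulo $a$. For the second factorization the paper does not apply Gauss's $\Gamma$-multiplication theorem directly; it cites the Pochhammer-symbol multiplication formula $\Pochhammer{x}{an+r} = \Pochhammer{x}{r} \cdot a^{an} \prod_{j=0}^{a-1} \Pochhammer{\frac{x+j+r}{a}}{n}$ with $x \defequals 1$ and $(an+r)! = \Pochhammer{1}{an+r}$, whereas you re-derive exactly that identity in the case $x=1$ by applying $\prod_{k=0}^{a-1}\Gamma\left(z+\tfrac{k}{a}\right) = (2\pi)^{(a-1)/2} a^{1/2-az}\Gamma(az)$ at $z=(r+1)/a$ and $z = n+(r+1)/a$; your bookkeeping of the powers of $a$ and the $(2\pi)^{(a-1)/2}$ factors is right, so your version is self-contained where the paper's is by citation, and the two are otherwise the same computation. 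One concrete caveat in your sketch: the term-by-term identification $\AlphaFactorial{an+r-i}{a} = \pn{n}{-a}{an+r-i}$ (echoing the brace in the statement) holds only when $r-i \in \{0,1\}$ or when $i>r$; for $i \leq r-2$ the $a$-factorial carries the extra factor $r-i$, and in general $\prod_{i=0}^{a-1}\pn{n}{-a}{an+r-i} = (an+r)!/r!$, so the $p_n$-form of the first braced identity is exact only for $r \in \{0,1\}$. This does not affect the two factorizations you actually prove (which are all the paper's proof establishes as well), but the unfolding step should be restricted or corrected accordingly rather than asserted for all $i$.
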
 
\begin{proof} 
The first identity corresponds to the expansions of the 
single factorial function by a product of $\alpha$ distinct 
$\alpha$-factorial functions for any fixed integer $\alpha \geq 2$ in the 
following forms: 
\begin{align*} 
n! & = n!! \cdot (n-1)!! \\ 
     & = 
     n!!! \cdot (n-1)!!! \cdot (n-2)!!! \\ 
     & = 
     \prod_{i=0}^{\alpha-1} \AlphaFactorial{n-i}{\alpha},\ 
     \alpha \in \mathbb{Z}^{+}. 
\end{align*} 
The expansions of the last two identities stated above also follow from the 
known \emph{multiplication formula} for the Pochhammer symbol 
expanded by the next equation \cite{WOLFRAMFNSSITE-INTRO-FACTBINOMS} 
for any fixed integers $a \geq 1$ and $r \geq 0$, and where 
$(an+r)! = \Pochhammer{1}{an+r}$ by 
Lemma \ref{lemma_GenConvFn_EnumIdents_pnAlphaRSeq_idents_combined_v1}. 
\begin{align*} 
\tagtext{Pochhammer Symbol Multiplication Formula} 
\Pochhammer{x}{an+r} & = 
     \Pochhammer{x}{r} \times a^{an} \times 
     \prod_{j=0}^{a-1} \Pochhammer{\frac{x+j+r}{a}}{n} 
\end{align*} 
The two identities involving the corresponding products of the 
sequences from \eqref{eqn_GenFact_product_form} 
provided by the braced formulas in 
\eqref{eqn_prop_factfn_mult_formulas} 
follow similarly from the lemma, and moreover, 
lead to several direct expansions of the convergent-function-based 
product sequences identities expanded in the next examples. 
\end{proof} 

\subsubsection{Expansions of arithmetic progression sequences involving the 
               double factorial function ($a := 2$)} 
In the particular cases where $a \defequals 2$ (with $r \defequals 0, 1$), 
we obtain the following forms of the corresponding alternate expansions of 
\eqref{eqn_AKPlusB_FactFn_Conv_Ident-stmt_v1} 
enumerated by the diagonal coefficients of the next 
convergent-based product generating functions for all $n \geq 1$ 
(\cite[\cf \S 2]{ATLASOFFUNCTIONS}, \seqnum{A010050}, \seqnum{A009445}): 
\begin{align*} 
\tagtext{Double Factorial Function Expansions} 
(2n)! 
      & = 2^{n} n! \times (2n-1)!! \\ 
      & = [z^n] [x^0] \left( 
          \ConvGF{n}{-1}{n}{\frac{2z}{x}} 
          \ConvGF{n}{-2}{2n-1}{x} 
          \right) \\ 
      & = 2^{n} n! \times 2^{n} \Pochhammer{1/2}{n} \\ 
      & = [x^0 z^n] \left( 
          \ConvGF{n}{1}{1}{\frac{2z}{x}} \ConvGF{n}{2}{1}{x} 
          \right) \\ 
(2n+1)! 
      & = 2^{n} n! \times (2n+1)!! \\ 
      & = [z^n] [x^0] \left( 
          \ConvGF{n}{-1}{n}{\frac{2z}{x}} 
          \ConvGF{n}{-2}{2n+1}{x} 
          \right) \\ 
      & = 2^{n} n! \times 2^{n} \Pochhammer{3/2}{n} \\ 
      & = [x^1 z^n] \left( 
          \ConvGF{n}{1}{1}{\frac{2z}{x}} \ConvGF{n}{2}{1}{x} 
          \right) \\ 
      & = [x^0 z^n] \left( 
          \ConvGF{n}{1}{1}{\frac{2z}{x}} \ConvGF{n}{2}{3}{x} 
          \right). 
\end{align*} 

\subsubsection{Expansions of arithmetic progression sequences involving the 
               triple factorial function ($a := 3$)}

When $a \defequals 3$ we similarly obtain the next few 
alternate expansions generating the triple factorial products 
for the arithmetic progression sequences in 
\eqref{eqn_AKPlusB_FactFn_Conv_Ident-stmt_v1} 
stated in the following equations for any $n \geq 2$ 
by extending the constructions of the identities for the 
expansions of the double factorial products in the previous equations 
(\cite[\S 2]{ATLASOFFUNCTIONS}, 
\seqnum{A100732}, \seqnum{A100089}, \seqnum{A100043}): 
\begin{align*} 
\tagtext{Triple Factorial Function Expansions} 
(3n)! & = (3n)!!! \times (3n-1)!!! \times (3n-2)!!! \\ 
      & = [z^n] [x_2^0 x_1^0] \left( 
          \ConvGF{n}{-1}{n}{\frac{3z}{x_2}} 
          \ConvGF{n}{-3}{3n-1}{\frac{x_2}{x_1}} 
          \ConvGF{n}{-3}{3n-2}{x_1} 
          \right) \\ 
      & = 3^n n! \times 3^n \Pochhammer{2/3}{n} \times 
          3^n \Pochhammer{1/3}{n} \\ 
      & = [x_1^{0} x_2^{0} z^{n}] \left( 
          \ConvGF{n}{1}{1}{\frac{3z}{x_2}} 
          \ConvGF{n}{3}{1}{\frac{x_2}{x_1}} 
          \ConvGF{n}{3}{2}{x_1} 
          \right) \\ 
(3n+1)! & = (3n)!!! \times (3n-1)!!! \times (3(n+1)-2)!!! \\ 
        & = [z^n] [x_2^0 x_1^{-1}] \Biggl( 
          \ConvGF{n}{-1}{n}{\frac{3z}{x_2}} 
          \ConvGF{n}{-3}{3n-1}{\frac{x_2}{x_1}} \times \\ 
        & \phantom{[z^n] [x_2^0 x_1^{-1}] \Biggl(\ } \times 
          \ConvGF{n}{-3}{3n+1}{x_1} 
          \Biggr) \\ 
      & = 3^n n! \times 3^n \Pochhammer{2/3}{n} \times 
          3^n \Pochhammer{4/3}{n} \\ 
      & = [x_1^{0} x_2^{0} z^{n}] \left( 
          \ConvGF{n}{1}{1}{\frac{3z}{x_2}} 
          \ConvGF{n}{3}{4}{\frac{x_2}{x_1}} 
          \ConvGF{n}{3}{2}{x_1} 
          \right) \\ 
(3n+2)! & = (3n)!!! \times (3(n+1)-1)!!! \times (3(n+1)-2)!!! \\ 
        & = [z^n] [x_2^{-1} x_1^0] \Biggl( 
          \ConvGF{n}{-1}{n}{\frac{3z}{x_2}} 
          \ConvGF{n}{-3}{3n+2}{\frac{x_2}{x_1}} \times \\ 
        & \phantom{= [z^n] [x_2^{-1} x_1^0] \Biggl(\ } \times 
          \ConvGF{n}{-3}{3n+1}{x_1} 
          \Biggr) \\ 
      & = 2 \times 3^n n! \times 3^n \Pochhammer{5/3}{n} \times 
          3^n \Pochhammer{4/3}{n} \\ 
      & = [x_1^{1} x_2^{0} z^{n}] \left( 
          \ConvGF{n}{1}{1}{\frac{3z}{x_2}} 
          \ConvGF{n}{3}{4}{\frac{x_2}{x_1}} 
          \ConvGF{n}{3}{2}{x_1} 
          \right). 
\end{align*} 

\subsubsection{Other special cases involving the 
               quadruple and quintuple factorial functions 
	       ($ a:= 4,5$)}
               
\label{subsubSection_SpCaseArithProgsOfSgFactFn_alphaEq45} 

The additional forms of the diagonal-coefficient generating functions 
corresponding to the special cases of the sequences in 
\eqref{eqn_AKPlusB_FactFn_Conv_Ident-stmt_v1} where 
$(a, r) \defequals (4, 2)$ and $(a, r) \defequals (5, 3)$, respectively 
involving the \emph{quadruple} and \emph{quintuple factorial} functions 
are also cited in the next equations to further illustrate the procedure 
outlined by the previous two example cases. 
\begin{align*} 
(4n+2)! & = (4n)!!!! \times (4n-1)!!!! \times (4(n+1)-2)!!!! \times 
            (4(n+1)-3)!!!! \\ 
      & = [z^n] [x_3^{0} x_2^{-1} x_1^0] \Biggl( 
          \ConvGF{n}{-1}{n}{\frac{4z}{x_3}} 
          \ConvGF{n}{-4}{4n-1}{\frac{x_3}{x_2}} \times \\ 
      & \phantom{= [z^n] [x_3^{0} x_2^0 x_1^0] \Biggl(} \times 
          \ConvGF{n}{-4}{4n+2}{\frac{x_2}{x_1}} 
          \ConvGF{n}{-4}{4n+1}{x_1} 
          \Biggr) \\ 
      & = 
      2 \times 4^{4n} \times 
      \Pochhammer{1}{n} \Pochhammer{3/4}{n} 
      \Pochhammer{3/2}{n} \Pochhammer{5/4}{n} \\ 
      & = [x_1^0 x_2^1 x_3^0 z^n] \Biggl( 
          \ConvGF{n}{1}{1}{\frac{4z}{x_3}} 
          \ConvGF{n}{4}{3}{\frac{x_3}{x_2}} 
          \ConvGF{n}{4}{2}{\frac{x_2}{x_1}} \times \\ 
      & \phantom{= [x_1^0 x_2^1 x_3^0 z^n] \Biggl(} \times 
          \ConvGF{n}{4}{1}{x_1} 
          \Biggr),\ n \geq 2 \\ 
(5n+3)! & = (5n)!_{(5)} \times (5n-1)!_{(5)} \times (5(n+1)-2)!_{(5)} \times 
            (5(n+1)-3)!_{(5)} \times (5n+1)!_{(5)} \\ 
        & = [z^n] [x_4^0 x_3^{-1} x_2^0 x_1^0] \Biggl( 
            \ConvGF{n}{-1}{n}{\frac{5z}{x_4}} 
            \ConvGF{n}{-5}{5n-1}{\frac{x_4}{x_3}} \times \\ 
        & \phantom{= [z^n] [x_4^0 x_3^{-1} x_2^0 x_1^0] \Biggl( } \times 
            \ConvGF{n}{-5}{5n+3}{\frac{x_3}{x_2}} 
            \ConvGF{n}{-5}{5n+2}{\frac{x_2}{x_1}} \times \\ 
        & \phantom{= [z^n] [x_4^0 x_3^{-1} x_2^0 x_1^0] \Biggl( } \times 
            \ConvGF{n}{-5}{5n+1}{x_1} 
            \Biggr) \\ 
      & = 
      6 \times 5^{5n} \times 
      \Pochhammer{1}{n} \Pochhammer{4/5}{n} \Pochhammer{8/5}{n} 
      \Pochhammer{7/5}{n} \Pochhammer{6/5}{n} \\ 
      & = [x_1^0 x_2^0 x_3^1 x_4^0 z^n] \Biggl( 
          \ConvGF{n}{1}{1}{\frac{5z}{x_4}} 
          \ConvGF{n}{5}{4}{\frac{x_4}{x_3}} 
          \ConvGF{n}{5}{3}{\frac{x_3}{x_2}} \times \\ 
      & \phantom{= [x_1^0 x_2^0 x_3^1 x_4^0 z^n] \Biggl(} \times 
          \ConvGF{n}{5}{2}{\frac{x_2}{x_1}} 
          \ConvGF{n}{5}{1}{x_1} 
          \Biggr),\ n \geq 2 
\end{align*} 
Convergent-based generating function identities enumerating 
specific expansions corresponding to other cases of 
\eqref{eqn_AKPlusB_FactFn_Conv_Ident-stmt_v1} 
when $a \defequals 4,5$ are given in 
\cite{SUMMARYNBREF-STUB}. 

\begin{remark}[Generating Arithmetic Progressions of a Sequence] 
\label{remark_GFArithmeticProg_formulas} 
The truncated power series approximations generating the 
single factorial functions 
formulated in the last few special case examples expanded in this section 
are also compared to the known results for 
extracting arithmetic progressions 
from any formal ordinary power series generating function, 
$F(z) = \sum_n f_n z^n$, of an arbitrary sequence, 
$\langle f_n \rangle_{n=0}^{\infty}$, 
through the primitive $a^{th}$ roots of unity, 
$\omega_a \defequals \exp\left(2\pi\imath / a\right)$, 
stated in the references as 
(\cite[\S 1.2.9]{TAOCPV1}, \cite[Ex.\ 1.26, p.\ 84]{ADVCOMB}) 
\begin{align*} 
F_{an+b}(z) & := \sum_{n \geq 0} f_{an+b} z^{an+b} = 
     \frac{1}{a} \sum_{0 \leq m < a} \omega_a^{-mb} 
     F\left(\omega_a^{m}\right), 
\end{align*} 
for integers $a \geq 2$ and $0 \leq b < a$ 
(compare to Remark \ref{remark_Lemmas_ArithmeticProgGFs_nOvermFloored} in 
Section \ref{subSection_Apps_and_Examples_StmtsOfLemmas}). 
This formula is also employed in the special cases of the 
exponential series generating functions defined in 
\eqref{eqn_HybridDiagCoeffHPGFs_BinomCoeff_Examples-exps_ExpGFs_v2} of the 
previous subsection. 
\end{remark} 

\subsection{Examples: Generalized superfactorial function products and 
            relations to the Barnes $G$-function} 

The \emph{superfactorial function}, $S_1(n)$, 
also denoted by $S_{1,0}(n)$ in the notation of 
\eqref{eqn_SAlphadn_GenSuperFactFnSeqs_product_based_def_v1} below, 
is defined for integers $n \geq 1$ 
by the factorial products (\seqnum{A000178}):
\begin{align*} 
\tagtext{Superfactorial Function} 
S_1(n) & \defequals \prod_{k \leq n} k! 
     \quad \seqmapsto{A000178} 
     \left(1, 2, 12, 288, 34560, 24883200, \ldots \right). 
\end{align*} 
These superfactorial functions are given in terms of the 
\emph{Barnes G-function}, $G(z)$, for $z \in \mathbb{Z}^{+}$ through the 
relation $S_1(n) = G(n+2)$. 
The Barnes G-function, $G(z)$, corresponds to a so-termed 
\quotetext{\emph{double gamma function}} satisfying a 
functional equation of the following form 
for natural numbers $n \geq 1$ 
\cite[\S 5.17]{NISTHB} \cite{CONTRIB-THEORY-BARNESGFN}: 
\begin{equation*} 
\tagtext{Barnes G-Function} 
G(n+2) = \Gamma(n+1) G(n+1) \Iverson{n > 1} + \Iverson{n = 1}. 
\end{equation*} 
We can similarly expand the superfactorial function, $S_1(n)$, by 
unfolding the factorial products in the previous definition recursively 
according to the formulas given in the next equation. 
\begin{align*} 
S_1(n) & = 
     n! \cdot (n-1)! \times \cdots \times (n-k+1)! \cdot S_1(n-k),\ 
     0 \leq k < n 
\end{align*} 
The product sequences over the single factorial functions 
formed by the last equations then 
lead to another application of the 
diagonal-coefficient product generating functions 
involving the rational convergent 
functions that enumerate the functions, $(n-k)!$, when $n-k \geq 1$. 
In particular, these particular cases of the 
diagonal coefficient, Hadamard-product-like sequences involving the 
single factorial function are generated as the coefficients 
\begin{align*} 
S_1(n) = 
     \left([z^{n}] \ConvGF{n}{-1}{n}{z}\right) & \times 
     \left([z^{n}] z \cdot \ConvGF{n}{-1}{n-1}{z}\right) 
     \times \\ 
     & \times 
     \left([z^{n}] z^2 \cdot \ConvGF{n}{-1}{n-2}{z}\right) 
     \times \cdots \times \\ 
     & \times 
     \left([z^{n}] z^{n} \cdot \ConvGF{n}{-1}{1}{z}\right) \\ 
S_1(n) = 
     \left([z^{n}] \ConvGF{n}{1}{1}{z}\right) 
     \phantom{_{+1}-} & \times 
     \left([z^{n}] z \cdot \ConvGF{n}{1}{1}{z}\right) 
     \times \\ 
     & \times 
     \left([z^{n}] z^2 \cdot \ConvGF{n}{1}{1}{z}\right) 
     \times \cdots \times 
     \left([z^{n}] z^{n} \cdot \ConvGF{n}{1}{1}{z}\right). 
\end{align*} 
Stated more precisely, the 
superfactorial sequence is generated by the following 
finite, rational products of the generalized convergent functions 
for any $n \geq 2$: 
\begin{align*} 
\tagonce\label{eqn_SuperFactFn_S1n_RationalHP-DiagGF_ProductIdents-stmts_v1} 
S_1(n) 
     & = 
     [x_1^{-1} x_2^{-1} \cdots x_{n-1}^{-1} x_n^{n}] 
     \Biggl( 
     \prod_{i=0}^{n-2} 
     \ConvGF{n}{-1}{n-i}{\frac{x_{n-i}}{x_{n-i-1}}} \times 
     \ConvGF{n}{-1}{1}{x_1} 
     \Biggr) \\ 
     & = 
     [x_1^{-1} x_2^{-1} \cdots x_{n-1}^{-1} x_n^{n}] 
     \Biggl( 
     \prod_{i=0}^{n-2} 
     \ConvGF{n}{1}{1}{\frac{x_{n-i}}{x_{n-i-1}}} \times 
     \ConvGF{n}{1}{1}{x_1} 
     \Biggr). 
\end{align*} 

\subsubsection{Generating generalized superfactorial product sequences} 

Let the more general superfactorial functions, $S_{\alpha,d}(n)$, 
forming the analogous products of the integer-valued 
multiple, $\alpha$-factorial function cases from 
\eqref{eqn_nAlpha_Multifact_variant_rdef} 
correspond to the expansions defined by the next equation. 
\begin{align*} 
\tagonce\label{eqn_SAlphadn_GenSuperFactFnSeqs_product_based_def_v1} 
S_{\alpha,d}(n) & \defequals 
     \prod_{j=1}^{n} (\alpha j - d)!_{(\alpha)},\ 
     n \geq 1, \alpha \in \mathbb{Z}^{+}, 0 \leq d < \alpha 
\end{align*} 
Observe that the corollary of 
Lemma \ref{lemma_GenConvFn_EnumIdents_pnAlphaRSeq_idents_combined_v1} 
cited in \eqref{eqn_AlphaFactFn_anm1_SpCase_SeqIdents-stmts_v0} 
implies that 
whenever $n \geq 1$, and for any fixed $\alpha \in \mathbb{Z}^{+}$, 
we immediately obtain the next identity 
corresponding to the so-termed 
\quotetext{ordinary} case of these 
superfactorial functions, $S_1(n) = S_{1,0}(n)$, 
in the notation for these sequences defined above. 
\begin{align*} 
S_1(n) = \alpha^{-\binom{n+1}{2}} 
     \prod_{j=1}^{n} (\alpha j)!_{(\alpha)} = 
     \alpha^{-\binom{n+1}{2}} S_{\alpha,0}(n),\ 
     \forall \alpha \in \mathbb{Z}^{+},\ n \geq 1. 
\end{align*} 
For other cases of the parameter $d > 0$, the 
generalized superfactorial function products defined by 
\eqref{eqn_SAlphadn_GenSuperFactFnSeqs_product_based_def_v1} 
are enumerated in a similar fashion to the previous 
constructions of the convergent-based generating function 
identities expanded by 
\eqref{eqn_SuperFactFn_S1n_RationalHP-DiagGF_ProductIdents-stmts_v1} 
in the following forms for $n \geq 1$, $\alpha \in \mathbb{Z}^{+}$, 
and any fixed $0 \leq d < \alpha$: 
\begin{align*} 
S_{\alpha,d}(n) 
     & = 
     [x_1^{-1} x_2^{-1} \cdots x_{n-1}^{-1} x_n^{n}] 
     \Biggl( 
     \prod_{i=0}^{n-2} 
     \ConvGF{n}{-\alpha}{\alpha(n-i)-d}{\frac{x_{n-i}}{x_{n-i-1}}} 
     \ConvGF{n}{-\alpha}{\alpha-d}{x_1} 
     \Biggr) \\ 
\tagonce\label{eqn_SuperFactFn_S1n_RationalHP-DiagGF_ProductIdents-stmts_v2} 
     & = 
     [x_1^{-1} x_2^{-1} \cdots x_{n-1}^{-1} x_n^{n}] 
     \Biggl( 
     \prod_{i=0}^{n-2} 
     \ConvGF{n}{\alpha}{\alpha-d}{\frac{x_{n-i}}{x_{n-i-1}}} \times 
     \ConvGF{n}{\alpha}{\alpha-d}{x_1} 
     \Biggr). 
\end{align*} 

\subsubsection{Special cases of the 
               generalized superfactorial products and their relations to the 
               Barnes G-function at rational $z$} 

The special case sequences formed by the double factorial products, 
$S_{2,1}(n)$, and the quadruple factorial products, $S_{4,2}(n)$, 
are simplified by \Mm{} to obtain the next 
closed-form expressions given by \cite{SUMMARYNBREF-STUB} 
\begin{align*} 
S_{2,1}(n) & \defequals \prod_{j=1}^{n} (2j-1)!! = 
     \frac{A^{3/2}}{2^{1/24} e^{1/8} \pi^{1/4}} \cdot 
     \frac{2^{n(n+1)/2}}{\pi^{n/2}} \times 
     G\left(n + \frac{3}{2}\right) \\ 
S_{4,2}(n) & \defequals \prod_{j=1}^{n} (4j-2)!!!! = 
     \frac{A^{3/2}}{2^{1/24} e^{1/8} \pi^{1/4}} \cdot 
     \frac{4^{n(n+1)/2}}{\pi^{n/2}} \times 
     G\left(n + \frac{3}{2}\right), 
\end{align*} 
where $A \approx 1.2824271$ denotes \emph{Glaisher's constant} 
\cite[\S 5.17]{NISTHB}, and where the particular constant multiples 
in the previous equation correspond to the special case values, 
$\Gamma(1/2) = \sqrt{\pi}$ and 
$G(3/2) = A^{-3/2} 2^{1/24} e^{1/8} \pi^{1/4}$ 
\cite{CONTRIB-THEORY-BARNESGFN}. 
In addition, 
since the sequences defined by 
\eqref{eqn_SAlphadn_GenSuperFactFnSeqs_product_based_def_v1} 
are also expanded as the products 
\begin{align*} 
S_{\alpha,d}(n) & = 
     \prod_{j=1}^{n} 
     \undersetbrace{= \alpha^{j} \times 
     \frac{\Gamma\left(j+1-\frac{d}{\alpha}\right)}{ 
     \Gamma\left(1-\frac{d}{\alpha}\right)}}{
     \left( 
     \alpha^{j} \times \Pochhammer{1 - \frac{d}{\alpha}}{j} 
     \right) 
     } = 
     \alpha^{\binom{n+1}{2}} G(n+2) \times 
     \prod_{j=1}^{n} \binom{j-\frac{d}{\alpha}}{j}, 
\end{align*} 
further computations with \Mm{} yield the next 
few representative special cases of these generalized superfactorial 
functions when $\alpha \defequals 3, 4, 5$
\cite[\cf \S 2]{CONTRIB-THEORY-BARNESGFN} \cite{SUMMARYNBREF-STUB}: 
\begin{align*} 
\tagtext{Special Case Products} 
S_{3,1}(n) \defequals 
     \prod_{j=1}^{n} (3j-1)!!! & = 
     3^{n(n-1)/2} \left( \frac{2 \cdot G\left(\frac{5}{3}\right)}{ 
     G\left(\frac{8}{3}\right)}\right)^{n} 
     \times \frac{G\left(n+\frac{5}{3}\right)}{G\left(\frac{5}{3}\right)} \\ 
S_{4,1}(n) \defequals  
\prod_{j=1}^{n} (4j-1)!!!! & = 
     4^{n(n-1)/2} \left( \frac{3 \cdot G\left(\frac{7}{4}\right)}{ 
     G\left(\frac{11}{4}\right)}\right)^{n} 
     \times \frac{G\left(n+\frac{7}{4}\right)}{G\left(\frac{7}{4}\right)} \\ 
S_{5,1}(n) \defequals 
\prod_{j=1}^{n} \AlphaFactorial{5j-1}{5} & = 
     5^{n(n-1)/2} \left( \frac{4 \cdot G\left(\frac{9}{5}\right)}{ 
     G\left(\frac{14}{5}\right)}\right)^{n} 
     \times \frac{G\left(n+\frac{9}{5}\right)}{G\left(\frac{9}{5}\right)} \\ 
S_{5,2}(n) \defequals 
\prod_{j=1}^{n} \AlphaFactorial{5j-2}{5} & = 
     5^{n(n-1)/2} \left( \frac{3 \cdot G\left(\frac{8}{5}\right)}{ 
     G\left(\frac{13}{5}\right)}\right)^{n} 
     \times \frac{G\left(n+\frac{8}{5}\right)}{G\left(\frac{8}{5}\right)}. 
\end{align*} 
We are then led to conjecture inductively, without proof 
given in this example, that these sequences satisfy the form of the 
next equation involving the Barnes G-function over the 
rational-valued inputs prescribed according to the 
next formula for $n \geq 1$. 
\begin{align*} 
\tagtext{Generalized Superfactorial Function Identity} 
S_{\alpha,d}(n) 
     & = 
     \frac{\alpha^{\binom{n}{2}} (\alpha - d)^{n}}{ 
     \Gamma\left(2 - \frac{d}{\alpha}\right)^{n}} \times 
     \frac{G\left(n + 2 - \frac{d}{\alpha}\right)}{ 
     G\left(2 - \frac{d}{\alpha}\right)}. 
\end{align*} 

\begin{remark}[Generating Rational-Valued Cases of the Barnes $G$-Function] 
The identities for the $\alpha$-factorial function products given in the 
previous examples suggest further avenues to 
generating other particular forms of the Barnes G-function formed by these 
generalized integer-parameter product sequence cases. 
These functions are generated by extending the constructions of the 
rational generating function methods outlined in this section 
\cite{CONTRIB-THEORY-BARNESGFN,ON-HYPGEOMFNS-PHKSYMBOL}, 
which then suggest additional identities for the Barnes G-functions, 
$G(z+2)$, over rational-valued $z > 0$ 
involving the special function zeros already defined by 
Section \ref{subSection_Intro_GenConvFn_Defs_and_Properties} and in 
Section \ref{subSection_Congruences_for_Series_ModuloIntegers_p}. 
The convergent-based generating function identities stated in the 
previous equations also suggest further applications to 
enumerating specific new identities corresponding to the 
special case constant formulas expanded in 
\cite[\S 2]{CONTRIB-THEORY-BARNESGFN}. 
\end{remark} 

\begin{remark}[Expansions of Hyperfactorial Function Products]  
The generalized superfactorial sequences defined by 
\eqref{eqn_SAlphadn_GenSuperFactFnSeqs_product_based_def_v1} in the 
previous example are also related to the \emph{hyperfactorial function}, 
$H_1(n)$, defined for $n \geq 1$ by the products (\seqnum{A002109}) 
\begin{align*} 
\tagtext{Hyperfactorial Products} 
H_1(n) & \defequals 
     \prod_{1 \leq j \leq n} j^j 
     \quad \seqmapsto{A002109} 
     \left(1, 4, 108, 27648, 86400000, \ldots \right). 
\end{align*} 
The exercises in the reference state additional known formulas 
establishing relations between these 
expansions of the hyperfactorial function defined above, and 
products of the binomial coefficients, including the following identities 
(\cite[\S 5; Ex.\ 5.13, p.\ 527]{GKP}, \seqnum{A001142}): 
\begin{align*} 
\tagtext{Binomial Coefficient Products} 
B_1(n) & \defequals 
     \prod_{k=0}^{n} \binom{n}{k} = 
     \frac{(n!)^{n+1}}{S_1(n)^{2}} = 
     \frac{H_1(n)}{S_1(n)} = 
     \frac{H_1(n)^{2}}{(n!)^{n+1}}. 
\end{align*} 
Statements of congruence properties and other relations 
connecting these sequences are considered in 
\cite{GENWTHM-DBLHYPERSUPER-FACTFNS,
       CONTRIB-THEORY-BARNESGFN,ON-HYPGEOMFNS-PHKSYMBOL}. 
\end{remark} 

\subsection{Examples: Enumerating sequences involving 
            sums of factorial-related functions, 
            sums of factorial powers, and more challenging 
            combinatorial sums involving factorial functions} 
\label{subsubSection_Apps_Example_SumsFactFn_Seqs} 

The coefficients of the convergent-based generating function 
constructions for the factorial product sequences 
given in the previous subsection are compared to the next several identities 
expanding the corresponding sequences of finite sums involving 
factorial functions 
(\cite[\cf \S 3; Ex.\ 3.30 p.\ 168]{ADVCOMB}, 
\seqnum{A003422}, \seqnum{A005165}, \seqnum{A033312},
\seqnum{A001044}, \seqnum{A104344}, \seqnum{A061062}). 
\begin{align*} 
\tagtext{Left Factorials} 
L!n & \defequals \sum_{k=0}^{n-1} k! = 
     [z^n] \left( 
     \frac{z}{(1-z)} \cdot \ConvGF{n}{1}{1}{z} 
     \right) \\ 
\tagtext{Alternating Factorials} 
\af(n) & \defequals \sum_{k=1}^{n} (-1)^{n-k} \cdot k! = 
     [z^n] \left( 
     \frac{1}{(1+z)} \cdot \left(\ConvGF{n}{1}{1}{z} - 1\right) 
     \right) \\ 
\tagonce\label{eqn_FactorialSumIdents_examples_afn_sf2n_sf3n-stmts_v1} 
\Sf_2(n) & \defequals \sum_{k=1}^{n} k \cdot k! = 
       (n+1)! - 1 \\ 
     & \phantom{:} = 
     [x^0 z^n] \left( 
     \frac{1}{(1-z)} \frac{x}{(1-x)^2} 
     \ConvGF{n}{1}{1}{\frac{z}{x}} 
     \right) \\ 
\tagtext{Sums of Single Factorial Powers} 
\Sf_3(n) & \defequals \sum_{k=1}^{n} (k!)^{2} \\ 
     & \phantom{:} = 
     [x^0 z^n] \left( 
     \frac{1}{(1-z)} \times 
     \left( 
     \ConvGF{n}{1}{1}{x} \ConvGF{n}{1}{1}{\frac{z}{x}} - 1 
     \right) 
     \right) \\ 
\Sf_4(n) & \defequals \sum_{k=0}^{n} (k!)^{3} \\ 
     & \phantom{:} = 
     [x_1^0 x_2^0 z^n] \left( 
     \frac{1}{(1-z)} \times 
     \ConvGF{n}{1}{1}{\frac{z}{x_2}} \ConvGF{n}{1}{1}{\frac{x_2}{x_1}} 
     \ConvGF{n}{1}{1}{x_1} + 1 
     \right) 
\end{align*} 
One generalization of the second identity given in 
\eqref{eqn_FactorialSumIdents_examples_afn_sf2n_sf3n-stmts_v1} 
due to Gould is stated in \cite[p.\ 168]{ADVCOMB} 
as the formula 
\begin{align*} 
\sum_{k=0}^{n} \binom{x}{k}^{p} \left(\frac{k!}{x^{k+1}}\right)^{p} 
     \left( (x-k)^{p} - x^{p} \right) & = 
     \binom{x}{n+1}^{p} \left(\frac{(n+1)!}{x^{n+1}}\right)^{p} - 1. 
\end{align*} 
The \href{http://mathworld.wolfram.com/FactorialSums.html}{MathWorld} 
site providing an overview of results for factorial-related sums contains 
references to definitions of several other factorial-related 
finite sums and series in addition to those special cases defined in the 
first few of the labeled equations in 
\eqref{eqn_FactorialSumIdents_examples_afn_sf2n_sf3n-stmts_v1}. 

\subsubsection{Generating sums of double and triple factorial powers} 
The expansion of the second to last sum, denoted $\Sf_3(n)$ in 
\eqref{eqn_FactorialSumIdents_examples_afn_sf2n_sf3n-stmts_v1}, is 
generalized to form the following variants of sums over the squares of the 
$\alpha$-factorial functions, $n!!$ and $n!!!$, 
through the generating function identities given in 
\eqref{eqn_MultFactFn_ConvSeq_def_v2} of the introduction 
(\seqnum{A184877}): 
\begin{align*} 
\tagtext{Sums of Double Factorial Squares} 
\Sf_{3,2}(n) & \defequals \sum_{k=0}^{n} (k!!)^{2} \\ 
\notag 
     & \phantom{:} = 
     [x^0 z^n] \Biggl( 
     \frac{1}{(1-z)} \times \biggl( 
     \ConvGF{n}{2}{2}{x} \ConvGF{n}{2}{2}{\frac{z^2}{x}} \\ 
     & \phantom{\defequals [x^0 z^n] \Biggl(\frac{1}{(1-z)} \times 
                \biggl( \quad} + 
     z \cdot \ConvGF{n}{2}{3}{x} \ConvGF{n}{2}{3}{\frac{z^2}{x}} 
     \biggr) 
     \Biggr) \\ 
\notag 
     & \phantom{:} = 
     [x^0 z^n] \Biggl( 
     \frac{1}{(1-z)} \times \biggl( 
     \ConvGF{n}{2}{2}{x} \ConvGF{n}{2}{2}{\frac{z^2}{x}} \\ 
     & \phantom{\phantom{:} = [x^0 z^n] \Biggl(\frac{1}{(1-z)} \times 
                \biggl( \quad } + 
     z^{-1} \cdot \ConvGF{n}{2}{1}{x} \ConvGF{n}{2}{1}{\frac{z^2}{x}} - 1 
     \biggr) 
     \Biggr) \\ 
\tagtext{Sums of Triple Factorial Squares} 
\Sf_{3,3}(n) & \defequals \sum_{k=0}^{n} (k!!!)^{2} \\ 
\notag 
     & \phantom{:} = 
     [x^0 z^n] \Biggl( 
     \frac{1}{(1-z)} \times \biggl( 
     \ConvGF{n}{3}{3}{x} \ConvGF{n}{3}{3}{\frac{z^3}{x}} \\ 
     & \phantom{\defequals [x^0 z^n] 
       \Biggl( \frac{1}{(1-z)} \times \biggl( \quad} + 
     z^{-1} \cdot \ConvGF{n}{3}{2}{x} \ConvGF{n}{3}{2}{\frac{z^3}{x}} \\ 
     & \phantom{\defequals [x^0 z^n] 
       \Biggl( \frac{1}{(1-z)} \times \biggl( \quad} + 
     z^{-2} \cdot \ConvGF{n}{3}{1}{x} \ConvGF{n}{3}{1}{\frac{z^3}{x}} - 
     1 - \frac{1}{z} 
     \biggr) 
     \Biggr). 
\end{align*} 
The next form of the cube-factorial-power 
sequences, $\Sf_4(n)$, defined in 
\eqref{eqn_FactorialSumIdents_examples_afn_sf2n_sf3n-stmts_v1} 
corresponding to the next sums taken over powers of the 
double factorial function are similarly generated by 
\begin{align*} 
\tagtext{Sums of Double Factorial Cubes} 
\Sf_{4,2}(n) & \defequals \sum_{k=0}^{n} (k!!)^{3} \\ 
     & \phantom{:} = 
     [x_1^0 x_2^0 z^n] \Biggl( 
     \frac{1}{(1-z)} \times \biggl( 
     \ConvGF{n}{2}{2}{\frac{z^2}{x_2}} \ConvGF{n}{2}{2}{\frac{x_2}{x_1}} 
     \ConvGF{n}{2}{2}{x_1} \\ 
     & \phantom{\defequals [x_1^0 x_2^0 z^n] \Biggl( 
                \biggl( \quad } + 
     z^{-1} \cdot 
     \ConvGF{n}{2}{1}{\frac{z^2}{x_2}} \ConvGF{n}{2}{1}{\frac{x_2}{x_1}} 
     \ConvGF{n}{2}{1}{x_1} - \frac{1}{z} 
     \biggr) \\ 
     & \phantom{\defequals [x_1^0 x_2^0 z^n] \Biggl( 
                \biggl( \quad } + 
     2\Biggr). 
\end{align*} 

\subsubsection{Another convergent-based generating function identity} 

The second variant of the factorial sums, denoted $\Sf_2(n)$ in 
\eqref{eqn_FactorialSumIdents_examples_afn_sf2n_sf3n-stmts_v1}, is 
enumerated through an alternate approach provided by the 
more interesting summation identities cited in 
\cite[\S 3, p.\ 168]{ADVCOMB}. 
In particular, we have another pair of identities generating these sums 
expanded as 
\begin{align*} 
(n+1)! - 1 & = (n+1)! \times \sum_{k=0}^{n} \frac{k}{(k+1)!} \\ 
           & = [z^n x^0] \left( 
     \left(\frac{1}{x \cdot (1-x)} - \frac{e^{x}}{x}\right) \times 
     \ConvGF{n+2}{-1}{n+1}{\frac{z}{x}} 
     \right) \\ 
           & = [x^0 z^{n+1}] \left( 
     \left(\frac{1}{(1-x)} - e^{x}\right) \times 
     \ConvGF{n+1}{1}{1}{\frac{z}{x}} 
     \right). 
\end{align*} 

\begin{remark}[Sums of Squares of the Binomial Coefficients] 
We can form related convergent-based generating functions 
enumerating the following polynomial sum 
which is also expanded in terms of the \emph{Legendre polynomials}, 
$P_n(x) = [z^n] (z^2 - 2xz + 1)^{-1/2}$, as follows 
\cite[\S 18.3]{NISTHB} \cite[p.\ 543]{GKP}: 
\begin{align*} 
(x-1)^n \cdot P_n\left(\frac{x+1}{x-1}\right) & = 
     \sum_{0 \leq k \leq n} \binom{n}{k}^2 x^k \\ 
     & = 
     (n!)^2 \times [z^n] I_0\left(2 \sqrt{xz}\right) 
     I_0\left(2 \sqrt{z}\right) \\ 
     & = 
     [x_1^0 x_2^0] [z^n] \Bigl( 
     \ConvGF{n}{1}{1}{\frac{z}{x_1}} \ConvGF{n}{1}{1}{\frac{x_1}{x_2}} 
     \times \\ 
     & \phantom{= [x_2^0 x_1^0] [z^n] \Bigl(\ } \times 
     I_0\left(2 \sqrt{x \cdot x_2}\right) I_0\left(2 \sqrt{x_2}\right) 
     \Bigr) \cdot \Iverson{n \geq 1} + \Iverson{n = 0}. 
\end{align*} 
Other related sums over the alternating squares and cubes of the 
binomial coefficients, $(-1)^k \binom{2n}{k}^2$ and $(-1)^k \binom{2n}{k}^3$, 
are given through products of $\alpha$-factorial functions and 
reciprocals of the single factorial function, and so may also be 
enumerated similarly to the sums in this and in the previous section. 
\end{remark} 

\subsubsection{Enumerating more challenging combinatorial sums involving 
               double factorials} 

The convergent-based generating function identities enumerating the 
sequences stated next in 
Example \ref{example_SumsInvolving_DblFactFns-exps_examples_v1} and 
Example \ref{remark_FactSumIdents_SubfactorialSums_ConvIdents} 
below provide additional examples of the termwise formal 
Laplace-Borel-like transform provided by coefficient extractions 
involving these rational convergent functions outlined by 
Remark \ref{remark_Formal_Laplace-Borel_Transforms}. 

\begin{example} 
\label{example_SumsInvolving_DblFactFns-exps_examples_v1} 
Since we know that $(2k-1)!! = [z^k] \ConvGF{n}{2}{1}{z}$ 
for all $0 \leq k < n$, the terms of the next 
modified product sequences are generated through the 
following related forms obtained from the formal series 
expansions of the convergent generating functions: 
\begin{align*} 
\frac{(k+1)}{k!} \cdot (2k-1)!! & = 
     [x^0] [z^k] \left( 
     \ConvGF{k}{-1}{2k-1}{\frac{z}{x}} \cdot (x+1) e^{x} 
     \right) \\ 
     & = 
     [x^0] [z^k] \left( 
     \ConvGF{k}{2}{1}{\frac{z}{x}} \cdot (x+1) e^{x} 
     \right). 
\end{align*} 
The convergent-based expansions of the next \quotetext{round number} identity 
generating the double factorial function given cited in the reference 
are then easily obtained from the 
previous equations in the following forms 
\cite[\S 4.3]{DBLFACTFN-COMBIDENTS-SURVEY}: 
\begin{align*} 
(2n-1)!! & = 
     \sum_{k=1}^{n} \frac{(n-1)!}{(k-1)!} \cdot k \cdot (2k-3)!! \\ 
     & = 
     (n-1)! \times [x_2^{n}] [x_1^0] \left( 
     \frac{x_2}{(1-x_2)} \times \ConvGF{n}{2}{1}{\frac{x_2}{x_1}} \times 
     (x_1+1) e^{x_1} 
     \right) \\ 
     & = 
     [x_1^0 x_2^0 x_3^{n-1}] \left( 
     \ConvGF{n}{1}{1}{\frac{x_3}{x_2}} 
     \ConvGF{n}{2}{1}{\frac{x_2}{x_1}} \times 
     \frac{(x_1+1)}{(1-x_2)} \cdot e^{x_1} 
     \right). 
\end{align*} 
Related challenges are posed in the statements of several other 
finite sum identities involving the double factorial function cited in the 
references \cite{MAA-FUN-WITH-DBLFACT,DBLFACTFN-COMBIDENTS-SURVEY}. 
\end{example} 

\subsubsection{Other examples of convergent-based generating function 
               identities enumerating the subfactorial function} 
The first convergent-based generating function expansions approximating the 
formal ordinary power series over the 
subfactorial sequence given in 
Section \ref{subsubSection_remark_HybridDiagonalHPGFs} 
are expanded as (\seqnum{A000166}) 
\begin{align*} 
\tagtext{Subfactorial OGF Identities} 
!n & = 
     n! \times \sum_{i=0}^{n} \frac{(-1)^{i}}{i!} = 
     [x^0 z^n] \left( 
     \frac{e^{-x}}{(1-x)} \times \ConvGF{n}{1}{1}{\frac{z}{x}} 
     \right) \\ 
   & = 
     \sum_{k=0}^{n} \binom{n}{k} (-1)^{n-k} k! = 
     [z^n x^n] \left( 
     \frac{(x+z)^{n}}{(1+z)} \times \ConvGF{n}{1}{1}{x} 
     \right). 
\end{align*} 
The constructions of the convergent-based formal power series for the 
ordinary generating functions of the 
subfactorial function, $!n$, outlined in the previous section 
are extended in the next example. 

\begin{example} 
\label{remark_FactSumIdents_SubfactorialSums_ConvIdents}
The next pair of 
alternate, factorial-function-like auxiliary recurrence relations 
from (\cite[\S 5.3-\S 5.4]{GKP}, \cite[\S 4.2]{ADVCOMB}) 
exactly define the subfactorial function when $n \geq 2$. 
The following generating function identities 
then correspond to the respective expansions of the 
generating functions in previous equation involving the 
first-order partial derivatives of the convergent functions, 
$\ConvGF{n}{\alpha}{R}{t}$, with respect to $t$ 
\cite[\cf \S 7.2]{GKP} \cite[\cf \S 2.2]{GFOLOGY}: 
\begin{align*} 
\tagtext{Factorial-Related Recurrence Relations} 
!n & = (n-1) \left( !(n-1) + !(n-2) \right) \\ 
     & = (n-1) \times !(n-1) + (n-2) \times !(n-2) + !(n-2) \\ 
     & = 
     [x_1^0 z^n] \left( 
     \frac{(z^2+z^3) \cdot e^{-x_1}}{x_1 \cdot (1-x_1)} \times 
     \Conv_n^{(1)}\left(1, 1; \frac{z}{x_1}\right) + 
     \frac{z^2 \cdot e^{-x_1}}{(1-x_1)} \times 
     \ConvGF{n}{1}{1}{\frac{z}{x_1}} + 1 
     \right) \\ 
!n & = 
     n \times !(n-1) + (-1)^{n} \\ 
     & = 
     (n-1) \times !(n-1) + !(n-1) + (-1)^{n} \\ 
     & = 
     [x_1^0 z^n] \left( 
     \frac{z^2 \cdot e^{-x_1}}{x_1 \cdot (1-x_1)} \times 
     \Conv_n^{(1)}\left(1, 1; \frac{z}{x_1}\right) + 
     \frac{z \cdot e^{-x_1}}{(1-x_1)} \times 
     \ConvGF{n}{1}{1}{\frac{z}{x_1}} + 
     \frac{1}{(1+z)} 
     \right). 
\end{align*} 
The next sums provide another summation-based recursive formula for the 
subfactorial function derived from the known exponential generating function, 
$\widehat{D}_{n\?}(z) = e^{-z} \cdot (1-z)^{-1}$, for this sequence 
(\cite[\S 5.4]{GKP}, \cite[\S 4.2]{ADVCOMB}). 
\begin{align*} 
\tagtext{Basic Subfactorial Recurrence} 
!n & = 
     n! - \sum_{i=1}^{n} \binom{n}{i} !(n-i) \\ 
     & = 
     n! \times \left(1 - \sum_{i=1}^{n} \frac{1}{i!} \cdot 
     \frac{!(n-i)}{(n-i)!}\right) \\ 
     & = 
     n! \times \left(1 - [x_1^0 x_2^0 x_3^n] \left( 
     (e^{x_3}-1) \ConvGF{n}{1}{1}{\frac{x_3}{x_2 x_1}} 
     \frac{e^{x_2-x_1}}{(1-x_1)} \right) 
     \right) \\ 
     & = 
     [x_x^0 x_2^0 x_3^0 z^n] \left( 
     \ConvGF{n}{1}{1}{\frac{z}{x_3}} \left( \frac{1}{(1-x_3)} - 
     \ConvGF{n}{1}{1}{\frac{x_3}{x_2 x_1}} 
     \frac{e^{x_2-x_1} \cdot (e^{x_3}-1)}{(1-x_1)}
     \right) 
     \right) 
\end{align*} 
The rational convergent-based expansions that generate the last 
equation immediately above then correspond to the effect of 
performing a termwise Laplace-Borel transformation approximating the 
complete integral transform, $\mathcal{L}(\widehat{D}_{n\?}(t); z)$, 
defined by 
Remark \ref{remark_Formal_Laplace-Borel_Transforms}, 
which is related to the regularized sums involving the 
incomplete gamma function given in the examples from 
Section \ref{subSection_Intro_Examples_DivergentCFracOGFs} 
of the introduction. 
\end{example} 

\subsection{Examples: Generating sums of powers of natural numbers, 
            binomial coefficient sums, and sequences of binomials} 
\label{subsubSection_Apps_Example_SumsOfPowers_Seqs} 

\subsubsection{Generating variants of sums of powers sequences} 

As a starting point for the next generating function identities 
that provide expansions of the sums of powers 
sequences defined by 
\eqref{eqn_defs_and_ConvFnExps_of_the_GenSUmsOfPowersSeqs} 
in this section below, 
let $p \geq 2$ be fixed, and 
suppose that $m \in \mathbb{Z}^{+}$. 
The convergent-based generating function 
series over the integer powers, $m^{p}$, 
are generated by an application of the binomial theorem to 
form the next sums: 
\begin{align} 
\label{eqn_mPowPm1_IntPows_BinomCoeffGF_exps-stmts_v1} 
m^{p} - 1 & = 
     (p-1)! \cdot \left( 
     p \times \sum_{k=0}^{p-1} \frac{(m-1)^{p-k}}{k! (p-k)!} 
     \right) \\ 
\label{eqn_mPowPm1_IntPows_BinomCoeffGF_exps-stmts_v2} 
m^{p} - 1 & = 
     [z^{p-1}] [x^0] \Biggl( 
     \ConvGF{p}{-1}{p-1}{\frac{z}{x}} \times (m e^{mx} - e^{x}) 
     \Biggr). 
\end{align} 
Next, consider the generating function expansions 
enumerating the finite sums of the $p^{th}$ power sequences in 
\eqref{eqn_mPowPm1_IntPows_BinomCoeffGF_exps-stmts_v1} summed over 
$0 \leq m \leq n$ as follows \cite[\cf \S 7.6]{GKP}: 
\begin{align*} 
\tagonce\label{eqn_Spn_Conv_and_BNumGF_ident_ex-stmt_v2.1} 
\widetilde{B}_u(w, x) & \defequals 
     \sum_{n \geq 0} \left( 
     \sum_{m=0}^{n} (me^{mx}-e^{x}) u^{m} 
     \right) w^{n} \\ 
     & \phantom{:} = 
     \sum_{n \geq 0} \Biggl( 
     \left(\frac{1}{1-u} + \frac{n e^{nx}}{u e^{x} - 1} - 
     \frac{e^{nx}}{(u e^{x} - 1)^2}\right) e^{x} u^{n+1} \\ 
     & \phantom{:= \sum_{n \geq 0} \Biggl( \quad\ } + 
     \left(\frac{u}{(u e^{x} - 1)^2} - \frac{1}{1-u}\right) e^{x} 
     \Biggr) w^{n} \\ 
     & \phantom{:} = 
     -\frac{u^2w^2 e^{3x} - 2 uw e^{2x} + (u^2w^2 -uw + 1) e^{x}}{ 
     (1-w) (1-uw) (uw e^{x} - 1)^2} \\ 
     & \phantom{:} = 
     \frac{e^{x}}{(1-w)(1-uw)} - 
     \frac{1}{(1-w) (e^{x} uw-1)^{2}} + 
     \frac{1}{(1-w) (e^{x} uw - 1)} \\ 
\widetilde{B}_{a,b,u}(w, x) & \defequals 
     \sum_{n \geq 0} \left( 
     \sum_{m=0}^{n} ((am+b) e^{(am+b) x}-e^{x}) u^{m} 
     \right) w^{n} \\ 
     = & 
     \frac{e^{x} - be^{bx} + 
     \left((b-a)e^{(a+b)x} - 2 e^{(a+1)x} + be^{bx}\right) uw + 
     \left((a-b)e^{(a+b)x} + e^{(2a+1)x} \right) (uw)^2}{ 
     (1-w) (1-uw) (uw e^{ax} - 1)^2} \\ 
     = & 
     \frac{be^{bx} + (a-b) e^{(a+b) x} uw}{(1-w) (uw e^{ax}-1)^{2}} - 
     \frac{e^{x} - 2 e^{(a+1) x} uw + e^{(2a+1) x} (uw)^2}{ 
     (1-w) (1-uw) (uw e^{ax}-1)^{2}}. 
\end{align*} 
We then obtain the next cases of the 
convergent-based generating function identities exactly 
enumerating the corresponding first variants of the 
sums of powers sequences obtained from 
\eqref{eqn_mPowPm1_IntPows_BinomCoeffGF_exps-stmts_v1} 
stated in the following forms \cite[\S 6.5, \S 7.6]{GKP}: 
\begin{align*} 
\tagonce\label{eqn_defs_and_ConvFnExps_of_the_GenSUmsOfPowersSeqs} 
S_p(n) & \defequals 
     \sum_{0 \leq m < n} m^{p} \\ 
\tagonce\label{eqn_Spn_Conv_and_BNumGF_ident_ex-stmt_v1} 
     & \phantom{:} = 
     n + 
     [w^{n-1}] [z^{p-1} x^0] \Biggl( 
     \ConvGF{p}{-1}{p-1}{\frac{z}{x}} \widetilde{B}_1(w, x) 
     \Biggr) \\ 
     & \phantom{:} = 
     n + 
     [w^{n-1}] [x^0 z^{p-1}] \Biggl( 
     \ConvGF{p}{1}{1}{\frac{z}{x}} \widetilde{B}_1(w, x) 
     \Biggr). 
\end{align*} 
A somewhat related set of results for 
variations of more general cases of the 
power sums expanded above is expanded similarly 
for $p \geq 1$, fixed scalars $a,b \neq 0$, and 
any non-zero indeterminate $u$ according to the 
next convergent function identities given by 
\begin{align*} 
\tagtext{Generalized Sums of Powers Sequences} 
S_{p}(u, n) & \defequals 
     \sum_{0 \leq m < n} m^{p} u^{m} \\ 
     & \phantom{:} = 
     \frac{u^{n} - 1}{u-1} + 
     [w^{n-1}] [z^{p-1} x^{0}] \Biggl( 
     \ConvGF{p}{-1}{p-1}{\frac{z}{x}} 
     \widetilde{B}_u(w, x)
     \Biggr) \\ 
\tagonce\label{eqn_Spn_Conv_and_BNumGF_ident_ex-stmt_v2} 
     & \phantom{:} = 
     \frac{u^{n} - 1}{u-1} + 
     [w^{n-1}] [x^0 z^{p-1}] \Biggl( 
     \ConvGF{p}{1}{1}{\frac{z}{x}} 
     \widetilde{B}_u(w, x)
     \Biggr) \\ 
S_{p}(a, b; u, n) & \defequals 
     \sum_{0 \leq m < n} (am+b)^{p} u^{m} \\ 
     & \phantom{:} = 
     \frac{u^{n} - 1}{u-1} + 
     [w^{n-1}] [z^{p-1} x^{0}] \Biggl( 
     \ConvGF{p}{-1}{p-1}{\frac{z}{x}} 
     \widetilde{B}_{a,b,u}(w, x)
     \Biggr) \\ 
     & \phantom{:} = 
     \frac{u^{n} - 1}{u-1} + 
     [w^{n-1}] [x^0 z^{p-1}] \Biggl( 
     \ConvGF{p}{1}{1}{\frac{z}{x}} 
     \widetilde{B}_{a,b,u}(w, x)
     \Biggr). 
\end{align*} 

\begin{remark}[Relations to the Bernoulli and Euler Polynomials] 
For fixed $n \geq 0$, integers $a,b$, and some $u \neq 0$, 
exponential generating functions for the 
generalized sums, $S_p(a,b; u, n+1)$, with respect to $p$ 
are given by the following sums \cite[\S 7.6]{GKP}: 
\begin{align*} 
\frac{S_p(a,b; u, n+1)}{p!} & = [z^p] \left( 
     \sum_{0 \leq k \leq n} e^{(ak+b) z} u^{k}
     \right) = 
     [z^p] \left( 
     e^{bz} \times \frac{e^{a(n+1) z} u^{n+1} -1}{u e^{az} - 1} 
     \right). 
\end{align*} 
The bivariate, two-variable exponential generating functions, 
$\widetilde{B}_{u}(w, x)$ and $\widetilde{B}_{a,b,u}(w, x)$, 
involved in enumerating the respective sequences in each of 
\eqref{eqn_Spn_Conv_and_BNumGF_ident_ex-stmt_v1} and 
\eqref{eqn_Spn_Conv_and_BNumGF_ident_ex-stmt_v2} are related to the 
generating functions for the 
\emph{Bernoulli} and \emph{Euler polynomials}, $B_n(x)$ and $E_n(x)$, 
defined in the references when the parameter $u \defmapsto \pm 1$ 
(\cite[\S 24.2]{NISTHB}, \cite[\S 4.2.2, \S 4.2.3]{UC}). 
For $u \defequals \pm 1$, the sums defined by the left-hand-sides of the 
previous two equations in \eqref{eqn_Spn_Conv_and_BNumGF_ident_ex-stmt_v2} 
also correspond to special cases of the following 
known identities involving these polynomial sequences 
\cite[\S 6.5]{GKP} \cite[\S 24.4(iii)]{NISTHB}: 
\begin{align*} 
\tagtext{Sums of Powers Formulas} 
\sum_{m=0}^{n} (am+b)^{p} & = 
     \frac{a^{p}}{p+1} \left( 
     B_{p+1}\left(n+1+\frac{b}{a}\right) - 
     B_{p+1}\left(\frac{b}{a}\right) 
     \right) \\ 
\sum_{m=0}^{n} (-1)^{m} (am+b)^{p} & = 
     \frac{a^{p}}{2} \left( 
     (-1)^{n} \cdot E_{p}\left(n+1+\frac{b}{a}\right) + 
     E_{p}\left(\frac{b}{a}\right) 
     \right). 
\end{align*} 
The results in the previous equations are also compared to the 
forms of other well-known sequence generating functions 
involving the \emph{Bernoulli numbers}, $B_n$, the 
\emph{first-order Eulerian numbers}, $\gkpEI{n}{m}$, and the 
\emph{Stirling numbers of the second kind}, $\gkpSII{n}{k}$, 
in the next few cases of the established identities for these 
sequences expanded in 
Remark \ref{remark_SumsOfPowers_CompsToOtherKnownSeqGFs} 
(\cite[\cf \S 6]{GKP}, \seqnum{A027641}, \seqnum{A027642}, 
\seqnum{A008292}, \seqnum{A008277}). 
\end{remark} 

\begin{remark}[Comparisons to Other Formulas and Special Generating Functions] 
\label{remark_SumsOfPowers_CompsToOtherKnownSeqGFs}
The sequences generated by 
\eqref{eqn_Spn_Conv_and_BNumGF_ident_ex-stmt_v1} are first compared 
to the following known expansions that exactly generate these 
finite sums over $n \geq 0$ and $p \geq 1$ 
(\cite[\S 24.4(iii), \S 24.2]{NISTHB}, \cite[\S 6.5, \S 7.4]{GKP}): 
\begin{align*} 
\tagtext{Relations to the Bernoulli Numbers} 
S_p(n+1) & = 
     \frac{B_{p+1}(n+1) - B_{p+1}(0)}{p+1} \\ 
     & = 
     \sum_{s=0}^{p} \binom{p+1}{s} \frac{B_s \cdot (n+1)^{p+1-s}}{(p+1)} \\ 
\tagtext{Expansions by the Stirling Numbers} 
S_p(n+1) & = 
     \sum_{j=0}^{p} \gkpSII{p}{j} \frac{\FFactII{(n+1)}{j+1}}{(j+1)} \\ 
     & = 
     \sum_{0 \leq j,k \leq p+1} \gkpSII{p}{j} \gkpSI{j+1}{k} 
     \frac{(-1)^{j+1-k} (n+1)^{k}}{j+1} \\ 
\tagtext{Relations to Special Generating Functions} 
S_p(n+1) & = 
     [z^{n}] \left( 
     \sum_{j=0}^{p} \gkpSII{p}{j} \frac{z^{j} \cdot j!}{(1-z)^{j+2}} 
     \right) \\ 
     & = 
     [z^{n}] \left( 
     \sum_{i \geq 0} \gkpEI{p}{i} \frac{z^{i+1}}{(1-z)^{p+2}} 
     \right) \\ 
     & = 
     p! \cdot [w^{n} z^{p}] \left( 
     \frac{w \cdot e^{z}}{(1-w) (1 - w e^{z})} 
     \right). 
\end{align*} 
Two bivariate \quotetext{super} generating function for the 
\emph{first-order Eulerian numbers}, $\gkpEI{n}{m}$, 
employed in formulating variants of the last identity 
are given by the following equations 
where $\gkpEI{n}{m} = \gkpEI{n}{n-1-m}$ for all 
$n \geq 1$ and $0 \leq m < n$ by the row-wise symmetry in the triangle 
\cite[\S 7.4, \S 6.2]{GKP} \cite[\S 26.14(ii)]{NISTHB}: 
\begin{align*} 
\tagtext{First-Order Eulerian Number EGFs} 
\sum_{m,n \geq 0} \gkpEI{n}{m} \frac{w^{m} z^{n}}{n!} & = 
     \frac{1-w}{e^{(w-1) z} - w} \\ 
\sum_{m,n \geq 0} \gkpEI{m+n+1}{m} \frac{w^{m} z^{n}}{(m+n+1)!} & = 
     \frac{e^{w} - e^{z}}{w e^{z} - z e^{w}}. 
\end{align*} 
Similarly, the generalized forms of the sums generated by 
\eqref{eqn_Spn_Conv_and_BNumGF_ident_ex-stmt_v2} are related to the 
more well-known combinatorial sequence identities expanded as follows 
(\cite[\S 26.8]{NISTHB}, \cite[\S 7.4]{GKP}): 
\begin{align*} 
S_p(u, n+1) & = 
     \sum_{j=0}^{p} \gkpSII{p}{j} u^{j} \times 
     \frac{\partial^{(j)}}{{\partial u}^{(j)}} 
     \Biggl( 
     \frac{1}{1-u} - \frac{u^{n+1}}{1-u} 
     \Biggr) \\ 
   & = 
     [w^{n}] \left( 
     \sum_{j=0}^{p} \gkpSII{p}{j} \frac{(uw)^{j} \cdot j!}{(1-w) (1-uw)^{j+1}} 
     \right) \\ 
   & = 
     p! \cdot [w^{n} z^{p}] \left( 
     \frac{uw \cdot e^{z}}{(1-w) (1 - uw e^{z})} 
     \right). 
\end{align*} 
As in the examples of termwise applications of the formal 
Laplace-Borel transformations noted above, 
the role of the parameter $p$ corresponding to the forms of the 
special sequence triangles in the identities given above is 
phrased through the implicit dependence of the convergent functions on the 
fixed $p \geq 1$ in each of 
\eqref{eqn_mPowPm1_IntPows_BinomCoeffGF_exps-stmts_v2}, 
\eqref{eqn_Spn_Conv_and_BNumGF_ident_ex-stmt_v1}, and 
\eqref{eqn_Spn_Conv_and_BNumGF_ident_ex-stmt_v2}. 
\end{remark} 

\subsubsection{Semi-rational generating function constructions 
               enumerating sequences of binomials} 

A second motivating application highlighting the procedure outlined 
in the examples above 
expands the binomial power sequences, $2^{p} - 1$, for $p \geq 1$ through an 
extension of the first result given in 
\eqref{eqn_mPowPm1_IntPows_BinomCoeffGF_exps-stmts_v1} when $m \defequals 2$. 
The finite sums for the integer powers provided by the binomial theorem in 
these cases correspond to removing, or selectively peeling off, the 
$r$ uppermost-indexed terms from the 
first sum for subsequent choices of the $p \geq r \geq 1$ in the 
following forms \cite[\cf \S 2.2, \S 2.4]{PRIMEREC}: 
\begin{align*} 
m^{p} - 1 & = \sum_{i=0}^{r} \binom{p}{p+1-i} (m-1)^{i} \\ 
     & \phantom{=\ } + 
     (p-r-1)! \cdot \left( 
     p(p-1) \cdots (p-r) \times \sum_{k=0}^{p-r-1} 
     \frac{(m-1)^{k+1}}{(k+1)! (p-1-k)!} 
     \right). 
\end{align*} 
The generating function identities phrased in terms of 
\eqref{eqn_mPowPm1_IntPows_BinomCoeffGF_exps-stmts_v1} 
in the previous examples 
are then modified slightly according to this equation for the 
next few special cases of $r \geq 1$. 
For example, these sums are employed to obtain the next 
forms of the convergent-based generating function 
expansions generalizing the result in 
\eqref{eqn_mPowPm1_IntPows_BinomCoeffGF_exps-stmts_v2} above 
(\seqnum{A000225}). 
\begin{align*} 
2^{p} - 1 = 
     [z^{p-2}] [x^0] \Biggl( & 
     \frac{1}{(1-z)} + 
     \left(4 e^{2x} - 2 e^{x}\right) \times 
     \ConvGF{p}{-1}{p-2}{\frac{z}{x}} 
     \Biggr),\ p \geq 2 \\ 
     = 
     [z^{p-3}] [x^0] \Biggl( & 
     \frac{4-3z}{(1-z)^2} + 
     \left(8 e^{2x} - e^{x} \cdot (x+5) \right) \times 
     \ConvGF{p}{-1}{p-3}{\frac{z}{x}} 
     \Biggr),\ p \geq 3 \\ 
     = 
     [z^{p-4}] [x^0] \Biggl( & 
     \frac{11 - 17 z + 7 z^2}{(1-z)^3} \\ 
     & + 
     \left(16 e^{2x} - \frac{e^{x}}{2} \cdot (x^2 + 10x + 24) \right) \times 
     \ConvGF{p}{-1}{p-4}{\frac{z}{x}} 
     \Biggr),\ p \geq 4 \\ 
     = [z^{p-5}] [x^0] \Biggl( & 
     \frac{26 - 62z + 52 z^2 - 15 z^3}{(1-z)^4} \\ 
     & + 
     \frac{x^5 e^{x}}{6} (192 e^x - (x^3 + 18 x^2 + 96 x + 162)) \times 
     \ConvGF{p}{-1}{p-5}{\frac{z}{x}} 
     \Biggr), \\ 
     & \phantom{=\ } p \geq 5. 
\end{align*} 
The special cases of these generating functions for the 
$p^{th}$ powers defined above are also expanded in the next more general 
forms of these convergent function identities for $p > m \geq 1$. 
\begin{align*} 
2^p - 1 
       & = [z^{p-m-1} x^0] \Biggl( 
       \frac{\widetilde{\ell}_{m,2}(z)}{(1-z)^{m}} + 
       \left(2^{m+1} \cdot e^{2x} - 
       \frac{e^{x}}{(m-1)!} \cdot \widetilde{p}_{m,2}(x) \right) \times \\ 
       & \phantom{= [z^{p-m-1} x^0] \Biggl(\ } \times 
       \ConvGF{p}{-1}{p-m-1}{\frac{z}{x}} 
       \Biggr) \\ 
       & = [x^0 z^{p-m-1}] \left( 
       \frac{\widetilde{\ell}_{m,2}(z)}{(1-z)^{m}} + 
       \left(2^{m+1} \cdot e^{2x} - 
       \frac{e^{x}}{(m-1)!} \cdot \widetilde{p}_{m,2}(x) \right) \times 
       \ConvGF{p}{1}{1}{\frac{z}{x}} 
       \right) 
\end{align*} 
The listings provided in 
\tableref{table_ConvGF_Examples_for_PthPowerSeqs} 
cite the particular special cases of the polynomials, 
$\ell_{m,2}(z)$ and $p_{m,2}(x)$, that provide the generalizations of the 
first cases expanded in the previous equations. 
The constructions of these new identities, 
including the variations for the sequences 
formed by the binomial coefficient sums for the powers, 
$2^{p} - 2$, are motivated in the context of divisibility modulo $p$ by the 
reference (\cite[\S 8]{HARDYWRIGHTNUMT}, \seqnum{A000918}). 

Further cases of the more general $p^{th}$ power sequences 
of the form $(s+1)^{p} - 1$ for any fixed $s > 0$ are enumerated 
similarly through the next formulas. 
\begin{align*} 
(s+1)^p - 1 = 
       [z^{p-m-1} x^0] \Biggl( 
       \frac{s^2 \ell_{m,s+1}(z)}{(1-sz)^{m}} - & 
       \left(e^{x} - (s+1)^{m+1} e^{(s+1) x} + 
       \frac{s^2 e^{sx}}{(m-1)!} p_{m,s+1}(sx) \right) \times \\ 
       & \phantom{\Biggl( } \times 
       \ConvGF{p}{-1}{p-m-1}{\frac{z}{x}} 
       \Biggr),\ 
       p > m \geq 1 \\ 
       = 
       [x^0 z^{p-m-1}] \Biggl( 
       \frac{s^2 \ell_{m,s+1}(z)}{(1-sz)^{m}} - & 
       \left(e^{x} - (s+1)^{m+1} e^{(s+1) x} + 
       \frac{s^2 e^{sx}}{(m-1)!} p_{m,s+1}(sx) \right) \times \\ 
       & \phantom{\Biggl( } \times 
       \ConvGF{p}{1}{1}{\frac{z}{x}} 
       \Biggr),\ 
       p > m \geq 1 
\end{align*} 
The second set of listings provided in 
\tableref{table_ConvGF_Examples_for_PthPowerSeqs} 
expand several additional special cases corresponding to the 
polynomial sequences, $\ell_{m,s+1}(z)$ and $p_{m,s+1}(x)$, 
required to generate the more general cases of these 
particular $p^{th}$ power sequences when $p > m \geq 1$ 
(\seqnum{A000225}, \seqnum{A024023}, \seqnum{A024036}, \seqnum{A024049}). 
Related expansions of the \emph{sequences of binomials} of the forms 
$a^{n} \pm 1$ and $a^{n} \pm b^{n}$ are considered in the references 
\cite[\cf \S 2.2, \S 2.4]{PRIMEREC}. 

\section{Conclusions} 
\label{Section_ConcludingRemarks} 

We have defined several new forms of 
ordinary power series approximations to the 
typically divergent ordinary generating functions of 
generalized multiple, or $\alpha$-factorial, function sequences. 
The generalized forms of these convergent functions 
provide partial truncated approximations to the sequence generating 
functions which enumerate the factorial products generated by 
these divergent formal power series. 
The exponential generating functions for the 
special case product sequences, $p_n(\alpha, s-1)$, 
are studied in the reference \cite[\S 5]{MULTIFACTJIS}. 
The exponential generating functions that enumerate the 
cases corresponding to the more general factorial-like sequences, 
$p_n(\alpha, \beta n + \gamma)$, are less obvious in form. 
We have also suggested a number of new, alternate 
approaches to generating the factorial function sequences that arise 
in applications, including 
classical identities involving the single and double factorial functions, and 
in the forms of several other noteworthy special cases. 

The key ingredient to the short proof given in 
Section \ref{Section_Proofs_of_the_GenCFracReps} 
employs known characterizations of the 
Pochhammer symbols, $\Pochhammer{x}{n}$, by 
generalized Stirling number triangles as 
polynomial expansions in the indeterminate, $x$, 
each with predictably small finite-integral-degree at any fixed $n$. 
The more combinatorial proof in the spirit of Flajolet's articles 
suggested by the discussions in 
Section \ref{subSection_GenCFrac_Reps_for_GenFactFns} 
may lead to further interesting interpretations of the 
$\alpha$-factorial functions, $(s-1)!_{(\alpha)}$, 
which motivate the investigations of the coefficient-wise 
symbolic polynomial expansions of the functions first considered in the 
article \cite{MULTIFACTJIS}. 
A separate proof of the expansions of these new continued fractions 
formulated in terms of the 
generalized $\alpha$-factorial function coefficients defined by 
\eqref{eqn_Fa_rdef}, and by their strikingly Stirling-number-like 
combinatorial properties motivated in the introduction, 
is notably missing from this article. 

The rationality of these convergent functions for all $h$ 
suggests new insight to generating numeric sequences of interest, 
including several specific new congruence properties, derivations of 
finite difference equations that hold for these exact sequences 
modulo any integers $p$, and perhaps more interestingly, 
exact expansions of the classical single and double factorial functions 
by the special zeros of the generalized Laguerre polynomials and 
confluent hypergeometric functions. 
The techniques behind the specific identities given here 
are easily generalized and extended to other specific applications. 
The particular examples cited within this article 
are intended as suggestions at new 
starting points to tackling the expansions that arise in 
many other practical situations, both implicitly and explicitly 
involving the generalized variants of the factorial-function-like 
product sequences, $p_n(\alpha, R)$. 

\section{Acknowledgments} 
\label{Section_Acks} 

The original work on the article started as an extension of the 
forms of the generalized factorial function variants considered in my article 
published in the \emph{Journal of Integer Sequences} in 2010. 
The research on the continued fraction representations for the 
generalized factorial functions considered in this article 
began as the topic for my final project in the 
\emph{Introduction to Mathematical Research} course at the 
University of Illinois at Urbana-Champaign 
around the time of the first publication. 
Thanks especially to Professor Bruce Reznick at the 
University of Illinois at Urbana-Champaign, and 
also to Professor Jimmy McLaughlin, 
for their helpful input on revising previous drafts of the article. 

\section{Appendix: Tables}

\label{Section_appendix_StartOfTableData} 
\label{page_StartOfTableData} 

\begin{table}[H] 

\smaller 
\centering 

\begin{subtable}{\subtablewidth} 
\centering 

\begin{tabular}{|c|l|} \hline 
\hline\tabletopstrut  
$h$ & $\ConvFP{h}{\alpha}{R}{z}$ \tablebottomstrut \\ \hline 
1 & $1$ \\ 
2 & $1 - (2\alpha+R)z$ \\ 
3 & $1 - (6\alpha+2R)z + (6\alpha^2+4\alpha R+R^2)z^2$ \\ 
4 & $1 - (12\alpha+3R)z + (36\alpha^2+19\alpha R+3R^2)z^2 - 
           (24\alpha^3 + 18\alpha^2R + 7\alpha R^2 + R^3) z^3$ \\ 
5 & $1 - (20 \alpha + 4 R) z + (120 \alpha^2 + 51 \alpha R + 6 R^2) z^2 - 
     (240 \alpha^3 + 158 \alpha^2 R + 42 \alpha R^2 + 4 R^3) z^3$ \\ 
  & $\phantom{1} + 
     (120 \alpha^4 + 96 \alpha^3 R + 46 \alpha^2 R^2 + 11 \alpha R^3 + R^4) z^4$ \\ 
\hline 
\end{tabular} 
\caption{The convergent numerator functions, $\ConvFP{h}{\alpha}{R}{z}$}

\begin{tabular}{|c|l|} \hline 
\hline\tabletopstrut 
$h$ & $\ConvFQ{h}{\alpha}{R}{z}$ \\ \hline 
0 & $1$ \\ 
1 & $1-Rz$ \\ 
2 & $1 - 2 (\alpha + R) z + R (\alpha + R) z^2$ \\ 
3 & $1 - 3 (2 \alpha + R) z + 3 (\alpha + R) (2 \alpha + R) z^2 - 
      R (\alpha + R) (2 \alpha + R) z^3$ \\ 
4 & $1 - 4 (3 \alpha + R) z + 6 (2 \alpha + R) (3 \alpha + R) z^2 - 
       4 (\alpha + R) (2 \alpha + R) (3 \alpha + R) z^3$ \\ 
  & $\phantom{1 } + 
     R (\alpha + R) (2 \alpha + R) (3 \alpha + R) z^4$ \\ 
5 & $1 - 5 (4 \alpha + R) z + 10 (3 \alpha + R) (4 \alpha + R) z^2 - 
     10 (2 \alpha + R) (3 \alpha + R) (4 \alpha + R) z^3$ \\ 
  & $\phantom{1 } + 
     5 (\alpha + R) (2 \alpha + R) (3 \alpha + R) (4 \alpha + R) z^4 - 
     R (\alpha + R) (2 \alpha + R) (3 \alpha + R) (4 \alpha + R) z^5$ \\ 
\hline 
\end{tabular} 

\caption{The convergent denominator functions, $\ConvFQ{h}{\alpha}{R}{z}$}

\begin{tabular}{|c|l|l|} \hline 
\hline\tabletopstrut  
$h$ & $\ConvFP{h}{1}{1}{z}$ & $\ConvFQ{h}{1}{1}{z}$ 
\tablebottomstrut \\ \hline 
1 & $1$ & $1-z$ \\ 
2 & $1-3 z$ & $1-4 z+2 z^2$ \\ 
3 & $1-8 z+11 z^2$ & $1-9 z+18 z^2-6 z^3$ \\ 
4 & $1-15 z+58 z^2-50 z^3$ & $1-16 z+72 z^2-96 z^3+24 z^4$ \\ 
5 & $1-24 z+177 z^2-444 z^3+274 z^4$ & 
    $1-25 z+200 z^2-600 z^3+600 z^4-120 z^5$ \\
6 & $1-35 z+416 z^2-2016 z^3$ & $1-36 z+450 z^2-2400 z^3+5400 z^4$ \\ 
  & $\quad + 3708 z^4-1764 z^5$ & $\quad - 4320 z^5+720 z^6$ \\ 
\hline\hline 
\end{tabular} 
\caption{The $h^{th}$convergent generating functions, 
         $\ConvGF{h}{1}{1}{z}$, 
         generating the single factorial function, $n! = (1)_n$}

\begin{tabular}{|c|l|l|} \hline 
\hline\tabletopstrut  
$h$ & $\ConvFP{h}{2}{1}{z}$ & $\ConvFQ{h}{2}{1}{z}$ 
\tablebottomstrut \\ \hline 
1 & $1$ & $1-z$ \\ 
2 & $1-5 z$ & $1-6 z+3 z^2$ \\ 
3 & $1-14 z+33 z^2$ & $1-15 z+45 z^2-15 z^3$ \\ 
4 & $1-27 z+185 z^2-279 z^3$ & $1-28 z+210 z^2-420 z^3+105 z^4$ \\ 
5 & $1-44 z+588 z^2-2640 z^3+2895 z^4$ & 
    $1-45 z+630 z^2-3150 z^3+4725 z^4-945 z^5$ \\ 
6 & $1-65 z+1422 z^2-12558 z^3$ & 
    $1-66 z+1485 z^2-13860 z^3+51975 z^4$ \\
  & $\quad + 41685 z^4-35685 z^5$ & $\quad - 62370 z^5+10395 z^6$ \\ 
\hline\hline 
\end{tabular} 
\caption{The $h^{th}$ convergent generating functions, 
         $\ConvGF{h}{2}{1}{z}$, 
         generating the double factorial function, 
         $(2n-1)!! = 2^{n} \times \Pochhammer{\frac{1}{2}}{n}$} 

\end{subtable} 

\caption{The generalized convergent numerator and 
         denominator function sequences} 
\label{table_SpCase_Listings_Of_Phz_ConvFn} 
\label{table_SpCase_Listings_Of_Qhz_ConvFn} 
\label{table_SpCase_Listings_Of_PhzQhz_ConvFn} 

\end{table} 

\begin{table}[H] 
\smaller\centering 

\begin{subtable}{\subtablewidth} 
\centering 

\begin{tabular}{|c|l|} \hline 
\hline\tabletopstrut 
$h$ & $z^{h-1} \cdot \ConvFP{h}{\alpha}{R}{z^{-1}}$ \\ \hline 
1 & $1$ \\ 
2 & $-(2\alpha+R) + z$ \\ 
3 & $6 \alpha ^2+\alpha  (4 R-6 z)+(R-z)^2$ \\ 
4 & $-24 \alpha ^3-18 \alpha ^2 (R-2 z)-\alpha  (7 R-12 z) (R-z)-(R-z)^3$ \\ 
5 & $120 \alpha ^4+2 \alpha ^2 \left(23 R^2-79 R z+60 z^2\right) + 
     48 \alpha ^3 (2 R-5 z)+\alpha  (11 R-20 z) (R-z)^2+(R-z)^4$ \\ 
6 & $-720 \alpha ^5-2 \alpha ^3 \left(163 R^2-678 R z+600 z^2\right) - 
     \alpha ^2 \left(101 R^2-368 R z+300 z^2\right) (R-z)$ \\ 
   & $\phantom{-720 \alpha ^5} - 
      600 \alpha ^4 (R-3 z)-2 \alpha  (8R-15 z) (R-z)^3-(R-z)^5$ \\ 
7  & $5040 \alpha ^6+36 \alpha ^4 \left(71 R^2-347 R z+350 z^2\right) + 
      \alpha ^2 \left(197 R^2-740 R z+630 z^2\right) (R-z)^2$ \\ 
   & $\phantom{5040 \alpha ^6} + 
      \alpha ^3 \left(932 R^3-5102 R^2z+8322 R z^2-4200 z^3\right) + 
      2160 \alpha ^5 (2 R-7 z)$ \\ 
   & $\phantom{5040 \alpha ^6} + 
      2 \alpha  (11 R-21 z) (R-z)^4 + (R-z)^6$ \\ 
8  & $-40320 \alpha ^7-36 \alpha ^5 \left(617 R^2-3466 R z+3920 z^2\right) - 
      \alpha ^2 \left(351 R^2-1342 R z+1176 z^2\right) (R-z)^3$ \\ 
   & $\phantom{-40320 \alpha ^7} + 
      \alpha ^4 \left(-9080 R^3+57286 R^2 z-105144 R z^2+58800 z^3\right)$ \\ 
   & $\phantom{-40320 \alpha ^7} - 
       \alpha ^3 \left(2311 R^3-13040 R^2 z+22210 R z^2-11760 z^3\right) 
       (R-z)-35280 \alpha ^6 (R-4 z)$ \\ 
   & $\phantom{-40320 \alpha ^7} - 
      \alpha (29 R-56 z) (R-z)^5-(R-z)^7$ \\ \hline 
\hline 
\end{tabular} 
\subcaption{The reflected numerator polynomials, 
          $\widetilde{\FP}_h(\alpha, R; z) \defequals z^{h-1} \cdot 
          \ConvFP{h}{\alpha}{R}{z^{-1}}$}

\begin{tabular}{|c|l|} \hline 
\hline\tabletopstrut 
$h$ & $z^{h-1} \cdot \ConvFP{h}{\alpha}{z-w}{z^{-1}}$ \\ \hline 
2 & $w-2 \alpha$ \\ 
3 & $6 \alpha ^2+w^2-4 \alpha  w-2 \alpha  z$ \\ 
4 & $-24 \alpha ^3+w^3-7 \alpha  w^2+w \left(18 \alpha ^2- 
    5 \alpha  z\right)+18 \alpha ^2 z$ \\ 
5 & $120 \alpha ^4+w^4-11 \alpha  w^3+ 
     w^2 \left(46 \alpha ^2-9 \alpha  z\right)+ 
     w \left(66 \alpha ^2 z-96 \alpha ^3\right)+8 \alpha ^2 z^2- 
     144 \alpha ^3 z$ \\ 
6 & $-720 \alpha ^5+w^5-16 \alpha  w^4+ 
     w^3 \left(101 \alpha ^2-14 \alpha  z\right)+ 
     w^2 \left(166 \alpha ^2 z-326 \alpha ^3\right)$ \\ 
  & $\qquad + 
     w \left(600 \alpha ^4+33 \alpha ^2 z^2-704 \alpha ^3 z\right)- 
     170 \alpha ^3 z^2+1200 \alpha ^4 z$ \\ 
7 & $5040 \alpha ^6+w^6-22 \alpha  w^5+ 
     w^4 \left(197 \alpha ^2-20 \alpha  z\right)+ 
     w^3 \left(346 \alpha ^2 z-932 \alpha ^3\right)$ \\ 
  & $\qquad + 
     w^2 \left(2556 \alpha ^4+87 \alpha ^2 z^2-2306 \alpha ^3 z\right)$ \\ 
  & $\qquad + 
     w \left(-4320 \alpha ^5-914 \alpha ^3 z^2+7380 \alpha ^4 z\right)- 
     48 \alpha ^3 z^3+2664 \alpha ^4 z^2-10800 \alpha ^5 z$ \\ 
8 & $-40320 \alpha ^7+w^7-29 \alpha  w^6+ 
     w^5 \left(351 \alpha ^2-27 \alpha  z\right)+ 
     w^4 \left(640 \alpha ^2 z-2311 \alpha ^3\right)$ \\ 
  & $\qquad + 
     w^3 \left(9080 \alpha ^4+185 \alpha ^2 z^2-6107 \alpha ^3 z\right)+ 
     w^2 \left(-22212 \alpha ^5-3063 \alpha ^3 z^2+30046 \alpha ^4 z\right)$ \\ 
  & $\qquad + 
     w \left(35280 \alpha ^6-279 \alpha ^3 z^3+17812 \alpha ^4 z^2- 
     80352 \alpha ^5 z\right)$ \\ 
  & $\qquad + 
     1862 \alpha ^4 z^3-38556 \alpha ^5 z^2+105840 \alpha ^6 z$ \\ \hline 
\hline 
$h$ & $z^{h-1} \cdot \ConvFP{h}{-\alpha}{z-w}{z^{-1}}$ \\ \hline 
3 & $6 \alpha ^2+w^2+\alpha  (4 w+2 z)$ \\ 
4 & $24 \alpha ^3+w^3+\alpha  \left(7 w^2+5 w z\right)+ 
     \alpha ^2 (18 w+18 z)$ \\ 
5 & $120 \alpha ^4+w^4+\alpha ^2 \left(46 w^2+66 w z+8 z^2\right)+ 
     \alpha  \left(11 w^3+9 w^2 z\right)+\alpha ^3 (96 w+144 z)$ \\ 
6 & $720 \alpha ^5+w^5+\alpha ^3 \left(326 w^2+704 w z+170 z^2\right)+ 
     \alpha \left(16 w^4+14 w^3 z\right)$ \\ 
  & $\qquad + 
     \alpha ^2 \left(101 w^3+166 w^2 z+33 w z^2\right)+ 
     \alpha ^4 (600 w+1200 z)$ \\ 
7 & $5040 \alpha ^6+w^6+ 
     \alpha ^4 \left(2556 w^2+7380 w z+2664 z^2\right)+ 
     \alpha  \left(22 w^5+20 w^4 z\right)$ \\ 
  & $\qquad + 
     \alpha ^3 \left(932 w^3+2306 w^2 z+914 w z^2+48 z^3\right)+ 
     \alpha ^2 \left(197 w^4+346 w^3 z+87 w^2 z^2\right)$ \\ 
  & $\qquad + 
     \alpha ^5 (4320 w+10800 z)$ \\ 
8 & $40320 \alpha ^7+w^7+ 
     \alpha ^5 \left(22212 w^2+80352 w z+38556 z^2\right)+ 
     \alpha  \left(29 w^6+27 w^5 z\right)$ \\ 
  & $\qquad + 
     \alpha ^4 \left(9080 w^3+30046 w^2 z+17812 w z^2+1862 z^3\right)+ 
     \alpha ^2 \left(351 w^5+640 w^4 z+185 w^3 z^2\right)$ \\ 
  & $\qquad + 
     \alpha ^3 \left(2311 w^4+6107 w^3 z+3063 w^2 z^2+279 w z^3\right)+ 
     \alpha ^6 (35280 w+105840 z)$ \\ \hline\hline
\end{tabular} 
\subcaption{Modified forms of the reflected numerator polynomials, 
          $\widetilde{\FP}_h(\pm \alpha, z-w; z)$} 

\end{subtable} 

\caption{The reflected convergent numerator function sequences} 
\label{table_RelfectedConvNumPolySeqs_sp_cases} 
\end{table} 

\addtocounter{table}{1}
\setcounter{subtable}{0} 

\begin{sidewaystable}
\centering 
\smaller 

\resizebox{0.95\textwidth}{!}{
\begin{tabular}{|l|l|lcc|lcc|lcc|lcc|} \hline 
\hline\tabletopstrut 
$n$ & $\MultiFactorial{n}{2}$ & 
$\widetilde{R}_2^{(2)}(n)$ & $\pod{2}$ & $\pod{4}$ &
$\widetilde{R}_3^{(2)}(n)$ & $\pod{3}$ & $\pod{6}$ &
$\widetilde{R}_4^{(2)}(n)$ & $\pod{4}$ & $\pod{8}$ &
$\widetilde{R}_5^{(2)}(n)$ & $\pod{5}$ & $\pod{10}$ \\ \hline 
 0 & 1 & 1 & 1 & 1 & 1 & 1 & 1 & 1 & 1 & 1 & 1 & 1 & 1 \\
 1 & 1 & 1 & 1 & 1 & 1 & 1 & 1 & 1 & 1 & 1 & 1 & 1 & 1 \\
 2 & 2 & 2 & 0 & 2 & 2 & 2 & 2 & 2 & 2 & 2 & 2 & 2 & 2 \\
 3 & 3 & 3 & 1 & 3 & 3 & 0 & 3 & 3 & 3 & 3 & 3 & 3 & 3 \\
 4 & 8 & 8 & 0 & 0 & 8 & 2 & 2 & 8 & 0 & 0 & 8 & 3 & 8 \\
 5 & 15 & 15 & 1 & 3 & 15 & 0 & 3 & 15 & 3 & 7 & 15 & 0 & 5 \\
 6 & 48 & 48 & 0 & 0 & 48 & 0 & 0 & 48 & 0 & 0 & 48 & 3 & 8 \\
 7 & 105 & -175 & 1 & 1 & 105 & 0 & 3 & 105 & 1 & 1 & 105 & 0 & 5 \\
 8 & 384 & 0 & 0 & 0 & 384 & 0 & 0 & 384 & 0 & 0 & 384 & 4 & 4 \\
 9 & 945 & -13671 & 1 & 1 & 945 & 0 & 3 & 945 & 1 & 1 & 945 & 0 & 5 \\
 10 & 3840 & -17920 & 0 & 0 & 3840 & 0 & 0 & 3840 & 0 & 0 & 3840 & 0 & 0 \\
 11 & 10395 & -633501 & 1 & 3 & 43659 & 0 & 3 & 10395 & 3 & 3 & 10395 & 0 & 5 \\
 12 & 46080 & -960000 & 0 & 0 & 92160 & 0 & 0 & 46080 & 0 & 0 & 46080 & 0 & 0 \\
 13 & 135135 & -28498041 & 1 & 3 & 3532815 & 0 & 3 & 135135 & 3 & 7 & 135135 & 0 & 5 \\
 14 & 645120 & -45480960 & 0 & 0 & 5644800 & 0 & 0 & 645120 & 0 & 0 & 645120 & 0 & 0 \\
 15 & 2027025 & -1343937855 & 1 & 1 & 257161905 & 0 & 3 & -5386095 & 1 & 1 & 2027025 & 0 & 5 \\
 16 & 10321920 & -2202927104 & 0 & 0 & 401522688 & 0 & 0 & 0 & 0 & 0 & 10321920 & 0 & 0 \\
 17 & 34459425 & -67747539375 & 1 & 1 & 17642360385 & 0 & 3 & -1211768415 & 1 & 1 & 34459425 & 0 & 5 \\
 18 & 185794560 & -112925343744 & 0 & 0 & 27994595328 & 0 & 0 & -1634992128 & 0 & 0 & 185794560 & 0 & 0 \\
 19 & 654729075 & -3664567145437 & 1 & 3 & 1200706189875 & 0 & 3 & -141536175885 & 3 & 3 & 3315215475 & 0 & 5 \\
 20 & 3715891200 & -6182061834240 & 0 & 0 & 1941606236160 & 0 & 0 & -211558072320 & 0 & 0 & 7431782400 & 0 & 0 \\
 21 & 13749310575 & -212363430514977 & 1 & 3 & 83236453970607 & 0 & 3 & -14054409745425 & 3 & 7 & 679112772975 & 0 & 5 \\
\hline\hline
\end{tabular} 
} 

\captionof{subtable}{Congruences for the double factorial function, 
                     $n!! = \MultiFactorial{n}{2}$, 
                     modulo $h$ (and $2h$) for $h \defequals 2,3,4,5$. 
        Supplementary listings containing computational data for the 
        congruences, $n!! \equiv R_h^{(2)}(n) \pmod{2^i h}$, 
        for $2 \leq i \leq h \leq 5$ are tabulated in the 
        summary notebook reference. 
        } 

\end{sidewaystable} 

\begin{sidewaystable}
\centering 
\smaller 

\resizebox{0.95\textwidth}{!}{
\begin{tabular}{|l|l|lcc|lcc|lcc|lcc|} \hline 
\hline\tabletopstrut 
$n$ & $\MultiFactorial{n}{3}$ & 
$\widetilde{R}_2^{(3)}(n)$ & $\pod{2}$ & $\pod{6}$ &
$\widetilde{R}_3^{(3)}(n)$ & $\pod{3}$ & $\pod{9}$ &
$\widetilde{R}_4^{(3)}(n)$ & $\pod{4}$ & $\pod{12}$ &
$\widetilde{R}_5^{(3)}(n)$ & $\pod{5}$ & $\pod{15}$ \\ \hline 
 0 & 1 & 1 & 1 & 1 & 1 & 1 & 1 & 1 & 1 & 1 & 1 & 1 & 1 \\
 1 & 1 & 1 & 1 & 1 & 1 & 1 & 1 & 1 & 1 & 1 & 1 & 1 & 1 \\
 2 & 2 & 2 & 0 & 2 & 2 & 2 & 2 & 2 & 2 & 2 & 2 & 2 & 2 \\
 3 & 3 & 3 & 1 & 3 & 3 & 0 & 3 & 3 & 3 & 3 & 3 & 3 & 3 \\
 4 & 4 & 4 & 0 & 4 & 4 & 1 & 4 & 4 & 0 & 4 & 4 & 4 & 4 \\
 5 & 10 & 10 & 0 & 4 & 10 & 1 & 1 & 10 & 2 & 10 & 10 & 0 & 10 \\
 6 & 18 & 18 & 0 & 0 & 18 & 0 & 0 & 18 & 2 & 6 & 18 & 3 & 3 \\
 7 & 28 & 28 & 0 & 4 & 28 & 1 & 1 & 28 & 0 & 4 & 28 & 3 & 13 \\
 8 & 80 & 80 & 0 & 2 & 80 & 2 & 8 & 80 & 0 & 8 & 80 & 0 & 5 \\
 9 & 162 & 162 & 0 & 0 & 162 & 0 & 0 & 162 & 2 & 6 & 162 & 2 & 12 \\
 10 & 280 & -980 & 0 & 4 & 280 & 1 & 1 & 280 & 0 & 4 & 280 & 0 & 10 \\
 11 & 880 & -704 & 0 & 4 & 880 & 1 & 7 & 880 & 0 & 4 & 880 & 0 & 10 \\
 12 & 1944 & 0 & 0 & 0 & 1944 & 0 & 0 & 1944 & 0 & 0 & 1944 & 4 & 9 \\
 13 & 3640 & -92300 & 0 & 4 & 3640 & 1 & 4 & 3640 & 0 & 4 & 3640 & 0 & 10 \\
 14 & 12320 & -115192 & 0 & 2 & 12320 & 2 & 8 & 12320 & 0 & 8 & 12320 & 0 & 5 \\
 15 & 29160 & -136080 & 0 & 0 & 29160 & 0 & 0 & 29160 & 0 & 0 & 29160 & 0 & 0 \\
 16 & 58240 & -6186752 & 0 & 4 & 395200 & 1 & 1 & 58240 & 0 & 4 & 58240 & 0 & 10 \\
 17 & 209440 & -8349992 & 0 & 4 & 633556 & 1 & 1 & 209440 & 0 & 4 & 209440 & 0 & 10 \\
 18 & 524880 & -10935000 & 0 & 0 & 1049760 & 0 & 0 & 524880 & 0 & 0 & 524880 & 0 & 0 \\
 19 & 1106560 & -411766784 & 0 & 4 & 51684256 & 1 & 1 & 1106560 & 0 & 4 & 1106560 & 0 & 10 \\
 20 & 4188800 & -572266240 & 0 & 2 & 70505120 & 2 & 2 & 4188800 & 0 & 8 & 4188800 & 0 & 5 \\
 21 & 11022480 & -777084840 & 0 & 0 & 96446700 & 0 & 0 & 11022480 & 0 & 0 & 11022480 & 0 & 0 \\
 22 & 24344320 & -28922921456 & 0 & 4 & 5645314048 & 1 & 4 & -144674816 & 0 & 4 & 24344320 & 0 & 10 \\
 23 & 96342400 & -40807520000 & 0 & 4 & 7668245080 & 1 & 1 & -116486720 & 0 & 4 & 96342400 & 0 & 10 \\
 24 & 264539520 & -56458612224 & 0 & 0 & 10290587328 & 0 & 0 & 0 & 0 & 0 & 264539520 & 0 & 0 \\
 25 & 608608000 & -2177450514800 & 0 & 4 & 577086766300 & 1 & 1 & -41321139200 & 0 & 4 & 608608000 & 0 & 10 \\
 26 & 2504902400 & -3101148709984 & 0 & 2 & 793943072000 & 2 & 8 & -52040160640 & 0 & 8 & 2504902400 & 0 & 5 \\
 27 & 7142567040 & -4341229572096 & 0 & 0 & 1076206288752 & 0 & 0 & -62854589952 & 0 & 0 & 7142567040 & 0 & 0 \\
 28 & 17041024000 & -176120000000000 & 0 & 4 & 58548072721600 & 1 & 1 & -7074936915200 & 0 & 4 & 153556480000 & 0 & 10 \\
\hline\hline
\end{tabular} 
}

\captionof{subtable}{Congruences for the triple factorial function, 
                     $n!!! = \MultiFactorial{n}{3}$, 
                     modulo $h$ (and $3h$) for $h \defequals 2,3,4,5$. 
        Supplementary listings containing computational data for the 
        congruences, $n!!! \equiv R_h^{(3)}(n) \pmod{3^i h}$, 
        for $2 \leq i \leq h \leq 5$ are tabulated in the 
        summary notebook reference. 
     } 

\end{sidewaystable} 

\begin{sidewaystable}
\centering 
\smaller 

\resizebox{0.95\textwidth}{0.275\textwidth}{
\begin{tabular}{|l|l|lcc|lcc|lcc|lcc|} \hline 
\hline\tabletopstrut 
$n$ & $\MultiFactorial{n}{4}$ & 
$\widetilde{R}_2^{(4)}(n)$ & $\pod{2}$ & $\pod{8}$ &
$\widetilde{R}_3^{(4)}(n)$ & $\pod{3}$ & $\pod{12}$ &
$\widetilde{R}_4^{(4)}(n)$ & $\pod{4}$ & $\pod{16}$ &
$\widetilde{R}_5^{(4)}(n)$ & $\pod{5}$ & $\pod{20}$ \\ \hline 
 0 & 1 & 1 & 1 & 1 & 1 & 1 & 1 & 1 & 1 & 1 & 1 & 1 & 1 \\
 1 & 1 & 1 & 1 & 1 & 1 & 1 & 1 & 1 & 1 & 1 & 1 & 1 & 1 \\
 2 & 2 & 2 & 0 & 2 & 2 & 2 & 2 & 2 & 2 & 2 & 2 & 2 & 2 \\
 3 & 3 & 3 & 1 & 3 & 3 & 0 & 3 & 3 & 3 & 3 & 3 & 3 & 3 \\
 4 & 4 & 4 & 0 & 4 & 4 & 1 & 4 & 4 & 0 & 4 & 4 & 4 & 4 \\
 5 & 5 & 5 & 1 & 5 & 5 & 2 & 5 & 5 & 1 & 5 & 5 & 0 & 5 \\
 6 & 12 & 12 & 0 & 4 & 12 & 0 & 0 & 12 & 0 & 12 & 12 & 2 & 12 \\
 7 & 21 & 21 & 1 & 5 & 21 & 0 & 9 & 21 & 1 & 5 & 21 & 1 & 1 \\
 8 & 32 & 32 & 0 & 0 & 32 & 2 & 8 & 32 & 0 & 0 & 32 & 2 & 12 \\
 9 & 45 & 45 & 1 & 5 & 45 & 0 & 9 & 45 & 1 & 13 & 45 & 0 & 5 \\
 10 & 120 & 120 & 0 & 0 & 120 & 0 & 0 & 120 & 0 & 8 & 120 & 0 & 0 \\
 11 & 231 & 231 & 1 & 7 & 231 & 0 & 3 & 231 & 3 & 7 & 231 & 1 & 11 \\
 12 & 384 & 384 & 0 & 0 & 384 & 0 & 0 & 384 & 0 & 0 & 384 & 4 & 4 \\
 13 & 585 & -3159 & 1 & 1 & 585 & 0 & 9 & 585 & 1 & 9 & 585 & 0 & 5 \\
 14 & 1680 & -2800 & 0 & 0 & 1680 & 0 & 0 & 1680 & 0 & 0 & 1680 & 0 & 0 \\
 15 & 3465 & -1815 & 1 & 1 & 3465 & 0 & 9 & 3465 & 1 & 9 & 3465 & 0 & 5 \\
 16 & 6144 & 0 & 0 & 0 & 6144 & 0 & 0 & 6144 & 0 & 0 & 6144 & 4 & 4 \\
 17 & 9945 & -364871 & 1 & 1 & 9945 & 0 & 9 & 9945 & 1 & 9 & 9945 & 0 & 5 \\
 18 & 30240 & -437472 & 0 & 0 & 30240 & 0 & 0 & 30240 & 0 & 0 & 30240 & 0 & 0 \\
 19 & 65835 & -508725 & 1 & 3 & 65835 & 0 & 3 & 65835 & 3 & 11 & 65835 & 0 & 15 \\
 20 & 122880 & -573440 & 0 & 0 & 122880 & 0 & 0 & 122880 & 0 & 0 & 122880 & 0 & 0 \\
 21 & 208845 & -32086803 & 1 & 5 & 1990989 & 0 & 9 & 208845 & 1 & 13 & 208845 & 0 & 5 \\
 22 & 665280 & -40544064 & 0 & 0 & 2794176 & 0 & 0 & 665280 & 0 & 0 & 665280 & 0 & 0 \\
 23 & 1514205 & -50324483 & 1 & 5 & 4031325 & 0 & 9 & 1514205 & 1 & 13 & 1514205 & 0 & 5 \\
 24 & 2949120 & -61440000 & 0 & 0 & 5898240 & 0 & 0 & 2949120 & 0 & 0 & 2949120 & 0 & 0 \\
 25 & 5221125 & -2829930075 & 1 & 5 & 358222725 & 0 & 9 & 5221125 & 1 & 5 & 5221125 & 0 & 5 \\
 26 & 17297280 & -3647749248 & 0 & 0 & 452200320 & 0 & 0 & 17297280 & 0 & 0 & 17297280 & 0 & 0 \\
 27 & 40883535 & -4637561553 & 1 & 7 & 570989007 & 0 & 3 & 40883535 & 3 & 15 & 40883535 & 0 & 15 \\
 28 & 82575360 & -5821562880 & 0 & 0 & 722534400 & 0 & 0 & 82575360 & 0 & 0 & 82575360 & 0 & 0 \\
 29 & 151412625 & -264205859375 & 1 & 1 & 52114215825 & 0 & 9 & -1438808175 & 1 & 1 & 151412625 & 0 & 5 \\
 30 & 518918400 & -344048090880 & 0 & 0 & 65833447680 & 0 & 0 & -1378840320 & 0 & 0 & 518918400 & 0 & 0 \\
 31 & 1267389585 & -442855631151 & 1 & 1 & 82524474513 & 0 & 9 & -979895151 & 1 & 1 & 1267389585 & 0 & 5 \\
 32 & 2642411520 & -563949338624 & 0 & 0 & 102789808128 & 0 & 0 & 0 & 0 & 0 & 2642411520 & 0 & 0 \\
 33 & 4996616625 & -26469713463567 & 1 & 1 & 7078405640625 & 0 & 9 & -516689348175 & 1 & 1 & 4996616625 & 0 & 5 \\
 34 & 17643225600 & -34686740160000 & 0 & 0 & 9032888517120 & 0 & 0 & -620425428480 & 0 & 0 & 17643225600 & 0 & 0 \\
\hline\hline 
\end{tabular} 
}

\captionof{subtable}{Congruences for the 
                     quadruple factorial ($4$-factorial) function, 
                     $n!!!! = \MultiFactorial{n}{4}$, 
                     modulo $h$ (and $4h$) for $h \defequals 2,3,4,5$. 
        Supplementary listings containing computational data for the 
        congruences, $n!!!! \equiv R_h^{(4)}(n) \pmod{4^i h}$, 
        for $2 \leq i \leq h \leq 5$ are tabulated in the 
        summary notebook reference. 
     } 

\addtocounter{table}{-1} 
\smallskip\hrule
\captionof{table}{The $\alpha$-factorial functions modulo 
                  $h$ (and $h\alpha$) 
                  for $h \defequals 2,3,4,5$ defined by the special case 
                  expansions from 
                  Section \ref{subsubSection_Examples_NewCongruences} of the 
                  introduction and in 
                  Section \ref{subSection_NewCongruence_Relations_Modulo_Integer_Bases} 
                  where 
                  $\widetilde{R}_p^{(\alpha)}(n) \defequals 
                   \left[z^{\lfloor (n+\alpha-1) / \alpha \rfloor}\right] 
                   \ConvGF{p}{-\alpha}{n}{z}$. 
                 }
\label{table245}
\end{sidewaystable} 

\begin{table}[H] 
\centering 

\smaller 

\begin{subtable}{\textwidth} 
\centering 

\begin{tabular}{|c|l|l|} \hline 
\hline\tabletopstrut 
$m$ & $\ell_{m,2}(z)$ & $p_{m,2}(x)$ \\ \hline 
1 & $1$ & $1$ \\ 
2 & $4 - 3z$ & $x+4$ \\ 
3 & $11 - 17z + 7z^2$ & $x^2+10x+22$ \\ 
4 & $26 - 62z + 52z^2  - 15z^3$ & 
    $x^3+18x^2+96x+156$ \\ 
5 & $57 - 186z + 238z^2 - 139z^3 + 31z^4$ & 
    $x^4+28x^3+264x^2+1008x+1368$ \\ \hline 
\hline\tabletopstrut 
$m$ & $\ell_{m,3}(z)$ & $p_{m,3}(x)$ \\ \hline 
1 & $1$ & $1$ \\ 
2 & $5 - 8z$ & $x+5$ \\ 
3 & $18 - 60z + 52 z^2$ & $x^2+12x+36$ \\ 
4 & $58 - 300 z + 532 z^2  - 320 z^3$ & $x^3+21x^2+144x+348$ \\ 
5 & $179 - 1268 z + 3436 z^2 - 4192 z^3 + 1936 z^4$ & 
    $x^4+32x^3+372x^2+1968x+4296$ \\ \hline 
\hline\tabletopstrut 
$m$ & $\ell_{m,4}(z)$ & $p_{m,4}(x)$ \\ \hline 
1 & $1$ & $1$ \\ 
2 & $6 - 15z$ & $x+6$ \\ 
3 & $27 - 141z + 189 z^2$ & $x^2+14x+54$ \\ 
4 & $112 - 906 z + 2484 z^2  - 2295 z^3$ & 
    $x^3+24x^2+204x+672$ \\ 
5 & $453 - 4998 z + 20898 z^2 - 39123 z^3 + 27621 z^4$ & 
    $x^4+36x^3+504x^2+3504x+10872$ \\ \hline 
\hline\tabletopstrut 
$m$ & $\ell_{m,5}(z)$ & $p_{m,5}(x)$ \\ \hline 
1 & $1$ & $1$ \\ 
2 & $7 - 24z$ & $x+7$ \\ 
3 & $38 - 272z + 496 z^2$ & 
    $x^2+16x+76$ \\ 
4 & $194 - 2144 z + 7984 z^2  - 9984 z^3$ & 
    $x^3+27x^2+276x+1164$ \\ 
5 & $975 - 14640 z + 82960 z^2 - 209920 z^3 + 199936 z^4$ & 
    $x^4+40x^3+660x^2+5760x+23400$ \\ \hline 
\hline 
\end{tabular} 
\subcaption{Generating the sequences of binomials, 
     $2^{p}-1$, $3^{p}-1$, $4^{p}-1$, and $5^{p}-1$}

\begin{tabular}{|c|l|l|} \hline 
\hline\tabletopstrut 
$m$ & $\ell_{m,s+1}(z)$ & $p_{m,s+1}(x)$ \\ \hline 
1 & $1$ & $1$ \\ 
2 & $3+s (1-2 z)-s^2 z$ & $3+s (1+x)$ \\ 
3 & $6+s^4 z^2-4 s (-1+2 z)+s^3 z (-2+3 z)$ & 
    $12+8 s (1+x)+s^2 (2+2 x+x^2)$ \\ 
  & $\quad + 
     s^2 (1-7 z+3 z^2)$ & \\ 
4 & $10-s^6 z^3-10 s (-1+2 z)-s^5 z^2 (-3+4 z)$ & 
    $60+60 s (1+x)+15 s^2 (2+2 x+x^2)$ \\ 
  & $\quad + 
     5 s^2 (1-5 z+3 z^2) - 
     s^4 z (3-13 z+6 z^2)$ & 
    $\quad + 
     s^3 (6+6 x+3 x^2+x^3)$ \\ 
  & $\quad  + 
     s^3 (1-14 z+21 z^2-4 z^3)$ & \\ 
5 & $15+s^8 z^4-20 s (-1+2 z)+s^7 z^3 (-4+5 z)$ & 
    $360+480 s (1+x) + 180 s^2 (2+2 x+x^2)$ \\ 
  & $\quad + 
     5 s^2 (3-13 z+9 z^2)+s^6 z^2 (6-21 z+10 z^2)$ & 
    $\quad + 
     24 s^3 (6+6 x+3 x^2+x^3)$ \\ 
  & $\quad - 
     3 s^3 (-2+18 z-27 z^2+8 z^3)$ & 
    $\quad + 
     s^4 (24+24 x+12 x^2+4 x^3+x^4)$ \\ 
  & $\quad + 
     s^5 z (-4+33 z-44 z^2+10 z^3)$ & \\ 
  & $\quad + 
     s^4 (1-23 z+73 z^2-46 z^3+5 z^4)$ & \\ \hline 
\hline 
\end{tabular} 
\subcaption{Generating the $p^{th}$ power sequences, $(s+1)^{p}-1$} 

\end{subtable} 

\caption{Convergent-based generating function identities for the 
         binomial $p^{th}$ power sequences 
         generated by the examples in 
         Section \ref{subsubSection_Apps_Example_SumsOfPowers_Seqs}} 
\label{table_ConvGF_Examples_for_PthPowerSeqs} 

\end{table} 

\addtocounter{table}{1}
\setcounter{subtable}{0} 

\begin{sidewaystable}

\centering 
\smaller 

\resizebox{0.95\textwidth}{!}{
\begin{tabular}{|c|lll|} 
\hline\tabletopstrut 
$n$ & 
$p_{n,0}(h)$ & $p_{n,1}(h)$ & $p_{n,2}(h)$ \\ \hline 
0 & 
    1 & 0 & 0 \\ 
1 & 
    $h$ & $1$ & 0 \\ 
2 & 
    $h (h-1)^2$ & $h(h-2)$ & $h-1$ \\ 
3 & 
    $h (h-1)^2 (h-2)$ & $h(h-1)(h-3)$ & $h(h-3)$ \\ 
4 & 
    $h (h-1)^2 (h-2)^2 (h-3)$ & $h(h-1) (h-2)^2 (h-4)$ & 
    $h(h-1)(h-3)(h-4)$ \\ 
5 & 
    $h (h-1)^2 (h-2)^2 (h-3)(h-4)$ & 
    $h(h-1) (h-2)^2 (h-3)(h-5)$ & 
    $h(h-1)(h-2)(h-4)(h-5)$ \\ 
6 & 
    $h (h-1)^2 (h-2)^2 (h-3)^2 (h-4)(h-5)$ & 
    $h(h-1) (h-2)^2 (h-3)^2 (h-4)(h-6)$ & 
    $h(h-1)(h-2) (h-3)^2 (h-5)(h-6)$ \\ 
7 & $h (h-1)^2 (h-2)^2 (h-3)^2 (h-4)(h-5)(h-6)$ & 
    $h(h-1) (h-2)^2 (h-3)^2 (h-4)(h-5)(h-7)$ & 
    $h(h-1)(h-2) (h-3)^2 (h-4)(h-6)(h-7)$ \\ 
\hline\hline  
$n$ & $p_{n,3}(h)$ & $p_{n,4}(h)$ & $p_{n,5}(h)$ \\ \hline 
0 & 0 & 0 & 0 \\ 
1 & 0 & 0 & 0 \\ 
2 & 0 & 0 & 0 \\ 
3 & $h-1$ & 0 & 0 \\ 
4 & $h(h-2)(h-4)$ & $(h-1)(h-2)$ & 0 \\ 
5 & $h(h-1)(h-2)(h-4)(h-5)$ & 
    $h(h-2)(h-5)$ & $(h-1)(h-2)$ \\ 
6 & $h(h-1)(h-2)(h-4)(h-5)(h-6)$ & 
    $h(h-1)(h-3)(h-5)(h-6)$ & 
    $h(h-2)(h-3)(h-6)$ \\ 
7 & $h(h-1)(h-2)(h-3)(h-5)(h-6)(h-7)$ & 
    $h(h-1)(h-2)(h-5)(h-6)(h-7)$ & 
    $h(h-1)(h-3)(h-6)(h-7)$ \\ 
\hline\hline 
$n$ & $p_{n,6}(h)$ & $p_{n,7}(h)$ & $m_{n,h}$ \\ \hline 
0 & 0 & 0 & 1 \\ 
1 & 0 & 0 & $h-1$ \\ 
2 & 0 & 0 & $h-2$ \\ 
3 & 0 & 0 & $(h-2)(h-3)$ \\ 
4 & 0 & 0 & $(h-3)(h-4)$ \\ 
5 & 0 & 0 & $(h-3)(h-4)(h-5)$ \\ 
6 & $(h-1)(h-2)(h-3)$ & 0 & $(h-4)(h-5)(h-6)$ \\ 
7 & $h(h-2)(h-3)(h-7)$ & $(h-1)(h-2)(h-3)$ & $(h-4)(h-5)(h-6)(h-7)$ \\ 
\hline\hline 
\end{tabular} 
} 

\captionof{subtable}{The auxiliary numerator subsequences, 
         $C_{h,n}(\alpha, R) \defmapsto 
          \frac{(-\alpha)^{n} m_{n,h}}{n!} \times 
          \sum_{i=0}^{n} \binom{n}{i} p_{n,i}(h) \Pochhammer{R / \alpha}{i}$, 
          expanded by the finite-degree polynomial sequence terms 
          defined by the Stirling number sums in 
          \eqref{eqn_Chn_formula_stmts-exp_v5} of 
          Section \ref{subsubSection_Properties_Of_ConvFn_Phz-AuxNumFn_Subsequences}. 
          } 
\label{table_ConvNumFnSeqs_Chn_AlphaR_SpCaseListings-first_subtable_pageref} 

\end{sidewaystable} 

\addtocounter{table}{-1} 

\begin{table}[H] 
\centering 
\smaller 

\begin{subtable}{\textwidth} 
\centering 

\addtocounter{subtable}{1} 

\resizebox{0.95\textwidth}{!}{
\begin{tabular}{|c|l|l|} \hline 
\hline\tabletopstrut 
$n$ & $m_{h}$ & $(-1)^{n} n! \cdot m_h^{-1} \cdot C_{h,n}(\alpha, R)$ \\ \hline 
1 & $1$ & $-(h-1) (R+ h\alpha)$ \\ 
2 & $(h-2)$ & 
    $\alpha  \left(2 h^2-3 h-1\right) R+\alpha ^2 (h-1)^2 h+(h-1) R^2$ \\ 
3 & $(h-3)$ & 
    $3 \alpha  (h-2) \left(h^2-2 h-1\right) R^2+ 
     \alpha ^2 (h-2) \left(3 h^3-9 h^2+2 h-2\right) R$ \\ 
  & & 
    $\qquad + 
     \alpha ^3 (h-2)^2 (h-1)^2 h+(h-2) (h-1) R^3$ \\ 
4 & $(h-3)(h-4)$ & 
    $\alpha ^2 \left(6 h^4-36 h^3+53 h^2-9 h+22\right) R^2+2 \alpha ^3 \
     \left(2 h^5-15 h^4+36 h^3-36 h^2+19 h+6\right) R$ \\ 
  & & 
    $\qquad + 
     \alpha ^4 (h-3) \
     (h-2)^2 (h-1)^2 h+(h-2) (h-1) R^4+2 \alpha  (h-3) (h-2) (2 h+1) R^3$ \\ 
5 & $(h-3)(h-4)(h-5)$ & 
    $5 \alpha  (h-2) \left(h^2-3 h-2\right) R^4+5 \alpha ^2 \left(2 h^4-14 h^3+23 h^2-h+14\right) R^3$ \\ 
  & & 
    $\qquad + 
     5 \alpha ^3 \left(2 h^5-18 h^4+49 h^3-49 h^2+40 h+20\right) R^2$ \\ 
  & & 
    $\qquad + 
     \alpha ^4 \left(5 h^6-55 h^5+215 h^4-395 h^3+374 h^2-72 h+48\right) R$ \\ 
  & & 
    $\qquad + 
     \alpha ^5 (h-4) (h-3) (h-2)^2 (h-1)^2 h+(h-2) (h-1) R^5$ \\ 
\hline\hline\tabletopstrut 
$n$ & $m_{h}$ & $(-1)^{n} n! \cdot m_h^{-1} \cdot C_{h,n}(\alpha, R)$ \\ \hline 
1 & $1$ & $\alpha  h^2+(R-\alpha ) h-R$ \\ 
2 & $(h-2)$ & 
$\alpha ^2 h^3+2 \alpha  h^2 (R-\alpha )+h \left(\alpha ^2+R^2-3 \alpha  R\right)-R (\alpha +R)$ \\ 
3 & $(h-3)$ & $\alpha ^3 h^5+3 \alpha ^2 h^4 (R-2 \alpha )+ 
     \alpha  h^3 \left(13 \alpha ^2+3 R^2-15 \alpha  R\right)$ \\ 
  & & 
    $\qquad + 
     h^2 \left(-12 \alpha ^3+R^3-12 \alpha  R^2+20 \alpha ^2 R\right)+ 
     h \left(4 \alpha ^3-3 R^3+9 \alpha  R^2-6 \alpha ^2 R\right)$ \\ 
  & & 
    $\qquad + 
     2 R (\alpha +R) (2 \alpha +R)$ \\ 
4 & $(h-3)(h-4)$ & 
    $\alpha ^4 h^6+\alpha ^3 h^5 (4 R-9 \alpha )+ 
     \alpha ^2 h^4 \left(31 \alpha ^2+6 R^2-30 \alpha  R\right)$ \\ 
  & & 
    $\qquad + 
     \alpha  h^3 \left(-51 \alpha ^3+4 R^3-36 \alpha  R^2+72 \alpha ^2 R\right)$ \\ 
  & & 
    $\qquad + 
     h^2 \left(40 \alpha ^4+R^4-18 \alpha  R^3+53 \alpha ^2 R^2- 
     72 \alpha ^3 R\right)$ \\ 
  & & 
    $\qquad + 
     h \left(-12 \alpha ^4-3 R^4+14 \alpha  R^3-9 \alpha ^2 R^2+38 \alpha ^3 R\right)+2 R (\alpha +R) (2 \alpha +R) (3 \alpha +R)$ \\ 
5 & $(h-3)(h-4)(h-5)$ & 
    $\alpha ^5 h^7+\alpha ^4 h^6 (5 R-13 \alpha )+ 
     \alpha ^3 h^5 \left(67 \alpha ^2+10 R^2-55 \alpha  R\right)$ \\ 
  & & 
    $\qquad + 
     5 \alpha ^2 h^4 \left(-35 \alpha ^3+2 R^3-18 \alpha  R^2+ 
     43 \alpha ^2 R\right)$ \\ 
  & & 
    $\qquad + 
     \alpha h^3 \left(244 \alpha ^4+5 R^4- 
     70 \alpha  R^3+245 \alpha ^2 R^2-395 \alpha ^3 R\right)$ \\ 
  & & 
    $\qquad + 
     h^2 \left(-172 \alpha ^5+R^5-25 \alpha  R^4+115 \alpha ^2 R^3- 
     245 \alpha ^3 R^2+374 \alpha ^4 R\right)$ \\ 
 & & 
    $\qquad + 
     h \left(48 \alpha ^5- 
     3 R^5+20 \alpha  R^4-5 \alpha ^2 R^3+200 \alpha ^3 R^2- 
     72 \alpha ^4 R\right)$ \\ 
 & & 
    $\qquad + 
     2 R (\alpha +R) (2 \alpha +R) (3 \alpha +R) (4 \alpha +R)$ \\ \hline 
\end{tabular} 
} 
\subcaption{Alternate factored forms of the 
            convergent function subsequences, 
            $C_{h,n}(\alpha, R)\defequals [z^n] \ConvFP{h}{\alpha}{R}{z}$, 
            gathered with respect to powers of $R$ and $h$.} 

\resizebox{0.95\textwidth}{!}{
\begin{tabular}{|c|l|} 
\hline\tabletopstrut 
$n$ & $n! \cdot C_{h,n}(\alpha, R)$ \\ \hline 
2 & $\alpha ^2 h^4+2 \alpha  h^3 (R-2 \alpha )+h^2 \left(5 \alpha ^2+R^2-7 
     \alpha  R\right)-h (3 R-2 \alpha ) (R-\alpha )+2 R (\alpha +R)$ \\ 
3 & $-\alpha ^3 h^6-3 \alpha ^2 h^5 (R-3 \alpha )-\alpha  h^4 \left(31 
     \alpha ^2+3 R^2-24 \alpha  R\right)+h^3 \left(51 \alpha ^3-R^3+21 
     \alpha  R^2-65 \alpha ^2 R\right)$ \\ 
  & $\qquad + 
     h^2 \left(-40 \alpha ^3+6 R^3-45 
     \alpha  R^2+66 \alpha ^2 R\right)-h (R-\alpha ) \left(12 \alpha ^2+11 
     R^2-10 \alpha R\right)+6 R (\alpha +R) (2 \alpha +R)$ \\ 
4 & $\alpha ^4 h^8+4 \alpha ^3 h^7 (R-4 \alpha )+2 \alpha ^2 h^6 
     \left(53 \alpha ^2+3 R^2-29 \alpha  R\right)+2 \alpha  h^5 
     \left(-188 \alpha ^3+2 R^3-39 \alpha  R^2+165 \alpha ^2 R\right)$ \\ 
  & $\qquad + 
     h^4 \left(769 \alpha ^4+R^4-46 \alpha R^3+377 \alpha ^2 R^2-936 
     \alpha ^3 R\right)-2 h^3 \left(452 \alpha ^4+5 R^4-94 \alpha  R^3+406 
     \alpha ^2 R^2-703 \alpha ^3 R\right)$ \\ 
  & $\qquad + 
     h^2 \left(564 \alpha ^4+35 R^4-302 \alpha R^3+721 \alpha ^2 R^2- 
     1118 \alpha ^3 R\right)-2 h (R-\alpha ) \left(-72 \alpha ^3+25 R^3-17 
     \alpha  R^2+114 \alpha ^2 R\right)$ \\ 
  & $\qquad + 
     24 R (\alpha +R) (2 \alpha +R) (3 \alpha +R)$ \\ 
5 & $-\alpha ^5 h^{10}-5 \alpha ^4 h^9 (R-5 \alpha )-5 \alpha ^3 h^8 
     \left(54 \alpha ^2+2 R^2-23 \alpha  R\right)-10 \alpha ^2 h^7 
     \left(-165 \alpha ^3+R^3-21 \alpha  R^2+111 \alpha ^2 R\right)$ \\ 
  & $\qquad - 
     \alpha  h^6 \left(6273 \alpha ^4+5 R^4-190 \alpha  R^3+1795 
     \alpha ^2 R^2-5860 \alpha ^3 R\right)$ \\ 
  & $\qquad + 
     h^5 \left(15345 \alpha ^5-R^5+85 \alpha  R^4-1425 \alpha ^2 R^3+ 
     8015 \alpha ^3 R^2-18519 \alpha ^4 R\right)$ \\ 
  & $\qquad + 
     5 h^4 \left(-4816 \alpha ^5+3 R^5-111 \alpha  R^4+1055 \alpha ^2 R^3- 
     4011 \alpha ^3 R^2+7205 \alpha ^4 R\right)$ \\ 
  & $\qquad - 
     5 h^3 \left(-4660 \alpha ^5+17 R^5-339 \alpha  R^4+1947 \alpha ^2 R^3- 
     5703 \alpha ^3 R^2+8438 \alpha ^4 R\right)$ \\ 
  & $\qquad + 
     h^2 \left(-12576 \alpha ^5+225 R^5-2200 \alpha  R^4+ 
     7975 \alpha ^2 R^3-22900 \alpha ^3 R^2+26400 \alpha ^4 R\right)$ \\ 
  & $\qquad - 
     2 h (R-\alpha ) \left(1440 \alpha ^4+137 R^4+7 \alpha  R^3+1802 \alpha ^2 R^2-1848 \alpha ^3 R\right)+120 R (\alpha +R) (2 \alpha +R) (3 \alpha +R) (4 \alpha +R)$ \\ \hline 
\hline 
\end{tabular} 
} 
\end{subtable} 

\caption{The auxiliary convergent function subsequences, 
         $C_{h,n}(\alpha, R) \defequals [z^n] \ConvFP{h}{\alpha}{R}{z}$,  
         defined in Section \ref{subsubSection_Properties_Of_ConvFn_Phz}.}
\label{table_ConvNumFnSeqs_Chn_AlphaR_SpCaseListings} 

\end{table} 

\setcounter{subtable}{0} 

\begin{table}[H] 
\centering\smaller 

\begin{subtable}{\textwidth} 
\centering 
\resizebox{0.95\textwidth}{!}{
\begin{tabular}{|c|l|} \hline 
\hline\tabletopstrut 
$k$ & $(-1)^{h-k} z^{-(h-k)} \cdot R_{h,h-k}(\alpha; z)$ \\ \hline 
1 & $1$ \\ 
2 & $-\frac{1}{2} \alpha \left(h^2-h+2\right) z+h-1$ \\ 
3 & $\frac{1}{2} (h-2) (h-1) -\frac{1}{2} \alpha (h-2) 
     \left(h^2+3\right) z+\frac{1}{24} \alpha ^2 
     \left(3 h^4-10 h^3+21 h^2-14 h+24\right) z^2$ \\ 
4 & $-\frac{1}{4} \alpha  (h-3) (h-2) \left(h^2+h+4\right) z + 
     \frac{1}{24} \alpha ^2 (h-3) \left(3 h^4-4 h^3+19 h^2-2 h+56\right) z^2$ \\ 
  & $\phantom{-\frac{1}{4}} - 
     \frac{1}{48} \alpha ^3\left(h^6-7 h^5+23 h^4-37 h^3+48 h^2-28 h+48 
     \right) z^3$ \\ 
  & $\phantom{-\frac{1}{4}} + 
      \frac{1}{6} (h-3) (h-2) (h-1)$ \\ 
5 & $-\frac{1}{12} \alpha  (h-4) (h-3) (h-2) \left(h^2+2 h+5\right) z + 
     \frac{1}{48} \alpha ^2 (h-4) (h-3) \left(3 h^4+2 h^3+23 h^2+16 h+100 
     \right) z^2$ \\ 
  & $\phantom{-\frac{1}{12}} - 
     \frac{1}{48} \alpha ^3 (h-4) 
     \left(h^6-4 h^5+14 h^4-16 h^3+61 h^2-12 h+180\right) z^3$ \\ 
  & $\phantom{-\frac{1}{12}} + 
     \frac{\alpha ^4}{5760} \left(15 h^8-180 h^7+950 h^6-2688 h^5+4775
      h^4-5340 h^3+5780 h^2-3312 h+5760\right) z^4$ \\ 
  & $\phantom{-\frac{1}{12}} + 
     \frac{1}{24} (h-4) (h-3) (h-2) (h-1)$ \\ \hline 
\hline\tabletopstrut 
$k$ & $k! (-1)^{h-k} z^{-(h-k)} \cdot R_{h,h-k}(\alpha; z)$ \\ \hline 
1 & $1$ \\ 
2 & $-\alpha h^2 z+h (\alpha  z+2)-2 (\alpha  z+1)$ \\ 
3 & $\frac{3}{4} \alpha ^2 h^4 z^2- 
     \frac{1}{2} \alpha  h^3 z (5 \alpha  z+6)+ 
     \frac{3}{4} h^2 \left(7 \alpha ^2 z^2+8 \alpha  z+4\right)$ \\ 
  & $\qquad + 
     \frac{1}{2} h \left(-7 \alpha ^2 z^2-18 \alpha z- 
     18\right)+6 \left(\alpha ^2 z^2+3 \alpha  z+1\right)$ \\ 
4 & $-\frac{1}{2} \alpha ^3 h^6 z^3 + 
     \frac{1}{2} \alpha ^2 h^5 z^2 (7 \alpha  z+6) - 
     \frac{1}{2} \alpha  h^4 z \left(23 \alpha ^2 z^2+26 \alpha  z+12\right)$ \\ 
  & $\qquad + 
     \frac{1}{2} h^3 \left(37 \alpha ^3 z^3+62 \alpha ^2 z^2+ 
     48 \alpha  z+8\right)$ \\ 
  & $\qquad + 
     h^2 \left(-24 \alpha ^3 z^3-59 \alpha ^2 z^2-30 \alpha  z-24\right)$ \\ 
  & $\qquad + 
     2 h \left(7 \alpha ^3 z^3+31 \alpha ^2 z^2+42 \alpha  z+ 
     22\right)-24 \left(\alpha ^3 z^3+7 \alpha ^2 z^2+6 \alpha  z+1\right)$ \\ 
5 & $\frac{5}{16} \alpha ^4 h^8 z^4- 
     \frac{5}{4} \alpha ^3 h^7 z^3 (3 \alpha  z+2)+ 
     \frac{5}{24} \alpha ^2 h^6 z^2 \left(95 \alpha ^2 z^2+96 \alpha z+ 
     36\right)$ \\ 
  & $\qquad - 
     \frac{1}{2} \alpha  h^5 z \left(112 \alpha ^3 z^3+150 \alpha ^2 z^2+ 
     95 \alpha  z+20\right)$ \\ 
  & $\qquad + 
     \frac{5}{48} h^4 \left(955 \alpha ^4 z^4+1728 \alpha ^3 z^3+ 
     1080 \alpha ^2 z^2+672 \alpha  z+48\right)$ \\ 
  & $\qquad - 
     \frac{5}{4} h^3 \left(89 \alpha ^4 z^4+250 \alpha ^3 z^3+ 
     242 \alpha ^2 z^2+104 \alpha  z+40\right)$ \\ 
  & $\qquad + 
     \frac{5}{12} h^2 \left(289 \alpha ^4 z^4+1536 \alpha ^3 z^3+ 
     1584 \alpha ^2 z^2+408 \alpha  z+420\right)$ \\ 
  & $\qquad + 
     h \left(-69 \alpha ^4 z^4-570 \alpha ^3 z^3-1270 \alpha ^2 z^2- 
     820 \alpha  z-250\right)$ \\ 
  & $\qquad + 
     120 \left(\alpha ^4 z^4+15 \alpha ^3 z^3+25 \alpha ^2 z^2+ 
     10 \alpha  z+1\right)$ \\ \hline 
\hline 
\end{tabular} 
} 
\end{subtable} 

\caption{The auxiliary convergent numerator function subsequences, 
         $R_{h,k}(\alpha; z) \defequals [R^k] \ConvFP{h}{\alpha}{R}{z}$, 
         defined by 
         Section \ref{subsubSection_Properties_Of_ConvFn_Phz-AuxNumFn_Subsequences}.}
\label{table_ConvNumFnSeqs_Rhk_Alphaz_SpCaseListings} 

\end{table}

\bigskip
\hrule
\bigskip

\noindent \textit{2010 Mathematics Subject Classification}: 
Primary 05A10; Secondary 05A15, 11A55, 11Y55, 11Y65, 11B65. \\ 

\noindent\textit{Keywords}: 
continued fraction, J-fraction, S-fraction, 
Pochhammer symbol, factorial function, 
multifactorial, multiple factorial, 
double factorial, superfactorial, 
rising factorial, Pochhammer k-symbol, 
Barnes G-function, hyperfactorial, subfactorial, triple factorial, 
generalized Stirling number, Stirling number of the first kind, 
confluent hypergeometric function, Laguerre polynomial, 
ordinary generating function, diagonal generating function, 
Hadamard product, divergent ordinary generating function, 
formal Laplace-Borel transform, Stirling number congruence. 

\bigskip
\hrule
\bigskip 

\noindent 
(Concerned with sequences
\seqnum{A000043}, \seqnum{A000108}, \seqnum{A000142}, \seqnum{A000165}, 
\seqnum{A000166}, \seqnum{A000178}, \seqnum{A000215}, \seqnum{A000225}, 
\seqnum{A000407}, \seqnum{A000668}, \seqnum{A000918}, \seqnum{A000978}, 
\seqnum{A000984}, \seqnum{A001008}, \seqnum{A001044}, \seqnum{A001097}, 
\seqnum{A001142}, \seqnum{A001147}, \seqnum{A001220}, \seqnum{A001348}, 
\seqnum{A001359}, \seqnum{A001448}, \seqnum{A002109}, \seqnum{A002144}, 
\seqnum{A002234}, \seqnum{A002496}, \seqnum{A002805}, \seqnum{A002981}, 
\seqnum{A002982}, \seqnum{A003422}, \seqnum{A005109}, \seqnum{A005165}, 
\seqnum{A005384}, \seqnum{A006512}, \seqnum{A006882}, \seqnum{A007406}, 
\seqnum{A007407}, \seqnum{A007408}, \seqnum{A007409}, \seqnum{A007540}, 
\seqnum{A007559}, \seqnum{A007619}, \seqnum{A007661}, \seqnum{A007662}, 
\seqnum{A007696}, \seqnum{A008275}, \seqnum{A008277}, \seqnum{A008292}, 
\seqnum{A008544}, \seqnum{A008554}, \seqnum{A009120}, \seqnum{A009445}, 
\seqnum{A010050}, \seqnum{A019434}, \seqnum{A022004}, \seqnum{A022005}, 
\seqnum{A023200}, \seqnum{A023201}, \seqnum{A023202}, \seqnum{A023203}, 
\seqnum{A024023}, \seqnum{A024036}, \seqnum{A024049}, \seqnum{A027641}, 
\seqnum{A027642}, \seqnum{A032031}, \seqnum{A033312}, \seqnum{A034176}, 
\seqnum{A046118}, \seqnum{A046124}, \seqnum{A046133}, \seqnum{A047053}, 
\seqnum{A061062}, \seqnum{A066802}, \seqnum{A077800}, \seqnum{A078303}, 
\seqnum{A080075}, \seqnum{A085157}, \seqnum{A085158}, \seqnum{A087755}, 
\seqnum{A088164}, \seqnum{A094638}, \seqnum{A100043}, \seqnum{A100089}, 
\seqnum{A100732}, \seqnum{A104344}, \seqnum{A105278}, \seqnum{A123176}, 
\seqnum{A130534}, \seqnum{A157250}, \seqnum{A166351} and \seqnum{A184877}.
) 

\bigskip
\hrule
\bigskip

\vspace*{+.1in}
\noindent
Received January 5 2016;
revised versions received  March 13 2016; April 7 2016; December 24 2016; December 29 2016.
Published in {\it Journal of Integer Sequences}, January 8 2017.

\bigskip
\hrule
\bigskip

\noindent
Return to
\htmladdnormallink{Journal of Integer Sequences home page}{http://www.cs.uwaterloo.ca/journals/JIS/}.
\vskip .1in


\begin{thebibliography}{10}

\bibitem{CONTRIB-THEORY-BARNESGFN}
V.~S. Adamchik, Contributions to the theory of the {B}arnes function, 2003, 
  \url{https://arxiv.org/abs/math/0308086}. 

\bibitem{GENWTHM-DBLHYPERSUPER-FACTFNS}
C. Aebi and G. Cairns, Generalizations of {W}ilson's theorem for
  double--, hyper--, sub--, and superfactorials, 
  {\em Amer. Math. Monthly} {\bf 122} (2015), 433--443.

\bibitem{PROPS-ZEROS-CHYPFNS80}
S.~Ahmed, Properties of the zeros of confluent hypergeometric functions, {\em
  J. Approx. Theory} {\bf 34} (1982), 335--347.

\bibitem{DBLFACTFN-COMBIDENTS-SURVEY}
D. Callan, A combinatorial survey of combinatorial identities for the double
  factorial,  2009, \url{https://arxiv.org/abs/0906.1317}. 

\bibitem{ADVCOMB}
L.~Comtet, {\em Advanced Combinatorics: The Art of Finite and Infinite
  Expressions}, Reidel Publishing Company, 1974.

\bibitem{ON-HYPGEOMFNS-PHKSYMBOL}
R.~Diaz and E.~Pariguan, On hypergeometric functions and $k$--{P}ochhammer
  symbol, 2005, \url{https://arxiv.org/abs/math/0405596}. 

\bibitem{ACOMB-BOOK}
P.~Flajolet and R.~Sedgewick, {\em Analytic Combinatorics}, Cambridge
  University Press, 2010.

\bibitem{FLAJOLET80B}
P. Flajolet, Combinatorial aspects of continued fractions, {\em Discrete
  Math.} {\bf 32} (1980), 125--161.

\bibitem{FLAJOLET82}
P. Flajolet, On congruences and continued fractions for some classical
  combinatorial quantities, {\em Discrete Math.} {\bf 41} (1982),
  145--153.

\bibitem{LGWORKS-ASYMP-SPFNZEROS2008}
W.~Gautschi and C.~Giordano, {L}uigi {G}atteschi's work on asymptotics of
  special functions and their zeros, {\em Numer. Algorithms} {\bf 49}
  (2008), 11--31.

\bibitem{MAA-FUN-WITH-DBLFACT}
H. Gould and J. Quaintance, Double fun with double factorials, {\em
  Math. Mag.} {\bf 85} (2012), 177--192.

\bibitem{GKP}
R.~L. Graham, D.~E. Knuth, and O.~Patashnik, {\em Concrete Mathematics: A
  Foundation for Computer Science}, Addison-Wesley, 1994.

\bibitem{HARDYWRIGHTNUMT}
G.~H. Hardy and E.~M. Wright, {\em An Introduction to the Theory of
  Numbers}, Oxford University Press, 2008.

\bibitem{CVLPOLYS}
D.~E. Knuth, Convolution polynomials, {\em Math J} {\bf 2}
  (1992), 67--78.

\bibitem{TAOCPV1}
D.~E. Knuth, {\em The Art of Computer Programming: Fundamental Algorithms},
  Vol.~1, Addison-Wesley, 1997.

\bibitem{GFLECT}
S.~K. Lando, {\em Lectures on Generating Functions}, American Mathematical
  Society, 2002.

\bibitem{NISTHB}
F. W.~J. Olver, D.~W. Lozier, R.~F. Boisvert, and C.~W. Clark,
  {\em {NIST} Handbook of Mathematical Functions}, Cambridge
  University Press, 2010.

\bibitem{PRIMEREC}
P.~Ribenboim, {\em The New Book of Prime Number Records}, Springer, 1996.

\bibitem{UC}
S.~Roman, {\em The Umbral Calculus}, Dover, 1984.

\bibitem{MULTIFACTJIS}
M.~D. Schmidt, Generalized $j$--factorial functions, polynomials, and
  applications, {\em J. Integer Seq.} {\bf 13} (2010).

\bibitem{SUMMARYNBREF-STUB}
M.~D. Schmidt, Mathematica summary notebook, 
supplementary reference, and computational documentation, 2016,  \\
{\small\url{\TheSummaryNBFileGoogleDriveLink}}. 

\bibitem{OEIS}
N.~J.~A. Sloane, The {O}nline {E}ncyclopedia of {I}nteger {S}equences, 2010, 
\url{http://oeis.org}. 

\bibitem{ATLASOFFUNCTIONS}
J. Spanier and K.~B. Oldham, {\em An Atlas of Functions}, Taylor \&
  Francis, 1987.

\bibitem{ECV2}
R.~P. Stanley, {\em Enumerative Combinatorics}, Vol.~2, 
Cambridge University Press, 1999.

\bibitem{GFOLOGY}
H.~S. Wilf, {\em Generatingfunctionology}, Academic Press, 1994.

\bibitem{WOLFRAMFNSSITE-INTRO-FACTBINOMS}
{W}olfram Functions~Site, 
Introduction to the factorials and binomials, 2016, 
{\footnotesize\url{http://functions.wolfram.com/GammaBetaErf/Factorial/introductions/FactorialBinomials/05/}}.


\end{thebibliography}
\end{document}